\def\p{{\mathbb P}}
\def\e{{\mathbb E}}
\newtheorem{theorem}{Theorem}[section] \newtheorem{lemma}[theorem]{Lemma}
\newtheorem{proposition}[theorem]{Proposition}
\newtheorem{conjecture}[theorem]{Conjecture}
\newtheorem{remark}[theorem]{Remark}
\newcommand{\dega}[2]{\deg_{V_{#1}}(#2)}
\newcommand{\degb}[3]{\deg_{V_{#1}}(#2,#3)}
\newcommand{\degc}[2]{d_{#1}(#2)}
\newcommand{\V}[1]{V_{#1}}
\newcommand{\gV}[1]{\mathbf{V}^{#1}}
\newcommand{\pd}[3]{p_{#1}(\V{#2},\V{#3})}
\newcommand{\wda}[3]{W_{#1}(#2,#3)}
\newcommand{\wdb}[2]{W_{#1}(#2)}
\numberwithin{equation}{section}
\newcommand{\subjclass}[2][1991]{%
  \let\@oldtitle\@title%
  \gdef\@title{\@oldtitle\footnotetext{#1 \emph{Mathematics subject classification.} #2}}%
}
\newcommand{\keywords}[1]{%
  \let\@@oldtitle\@title%
  \gdef\@title{\@@oldtitle\footnotetext{\emph{Key words and phrases.} #1.}}%
}
\title{Structural Properties of the Geometric Preferential Attachment Model}
\author{Chenxu Feng\\Peking University\and Yifan Li\\New York University}
\subjclass[2020]{05C82,60D05}
\keywords{Geometric preferential attachment; Random graph; Structural property}
\begin{document}

\maketitle

\begin{abstract}
    This paper analyzes key properties of networks generated by geometric preferential attachment. We establish that the expected number of triangles is proportional to that of the standard preferential attachment model, with a proportionality constant equal to the ratio of the number of triangles between a random geometric graph and an Erd\H{o}s-R\'enyi graph. Furthermore, we prove that the maximum degree grows polynomially with the network size, sharing the same exponent as the standard model; however, the spatial constraint induces a slower growth rate in the network's early evolution. Finally, we extend prior results on connectivity and diameter to the case of networks with finite out-degrees.
\end{abstract}
\section{Introduction}

The formation mechanisms of real-world complex networks—such as the Internet, the World Wide Web, and social networks—have been a subject of intense study. Seminal work, including \cite{Albert1999,Faloutsos1999,Jeong2000}, established that these networks commonly exhibit scale-free degree distributions and small-world properties. The preferential attachment model (PAM) \cite{BARABASI1999173} provides an elegant generative explanation for these two phenomena. However, the classical PAM fails to capture other ubiquitous features of real networks, notably strong clustering (\cite{Watts1998}) and significant community structures (\cite{Girvan2002}). This shortcoming has motivated a wealth of model variants that incorporate additional mechanisms, though a unified model that perfectly captures all these features remains elusive.

In this paper, we focus on a geometric preferential attachment model (GPM) which was first introduced by Flaxman, Frieze, and Vera in \cite{Flaxman01012006} and \cite{Flaxman01012007}. In this model, each vertex is added sequentially uniformly on a $d$-dimensional spherical surface $S$, and the probability that the new vertex is connected to a previous vertex is based on the distance between them and the degree of the previous vertex. Combining the preferential attachment model with additional spatial structure, the GPM exhibits higher clustering coefficient, which is closer to the real world network. These characteristics of the GPM partially account for the missing properties in the PAM.

The GPM and its variants have been studied in recent decades. \cite{Zuev2015} constructed the GPM in hyperbolic space and compared it with the Internet network. \cite{Jordan2013} extended the power law of the degree sequence to the GPM in non-uniform metric space. \cite{Jacob2015} studied a slightly different model of the GPM. In their model, a new vertex connects to vertices uniformly and randomly selected from a candidate set, where an old vertex is included depending on its distance to the new vertex and its degree. They proved that the in-degree distribution follows a power law, while the out-degree of any node is bounded by a logarithmic function in their model.

However, other important parameters such as the maximum degree and diameter of the graph have not been fully studied. Moreover, although \cite{Flaxman01012006} and \cite{Flaxman01012007} obtained estimates for the connectivity threshold and diameter when the out-degree of each vertex is $\Omega(\log(n))$, establishing those estimates in the constant degree regime remains largely open. In this paper, we focus on the case where the out-degree is constant and provide estimates for some basic structural properties.

\subsection{Definition of the model}\label{model and result}
We first provide a rigorous definition of GPM. Our model is similar to the fitness model in \cite{Flaxman01012006}. Let $S$ be a $d$-dimensional spherical surface $S^d$ with a surface area of 1, and $D(x,y)$ be the Euclidean distance on $S$. Let $p\in (0,1)$ be a parameter and $r$ be the detection radius such that the area of the detection region $B(x,r)\cap S$ is $p$ for all $x\in S$.

The process begins with an empty graph $GPM_0$. To form $GPM_{n}$ from $GPM_{n-1}$, a new vertex $\V n$ is placed uniformly on $S$, resulting in $GPM_{n,0}$. Then $m$ edges $e_{n,1}$, $e_{n,2} \dots e_{n,m}$ are added sequentially. Let $GPM_{n,i}$ denote the graph after adding edges $e_{n,1}$, $e_{n,2} \dots, e_{n,i}$. The edge $e_{n,i+1}$ is then added with probability
\begin{align}
    \p(e_{n,i+1}=(V_n,V_k)\mid GPM_{n,i})=
    \begin{cases}
        \frac{\mathbf{1}_{\{D(\V n,\V k)\leq r\}}(\degb{k}{n}{i}+m\delta)}{\sum\limits_{D(V_n,V_j)\leq r}(\degb{j}{n}{i}+m\delta)+m-i}\quad & \text{if}\quad 1\leq k \leq n-1, \\
        \frac{\degb{n}{n}{i}+m\delta+m-i}{\sum\limits_{D(V_n,V_j)\leq r}(\degb{j}{n}{i}+m\delta)+m-i}\quad & \text{if}\quad k = n,
    \end{cases}\label{edgeconnectprob}
\end{align}
where $\degb{k}{n}{i}$ denotes the degree of vertex $\V k$ in $GPM_{n,i}$, and $\delta >-m$ is the fitness parameter. Here we allow self-loops (i.e., allow $k=n$) to prevent the denominator from being zero when no existing vertices lie within distance $r$ of the new vertex. It is straightforward to see that when $\delta$ grows larger, the distribution of new edges is more uniform and the rich-get-richer phenomenon is weaker. In this paper, we assume $\delta>0$. We define $GPM_n := GPM_{n,m}$.

\subsection{Notation}\label{section-notation}

In this section, we record a list of notations that we shall use throughout the paper. Recall that $S$ is the surface of a ball of dimensions $d$ with a surface area of $1$. Also, recall that $D(x,y)$ is the Euclidean distance on $S$. For any $X\in S$, let $A(X)$ denote the area of $X$. Recall that $GPM_{n,i}$ is the network generated by the geometric preferential attachment process after adding the first $i$ edges of the vertex $\V n$ and $GPM_n:=GPM_{n,m}$. 

We use $RGG_n$ to denote a random geometric graph on the same vertex set as $GPM_n$, and two vertices $\V i,\V j$ are connected by an edge if and only if $D(\V i,\V j)\leq r$. This random geometric graph represents the ``possible edges" in the geometric preferential attachment network. When a new vertex is added, its neighbors in RGG form the candidate set for the edge-adding process. 

Let $\gV n:=\{\V 1,\V 2\dots, \V n\}$ be the vertex set in $GPM_n$. Denote $p_n(x,y)$ as the graph distance between the vertices $x$ and $y$ in $GPM_n$ ($0$ when $\V i$ and $\V j$ are disconnected), $d_n(x,y)$ as the graph distance on the random geometric network with vertex set $\gV n\cup \{x,y\}$ (note that $p_n$ is defined on the vertex set $\gV n$, while $d_n$ is defined on $S$). Define $diam(GPM_n)=\max\limits_{1\leq i<j\leq n}(\pd{n}{i}{j})$ as the diameter of $GPM_n$.

Recall that $\degb{i}{n}{j}$ is the degree of $\V i$ in $GPM_{n,j}$. Define $\wda{i}{n}{j}:=\degb{i}{n}{j}+m\delta$ which is the ``weight" of $\V i$ in $GPM_{n,j}$. Define $\dega{i}{n}:=\degb{i}{n}{m}$ and $\wdb{i}{n}:=\wda{i}{n}{m}$. 

Also define
\begin{equation*}
    L(n):=\sum\limits_{1\leq i\leq n-1, D(\V i,\V n)\leq r}\wdb{i}{n-1}+m(2+\delta)
\end{equation*}
as the sum of weights in the detection area of $\V n$, and 
\begin{equation*}
    L_i(n)=\sum\limits_{1\leq j\leq n, D(\V i,\V j)\leq r}\wdb{j}{n}
\end{equation*}
as the sum of weights in the detection area of $\V i$ at time $n$. When $i>n$, we assume that we know the position of $\V i$ in advance when calculating $L_i(n)$. 

Let $v_{n,i}$ be the vertex to which the $i$-th edge from $\V n$ connects, and let $e_{n,i}=(\V n,v_{n,i})$ be that edge. Let $\mathcal{F}_{n,i}$ be the sigma field generated by the first $n$ vertices and edges up to $e_{n,i}$, and define $\mathcal{F}_n:=\mathcal{F}_{n,m}$.
Let $\mathcal{G}_{i,n}$ be the sigma field generated by $\mathcal{F}_i$ and the position of vertex $\V n$.

For any two positive sequences $\{a_n\}$ and $\{b_n\}$, we write equivalently $a_n=O(b_n)$, $b_n=\Omega(a_n)$, $a_n\lesssim b_n$ and $b_n\gtrsim a_n$ if there exists a positive absolute constant $c$ such that $a_n/b_n\leq c$ holds for all $n$. We write $a_n=o(b_n)$, $b_n=\omega(a_n)$, $a_n\ll b_n$, and $b_n\gg a_n$ if $a_n/b_n\to 0$ as $n\to\infty$. We write $a_n=\Theta(b_n)$, if both $a_n=O(b_n)$ and $a_n=\Omega(b_n)$. We extend these notations to two sequences of positive random variables if they satisfy the condition almost surely. For example, for two sequences of random variables $\{X_n\}$ and $\{Y_n\}$, we write $X_n=o(Y_n)$ if $X_n/Y_n\to 0$ as $n\to\infty$ almost surely.

Unless otherwise specified, all parameters $c$, $C$, $C_i$ in this paper are constants that depend only on $m$ and $\delta$, and their values may change from line to line.  

\subsection{Main results}
In this section, we present the main results of this paper. Our first result characterizes the asymptotic number of triangles in the GPM.
\begin{theorem}\label{main-triangle-count}
Suppose $p$ is a constant. Let $T_n$ be the number of triangles in $GPM_n$, i.e.
\begin{align*}
    T_n=\#\{1\leq a<b<c\leq n,\ 1\leq t_1,t_2,t_3\leq m \mid v_{b,t_1}=V_a,\ v_{c,t_2}=V_a,\ v_{c,t_3}=V_b\}.
\end{align*}
For $\V i,\V j\in S$, let $X$ be uniformly chosen on $S$, define 
\begin{align*}
    &f(\V i,\V j):=p^{-1}\p(\V i,\V j\in B(X,r)),\\
    &F_p:=\e(f(\V i,\V j)\mid \V j\in B(\V i,r)).
\end{align*}
Then, when $n\rightarrow \infty$,  
\begin{align*}
    T_n= (1+o(1))\frac{m(m-1)(1+\delta)^2(m(1+\delta)+1)F_p \log(n)}{(2+\delta)\delta^2p}.
\end{align*}
\end{theorem}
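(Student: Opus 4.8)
The plan is to compute $\e[T_n]$ by a first-moment argument and then to show $T_n$ concentrates. Expanding the definition,
\[
\e[T_n]=\sum_{1\le a<b<c\le n}\ \sum_{t_1=1}^{m}\sum_{\substack{t_2,t_3=1\\ t_2\ne t_3}}^{m}\p\!\big(v_{b,t_1}=V_a,\ v_{c,t_2}=V_a,\ v_{c,t_3}=V_b\big),
\]
I would evaluate a single term by conditioning successively on $\mathcal{F}_{c-1}$, then $\mathcal{F}_b$, then $\mathcal{F}_{b-1}$, so as to peel off first the two edges out of $V_c$ and then the edge out of $V_b$. First, given $\mathcal{F}_{c-1}$ and the position of $V_c$, the two edges of $V_c$ land on $V_a$ and $V_b$ with probability $(1+o(1))\,W_a(c-1)W_b(c-1)/L(c)^2$ on the event $V_c\in B(V_a,r)\cap B(V_b,r)$ and $0$ otherwise; integrating over $V_c$ (independent of $\mathcal{F}_{c-1}$) replaces the geometric indicator by the area $A\big(B(V_a,r)\cap B(V_b,r)\big)=p\,f(V_a,V_b)$. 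Second, conditioning on $\mathcal{F}_b$ and propagating the weights from time $b$ to $c-1$ gives $\e[W_a(c-1)\mid\mathcal{F}_b]=(1+o(1))W_a(b)(c/b)^{\chi}$ and likewise for $V_b$, with $\chi:=\tfrac1{2+\delta}$; on the event $v_{b,t_1}=V_a$ the weight of $V_a$ jumps by exactly $1$ (a second edge of $V_b$ also hitting $V_a$ has probability $O(1/b)$), so $W_a(b)=W_a(b-1)+1+o(1)$, while $\e[W_b(b)]=(1+o(1))m(1+\delta)$. Third, conditioning on $\mathcal{F}_{b-1}$ and the position of $V_b$: the edge $v_{b,t_1}=V_a$ occurs with probability $(1+o(1))W_a(b-1)/L(b)$ on $\{D(V_a,V_b)\le r\}$, which together with the previous jump produces the second rising-factorial moment $\e\big[W_a(b-1)(W_a(b-1)+1)\big]$, and integrating over $V_b$ turns the proximity indicator times $f(V_a,V_b)$ into $p\,F_p=p\,\e[f(V_a,V_b)\mid D(V_a,V_b)\le r]$.

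Feeding in the weight-moment asymptotics $\e[W_a(n)]=(1+o(1))m(1+\delta)(n/a)^{\chi}$ and $\e\big[W_a(n)(W_a(n)+1)\big]=(1+o(1))m(1+\delta)(m(1+\delta)+1)(n/a)^{2\chi}$ together with the replacements $L(b)\approx p\,m(2+\delta)b$, $L(c)\approx p\,m(2+\delta)c$, each term becomes
\[
\p\!\big(v_{b,t_1}=V_a,\ v_{c,t_2}=V_a,\ v_{c,t_3}=V_b\big)=(1+o(1))\,\frac{(1+\delta)^2\big(m(1+\delta)+1\big)F_p}{p\,m(2+\delta)^3}\cdot\frac{c^{2\chi}}{a^{2\chi}\,b\,c^2}.
\]
Summing over the $m^2(m-1)$ admissible slot triples and over $a<b<c\le n$, and using that $\delta>0$ makes $2\chi=\tfrac2{2+\delta}<1$ so that
\[
\sum_{1\le a<b<c\le n}\frac{c^{2\chi}}{a^{2\chi}\,b\,c^2}=(1+o(1))\,\frac{\log n}{(1-2\chi)^2}=(1+o(1))\,\frac{(2+\delta)^2}{\delta^2}\,\log n
\]
(via $\sum_{a<b}a^{-2\chi}\sim b^{1-2\chi}/(1-2\chi)$, then $\sum_{b<c}b^{-2\chi}/(1-2\chi)\sim c^{1-2\chi}/(1-2\chi)^2$, then $\sum_{c\le n}c^{-1}\sim\log n$), one recovers exactly $\e[T_n]=(1+o(1))\,m(m-1)(1+\delta)^2(m(1+\delta)+1)F_p\log n/\big((2+\delta)\delta^2 p\big)$. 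Because every $o(1)$ above vanishes only as $a,b,c\to\infty$, this step needs a companion uniform upper bound $\p(\cdots)\lesssim c^{2\chi}/(a^{2\chi}bc^2)$ valid for all $a<b<c$, after which truncating at $a\le K$ (whose total contribution is $O(\log K)$) and letting $K\to\infty$ slowly closes the estimate. To pass from $\e[T_n]$ to $T_n$ I would bound $\mathrm{Var}(T_n)$ by a second-moment computation of the same kind — the only non-negligible covariances come from potential triangles sharing a vertex and are controlled via $\e[W_a(n)^4]=O((n/a)^{4\chi})$ — obtaining $\mathrm{Var}(T_n)=o\big((\log n)^2\big)$ and hence $T_n=(1+o(1))\e[T_n]$.

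Two inputs carry the real content. The first is a \emph{self-averaging} statement for the normalization: for every $x\in S$, $\sum_{j\le n-1:\,D(V_j,x)\le r}W_j(n-1)=(1+o(1))\,p\,m(2+\delta)n$ with high probability, uniformly in $x$; equivalently $L(n)=(1+o(1))\,p\,m(2+\delta)n$, and $1/L(n)$ and $1/L(n)^2$ may be replaced by $1/(p\,m(2+\delta)n)$ and its square inside the expectations above, with errors summable over the relevant $(a,b,c)$. Morally this holds because the weights are $O(n^{\chi})$ with $\chi<1$, so the $\Theta(pn)$ weights inside any detection region are not dominated by a single one and their sum concentrates; making it uniform in $x$, and decoupling $L(c)$ (resp.\ $L(b)$) from the particular weights $W_a(c-1),W_b(c-1)$ (resp.\ $W_a(b-1)$) sitting beside it, is the main obstacle, and is essentially the only place where the geometry interacts nontrivially with the attachment dynamics. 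The second input is the two weight-moment asymptotics quoted above: once the geometry has been averaged out by the self-averaging estimate, the GPM-weight of a fixed vertex behaves to leading order exactly like the weight in ordinary preferential attachment with additive parameter $m\delta$, so these follow from the standard rising-factorial recursion for P\'olya-type urns, the growth rates $\chi$ for the first moment and $2\chi$ for the second rising-factorial moment being precisely what produces the factor $(1-2\chi)^{-2}=(2+\delta)^2/\delta^2$ in the final answer.
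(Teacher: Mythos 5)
Your proposal follows essentially the same route as the paper: a first/second moment computation in which each per-edge attachment probability is asymptotically $W_a(t)/((2+\delta)mpt)$ once the local weight sum $L(t)$ is replaced by its deterministic approximation $(2+\delta)mpt$, the geometric indicators integrate to $p^2F_p$, and the sum $\sum_{a<b<c}a^{-2\chi}b^{-1}c^{2\chi-2}\sim(2+\delta)^2\delta^{-2}\log n$ produces the stated constant (your arithmetic checks out against the paper's). The paper packages the conditioning computation as a general subgraph-probability estimate (its Theorem \ref{thm3.1}, proved via an auxiliary super/submartingale $X_t$ run on the good event that $L(i)$ is concentrated for all $i$), but that is an organizational difference, not a different idea.

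Two caveats. First, the self-averaging statement you declare as an input --- $L(n)=(1+o(1))(2+\delta)mpn$ uniformly, with errors summable over $(a,b,c)$ --- is precisely where the bulk of the paper's work for this theorem lives (all of Section \ref{sectionln}). Your heuristic (``no single weight dominates the $\Theta(pn)$ weights in a detection region'') does not by itself yield a proof: the evolution of the weight sum in a region around $x$ is driven by the weight sums of earlier vertices near $x$, so the concentration is self-referential; the paper resolves this by a bootstrap, first establishing a crude multiplicative bound (Lemmas \ref{lem2.3}--\ref{lem2.8}) and then iterating Lemma \ref{lem3.8} / Lemma \ref{lem2.12} to contract the error factor toward $2+\delta$, with the ``uniformly in $x$'' issue handled by Lemma \ref{lem2.7}. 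Stating this as an assumption is legitimate in a blind sketch, but it is the theorem's real content. Second, your concluding step ($\mathrm{Var}(T_n)=o((\log n)^2)$ plus Chebyshev) gives only convergence in probability for each fixed $n$, whereas the paper's $o(\cdot)$ between random sequences is defined to mean almost sure convergence; the paper gets this from the sharper bound $\mathrm{Var}(T_n)=O(\log n)$ combined with monotonicity of $T_n$ and a union bound along the geometric subsequence $a_i=\mathrm{e}^{(1+\varepsilon/2)^i\log M}$. Both points are repairable within your framework, but as written the proposal is a correct skeleton rather than a complete proof.
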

This quantifies how the geometric constraints amplify clustering. Compared to the PAM case (proven in \cite[Proposition 4.3]{EGGEMANN2011953}), the number of triangles in GPM is asymptotically equal to the number of triangles in PAM multiplied by $p^{-1}F_p$. Note that $F_p$ only depend on the parameter $p$ and the dimension $d$, and converges to a nonzero constant as $p\rightarrow 0$ for fixed $d$. Thus, the spatial structure increases the number of triangles by a factor of $\Theta(p^{-1})$.

An alternative interpretation of this result is as follows: for three vertices in the GPM to form a triangle, the distance between each pair must be less than $r$, and the probability of this occurring is $p^2 F_p$. For each triple of vertices satisfying this condition, the probability that they form a triangle is higher by a factor of $p^{-3}$ than in the PAM.

Our next result shows the growth rate and convergence of the maximum degree in the GPM.
\begin{theorem}\label{main-maximum-degree}
    (1)When the dimension $d$ is finite and fixed, there exist a constant $M$ and a function $f:\mathbb{R}^3\rightarrow [0,1]$ such that for any $n>Mp^{-1}\log(p^{-1})$ we have:
    \begin{align*}
        \p\left(C_1\log(p^{-1})^{\frac{1+\delta}{2+\delta}}(np)^{\frac{1}{2+\delta}}\leq \max\limits_{1\leq i\leq n}(\dega{i}{n})\leq C_2\log(p^{-1})^{\frac{1+\delta}{2+\delta}}(np)^{\frac{1}{2+\delta}}\right)\geq f(C_1,C_2,p)
    \end{align*}
    and $f(C_1,C_2,p)\rightarrow 1$ when either of the following is satisfied:
    \begin{align}
        &(a) C_1\rightarrow 0,\ C_2\rightarrow \infty,\ p<1,\ or\\
        &(b) C_1< D_1,\ C_2> D_2,\ p\rightarrow 0. \label{eq:thm1.2-b}
    \end{align}
    Here $D_1,D_2>0$ are two constants that only depend on the dimension $d$ and the parameters $m,\delta$.
    
    (2)There exists a random variable $X$ such that 
    \begin{align*}
        \frac{\max\limits_{1\leq i\leq n}(\dega{i}{n})}{\log(p^{-1})^{\frac{1+\delta}{2+\delta}}(np)^{\frac{1}{2+\delta}}}\rightarrow X \quad a.s.
    \end{align*}
    when $n\rightarrow \infty$.
\end{theorem}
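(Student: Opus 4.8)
The plan is to reduce both parts to sharp tail estimates for the degree of a single vertex, combined with an independence argument over the $\asymp p^{-1}$ oldest vertices. Write $\Phi_n:=\log(p^{-1})^{(1+\delta)/(2+\delta)}(np)^{1/(2+\delta)}$, $W_i(n):=\deg_{V_i}(n)+m\delta$, and let $\pi_i(n):=\prod_{k\le n}(1+\rho_i(k))$ with $\rho_i(k):=m\int_{B(V_i,r)\cap S}\bigl(\sum_{D(V_j,x)\le r}W_j(k)+m(2+\delta)\bigr)^{-1}dx$, so that $\e[W_i(n+1)\mid\mathcal F_n]=W_i(n)(1+\rho_i(n))$ after conditioning on the position of $V_{n+1}$. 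First I would show, via a Chernoff bound and a union bound over $i\le\lceil p^{-1}\rceil$, that once $n>Mp^{-1}\log(p^{-1})$ the weight in every detection region is $(1+o(1))m(2+\delta)pn$; hence $\rho_i(k)=(1+o(1))\tfrac1{(2+\delta)k}$ for $k\gtrsim p^{-1}$ while $\rho_i(k)=\Theta(p)$ for $k\lesssim p^{-1}$ (the region of $V_i$ being still essentially empty), so $\pi_i(n)=\Theta\bigl((n/\max(i,p^{-1}))^{1/(2+\delta)}\bigr)$, and after dividing by $\pi_i(n)$ the process $W_i(n)$ is an $L^2$-bounded martingale, giving $W_i(n)/\pi_i(n)\to\zeta_i$ a.s.\ and in $L^2$. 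The structural observation driving everything is that $W_i$ is controlled by an \emph{early burst}: if $V_i$ attracts a constant fraction of the first $k$ vertices that enter $B(V_i,r)$ — which have all arrived by real time $\asymp k/p$ — then $W_i$ is of order $k$ at ``arrival time'' $k$, and the subsequent preferential-attachment growth inflates this to order $k\,(np/k)^{1/(2+\delta)}=k^{(1+\delta)/(2+\delta)}(np)^{1/(2+\delta)}$ at time $n$; the exponent $(1+\delta)/(2+\delta)$ in the normalisation comes from exactly this conversion.

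The heart of the argument is the two-sided tail bound: for $i\le\lceil p^{-1}\rceil$ and $1\le\lambda\lesssim(pn)^{(1+\delta)/(2+\delta)}$,
\[
  \exp\!\bigl(-c'\lambda^{(2+\delta)/(1+\delta)}\bigr)\le\p\bigl(\deg_{V_i}(n)\ge\lambda\,\pi_i(n)\bigr)\le\exp\!\bigl(-c\,\lambda^{(2+\delta)/(1+\delta)}\bigr).
\]
For the upper bound I would use the deterministic constraint $\deg_{V_i}(k)\le m\,|B(V_i,r)\cap\mathbf{V}^{k}|$ with $|B(V_i,r)\cap\mathbf{V}^{k}|\sim\mathrm{Bin}(k,p)$: on the overwhelmingly likely event that this count is $(1+o(1))pk$ for $k\gtrsim p^{-1}$, an excursion of $W_i(k)/\pi_i(k)$ above level $\lambda$ can happen only once $k\gtrsim p^{-1}\lambda^{(2+\delta)/(1+\delta)}$, and reaching level $\lambda$ there forces $V_i$ to attract a constant fraction of the $\asymp\lambda^{(2+\delta)/(1+\delta)}$ vertices that have entered $B(V_i,r)$ by then; since the competing weight inside a detection region stays of the same order as $W_i$, each such vertex attaches to $V_i$ with probability bounded away from $1$, and a Chernoff bound on the number of attachments yields $\exp(-c\lambda^{(2+\delta)/(1+\delta)})$ (the atypical-count event costs at most the same). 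The matching lower bound is obtained by prescribing precisely this trajectory and checking that the subsequent martingale does not crash (an $\Omega(1)$ event). This is the step where the hypothesis $n>Mp^{-1}\log(p^{-1})$ really enters: for $\lambda\asymp(\log p^{-1})^{(1+\delta)/(2+\delta)}$ the relevant burst time is $\asymp p^{-1}\log(p^{-1})$, which must lie in $[1,n]$, and the binomial counts must be concentrated uniformly over the $\asymp p^{-1}$ old vertices.

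Given these estimates, part (1) follows by extreme-value reasoning. For the upper bound, $\Phi_n/\pi_i(n)\asymp(\log p^{-1})^{(1+\delta)/(2+\delta)}$ for $i\le p^{-1}$ and $\asymp(\log p^{-1})^{(1+\delta)/(2+\delta)}(ip)^{1/(2+\delta)}$ for $i>p^{-1}$, so a union bound gives $\sum_{i\le n}\p(\deg_{V_i}(n)>C_2\Phi_n)\lesssim p^{\,cC_2^{(2+\delta)/(1+\delta)}-1}$ uniformly in $n$, which tends to $0$ as $p\to0$ once $C_2>D_2:=c^{-(1+\delta)/(2+\delta)}$ and as $C_2\to\infty$ for fixed $p$. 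For the lower bound I would pass to a pairwise $2r$-separated subcollection $\mathcal I\subseteq\{1,\dots,\lceil p^{-1}\rceil\}$ — of size $\ge\epsilon p^{-1}$ with probability $1-e^{-cp^{-1}}$ by a packing argument, since $2r$-balls have area $\asymp p$ — on which no incoming vertex's detection region meets two vertices of $\mathcal I$, so the events $\{\deg_{V_i}(n)\ge C_1\Phi_n\}$ are independent with probabilities $\ge p^{\,c'C_1^{(2+\delta)/(1+\delta)}}\Omega(1)$; hence
\[
  \p\bigl(\max_i\deg_{V_i}(n)<C_1\Phi_n\bigr)\le\exp\!\bigl(-\Omega(p^{\,c'C_1^{(2+\delta)/(1+\delta)}-1})\bigr)+e^{-cp^{-1}}\to0
\]
once $C_1<D_1:=(c')^{-(1+\delta)/(2+\delta)}$. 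For fixed $p$ with $C_1\to0$ one instead uses $\deg_{V_1}(n)/n^{1/(2+\delta)}\to\zeta_1>0$ in $L^2$ together with $\deg_{V_1}(n)\ge m$ to get $\sup_n\p(\deg_{V_1}(n)<\epsilon n^{1/(2+\delta)})\to0$. Reading off $D_1,D_2$ from the two tail constants gives (1).

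For part (2) (fixed $p$, so $\Phi_n=\Theta(n^{1/(2+\delta)})$), the per-vertex a.s.\ limits $\zeta_i^\star:=\lim_n\deg_{V_i}(n)/\Phi_n$ exist by the martingale convergence above; I would set $X:=\sup_i\zeta_i^\star$, which is finite a.s.\ because $\p(\zeta_i^\star>t)\le\exp(-ct^{(2+\delta)/(1+\delta)}\max(1,ip)^{1/(1+\delta)})$ is summable in $i$ (here the rough exponential tail from a rising-factorial moment bound would already suffice). Then $\liminf_n\max_i\deg_{V_i}(n)/\Phi_n\ge\zeta_i^\star$ for each $i$, hence $\ge X$; for the reverse inequality one fixes a deterministic $K$, handles $i\le K$ by pointwise convergence, and for $i>K$ applies Doob's $L^q$ maximal inequality to $W_i(\cdot)/\pi_i(\cdot)$ with $\e[W_i(n)^q]\le(Cq)^q\pi_i(n)^q$ (fixed $q>2+\delta$) to obtain $\sum_{i>K}\p(\exists n:\deg_{V_i}(n)>\epsilon\Phi_n)\le C_{q,\epsilon}\sum_{i>K}i^{-q/(2+\delta)}\to0$ as $K\to\infty$, so $\limsup_n\max_i\deg_{V_i}(n)/\Phi_n\le X+\epsilon$ a.s.; letting $K\to\infty$ then $\epsilon\to0$ identifies $X$ as the required random variable. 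The main obstacle is the sharp one-vertex tail of the second paragraph — pinning the exponent $(2+\delta)/(1+\delta)$ in both directions by playing the geometric constraint $\deg_{V_i}(k)\le m|B(V_i,r)\cap\mathbf{V}^{k}|$ against the preferential-attachment dynamics — and, for the lower bound on the maximum, engineering true independence of $\Omega(p^{-1})$ of these events despite the overlapping detection regions.
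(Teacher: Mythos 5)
Your overall strategy is the same as the paper's: control the degree of each early vertex up to a burst time of order $p^{-1}\log(p^{-1})$ using the geometric constraint $\dega{i}{k}\le m\,\#\{B(\V i,r)\cap \gV k\}+m$ and a forced-attachment construction, then propagate to time $n$ with super/submartingale concentration at multiplicative rate $1+\tfrac{1+o(1)}{(2+\delta)k}$ (the paper's Lemmas \ref{me3} and \ref{me4}), take a union bound for the upper bound and a packing argument for the lower bound, and get part (2) from per-vertex martingale limits. Your identification of the tail exponent $(2+\delta)/(1+\delta)$ via the burst heuristic is correct and is exactly the mechanism behind the paper's choice $B=\lfloor Cp^{-1}\log(p^{-1})\rfloor$ in \eqref{maxdegstes} and Lemma \ref{lem5.4}; your part (2) via $X=\sup_i\zeta_i^\star$ with a maximal-inequality control of late vertices is a legitimate alternative to the paper's Cauchy-sequence argument.

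The one step that fails as literally stated is the independence claim in the lower bound. Pairwise $2r$-separation of $\mathcal I$ only guarantees that no arriving vertex lies within $r$ of two members of $\mathcal I$; it does not make the events $\{\dega{i}{n}\ge C_1\Phi_n\}_{i\in\mathcal I}$ independent. The dynamics couple them through intermediate vertices: a vertex $u$ with $D(u,\V i)$ close to $r$ has its weight influenced by arrivals up to distance $2r$ from $\V i$, and that weight enters the normalizer $L(\cdot)$ seen by later arrivals near $\V i$; moreover the whole edge-allocation process is a single filtration, not a product. The paper's Lemma \ref{lem5.4} circumvents this with three devices you would need to import: (i) Poissonization, so that the point configurations in \emph{disjoint} balls $B(x_j,\tfrac32 r)$ (centers $3r$-separated, not $2r$) are independent; (ii) restricting the forced high-degree vertex and its forced attachers to the inner ball $B(x_j,\tfrac12 r)$, so that every relevant normalizer $L(b_i)$ is controlled by the point count in $B(x_j,\tfrac32 r)$ alone; and (iii) a ``good ball'' density condition ensuring the forced connection probabilities are bounded below \emph{uniformly in the configuration outside the ball}, so that one can multiply conditional failure probabilities sequentially rather than invoke independence. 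Without (ii) and (iii) your per-vertex success probability $p^{c'C_1^{(2+\delta)/(1+\delta)}}$ cannot be turned into a product bound on the joint failure event. A second, smaller soft spot: in case (a) of part (1), $L^2$ convergence of $\dega{1}{n}/n^{1/(2+\delta)}$ does not by itself give $\sup_n\p(\dega{1}{n}<\epsilon n^{1/(2+\delta)})\to0$ as $\epsilon\to0$; you need $\p(\zeta_1>0)=1$, which requires an extra argument (e.g., wait until $\wdb{1}{N}\ge K$, which happens a.s., then apply the submartingale bound of Lemma \ref{me4} from time $N$). Both gaps are repairable along the paper's lines, but they are the two places where your write-up would not survive as written.
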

As shown in Theorem \ref{main-maximum-degree}, the maximum degree of the GPM grows sublinearly as $(np)^{1/(2+\delta)}$, similar to the PAM, but modulated by a logarithmic factor dependent on $\delta$. Intuitively speaking, the additional logarithmic factor arises due to the following reason: the degree of each vertex in the GPM grows locally like a vertex in the PAM with $np$ vertex; however, the area of the detection region of each vertex is $p$, so we have $O(p^{-1})$ vertices that grow nearly independently, and the logarithmic factor arises since we need to take the maximum over the degree of these vertices.

The next theorem concerns the threshold of the connectivity probability in the GPM.
\begin{theorem}\label{main-connectivity}
     There exist functions $f_1,f_2:\mathbb{R}^{+}\rightarrow [0,1)$ such that when $n\geq 2$, then 
     \begin{align*}
         f_1(p^{\frac{m}{m-1}}n)\leq \p(GPM_n \text{ is connected})\leq f_2(p^{\frac{m}{m-1}}n),
     \end{align*}
     and
     \begin{align*}
         \lim_{x\rightarrow 0}f_2(x)=0, \lim_{x\rightarrow \infty}f_1(x)=1.
     \end{align*}
 \end{theorem}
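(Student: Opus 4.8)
\section*{Proof proposal for Theorem~\ref{main-connectivity}}

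The plan is to show that, up to the claimed threshold, $GPM_n$ is connected if and only if it has no isolated vertex, and that the expected number of isolated vertices is of order $p^{-m}n^{1-m}=(p^{m/(m-1)}n)^{-(m-1)}$; this is exactly where the exponent $m/(m-1)$ originates. Concretely I would set
$$f_1(x):=\inf\{\p(GPM_n\text{ connected}):n\ge2,\ p\in(0,1),\ p^{m/(m-1)}n\ge x\},\qquad f_2(x):=\sup\{\p(GPM_n\text{ connected}):p^{m/(m-1)}n\le x\},$$
both of which lie in $[0,1)$: $f_1(x)<1$ because disconnection always has positive probability (e.g.\ $V_n$ drawing only self-loops), and $f_2(x)<1$ will follow from the uniform lower bound on $\p(GPM_n\text{ disconnected})$ produced below. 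The theorem then reduces to the two uniform statements $\lim_{x\to0}f_2(x)=0$ and $\lim_{x\to\infty}f_1(x)=1$.

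For the upper bound, let $N$ be the number of isolated vertices of $GPM_n$ and write $\{V_k\text{ isolated}\}=E_k\cap F_k$, where $E_k$ is the event that all $m$ edges drawn from $V_k$ are self-loops and $F_k$ that no later vertex attaches to $V_k$. From \eqref{edgeconnectprob} one gets $\p(E_k\mid\mathcal G_{k-1,k})=(1+o(1))\prod_{i=0}^{m-1}(i+m(1+\delta))\cdot L(k)^{-m}$, and since $L(k)$ concentrates around $m(2+\delta)pk$ (a sum of $\Theta(pk)$ near-i.i.d.\ weights, the rare event of an anomalously heavy detection region being excluded via Theorem~\ref{main-maximum-degree}), $\p(E_k)\asymp(pk)^{-m}$ once $pk\to\infty$. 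A numerical coincidence of this model is that an isolated vertex carries weight exactly $m(2+\delta)$, which equals the average vertex weight; hence $\p(V_j\text{ attaches to }V_k)=(1+o(1))m/j$ for each $j>k$, giving $\p(F_k\mid E_k)=(1+o(1))(k/n)^m$. Multiplying, $\p(V_k\text{ isolated in }GPM_n)\asymp(pn)^{-m}$ uniformly over bulk indices $k$, so $\e N\asymp n(pn)^{-m}=(p^{m/(m-1)}n)^{-(m-1)}$, which tends to $\infty$ as $x\to0$ (the degenerate case $n$ bounded, $p\to0$ being handled directly by a union bound over the $\le\binom n2$ possible edges, and the regime $pn=O(1)$ by the cruder bound $\p(V_k\text{ isolated})\ge(1-p)^{n-1}$). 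Finally $\p(N\ge1)\to1$ by a second-moment argument: the two competing factors are arranged so that even for a pair $V_k,V_{k'}$ with overlapping detection regions $\p(V_k,V_{k'}\text{ both isolated})=(1+o(1))\p(V_k\text{ isolated})\p(V_{k'}\text{ isolated})$, whence $\mathrm{Var}(N)=\e N+o((\e N)^2)$ and Chebyshev's inequality applies. Since $N\ge1$ forces disconnection, $f_2(x)\to0$.

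For the lower bound it suffices to show that, uniformly over $p^{m/(m-1)}n\ge x$, the probability that $GPM_n$ has a component of size at most $n/2$ tends to $0$ as $x\to\infty$; split on the component size $s$. For $s\le\tfrac12 pn$ a first-moment estimate applies: a component of size $s$ is an $RGG_n$-connected set all of whose $ms$ out-edges land inside it and which receives no in-edge from outside, and summing the resulting probability over all such sets — the tight, single-cell clusters dominating, more spread-out sets paying an additional geometric price — one finds the total expectation is $(1+o(1))$ times its $s=1$ value $\Theta((p^{m/(m-1)}n)^{-(m-1)})\to0$. For $\tfrac12 pn<s\le n/2$ we exploit that $pn\gg\log n$ in this regime (a consequence of $p\gg n^{-(m-1)/m}$), so that $RGG_n$ is, whp, ``cell-connected'': fixing a mesh of $\Theta(1/p)$ regions of area $\Theta(p)$, any splitting of the vertex set into a part of size $>\tfrac12 pn$ and its complement must cut some pair of adjacent cells and hence leave $\Theta((pn)^2)$ $RGG_n$-edges absent in $GPM_n$; as each $RGG_n$-edge $(V_i,V_j)$, $i<j$, is present in $GPM_n$ with probability $\gtrsim 1/(pn)$, and the number of connected cell-unions with cell-boundary $b$ is at most $\tfrac1p C^b$, a union bound bounds this contribution by $\tfrac np\,e^{-\Theta(pn)}\to0$. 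Hence $f_1(x)\to1$.

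The principal obstacle is the lower bound, and within it the medium range of component sizes together with the demand for uniformity: neither the first-moment bound (which deteriorates as $s$ approaches $pn$) nor a crude union bound over all vertex subsets (far too numerous) covers the whole range $1\le s\le n/2$, so one must use that $RGG_n$-connected sets are essentially geometric — unions of mesh cells — to keep the union bound polynomial in $1/p$; moreover every estimate must be organized so as to depend on $n,p$ only through $p^{m/(m-1)}n$, which in particular forces a sharp treatment of the random, correlated weight sums $L(k)$ so that the exponents $-m$ (from $E_k$) and $+m$ (from $F_k$) balance — and here one must rule out that heavy-tailed degrees (Theorem~\ref{main-maximum-degree}) inflate the total weight of any detection region. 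A secondary technical point is verifying the asserted asymptotic independence of isolation events for nearby vertices needed in the second-moment step.
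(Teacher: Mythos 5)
Your treatment of the disconnected regime ($f_2(x)\to 0$) is sound and close to the paper's in substance: both hinge on the computation $\p(V_k\text{ isolated in }GPM_n)\asymp (pk)^{-m}(k/n)^m$, using exactly the coincidence you note that an isolated vertex has weight $m(2+\delta)$. The paper avoids your second-moment step (and hence the need to verify asymptotic independence of isolation events) by a stopping-time argument: it looks at the \emph{first} time $\tau\ge an$ at which a newborn vertex is isolated, shows $\p(\tau>n)\le\exp(-c\,n^{1-m}p^{-m})$, and bounds the probability that this one vertex is later hit by $2m\log(a^{-1})$, which is small for $a$ close to $1$. That is a cheaper route to the same conclusion and you could adopt it to sidestep your flagged "secondary technical point."

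The connected regime is where your proposal has a genuine gap. Your plan — first moment over components of size $s\le\frac12 pn$, then a cell/mesh argument for $\frac12 pn<s\le n/2$ — is the standard template for static geometric graphs, but the first-moment step is not carried out and is the hard part here: the probability that a given $RGG$-connected set of $s$ vertices is "closed" (all $ms$ out-edges internal, no in-edges from outside) under the preferential-attachment dynamics involves the evolving, mutually dependent weights of all $s$ vertices, and the paper's only subgraph-probability tool (Theorem \ref{thm3.1}) carries a $(1+O((a')^{-c}))$ error valid only for \emph{bounded} $k$; nothing in the paper, and nothing in your sketch, controls configurations of size growing like $pn$, nor the entropy of $RGG$-connected sets of that size. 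Your assertion that the sum is "$(1+o(1))$ times its $s=1$ value" is exactly the claim that needs proof. The medium-range step also has an unjustified reduction: a component is not a union of mesh cells, so the union bound over "connected cell-unions with boundary $b$" does not obviously cover all partitions, and one must argue that any component of size in $(\frac12 pn, \frac n2]$ forces some adjacent (or identical) pair of cells each holding $\Theta(pn)$ vertices on opposite sides of the cut. The paper bypasses all of this with a single drift argument: it tracks $X_n=C_n+0.1I_n$ (components plus a surcharge for isolated vertices) and proves, via a case analysis on the area of each component's "dense region" (Lemma \ref{lem6.2a}), that every component of size $\le 0.6n$ contributes at least $\frac{m-0.99}{n}$ to the expected decrease of $X_n$, yielding $\e(C_n-1)=O((np^{m/(m-1)})^{1-m})$ directly, uniformly over all component sizes and without any enumeration. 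To make your route rigorous you would essentially have to rebuild a quantitative closed-set estimate of this strength, so the drift argument is not just stylistically different — it is what fills the hole your outline leaves open.
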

It turns out that the emergence of isolated vertices is the main factor that makes the GPM disconnected. When the network size $n$ reaches the order $p^{-m/(m-1)}$, isolated vertices start to disappear and the probability of being connected jumps from near $0$ to near $1$. The critical scaling depends on both the geometric factor ($p$) and the out-degree of each vertex ($m$).

Finally, we examine the diameter of GPM. Recall the definition of $p_n(V_i,V_j)$ and $diam(GPM_n)$ in Section \ref{section-notation}.
\begin{theorem}\label{main-diameter}
    Suppose $p$ is a constant. There exist constants $C_1,C_2$ such that for any $p>0$, almost surely, for $n$ sufficiently large, we have
    \begin{align*}
        C_1\log(n)\leq diam(GPM_n)\leq C_2\log(n).
    \end{align*}
\end{theorem}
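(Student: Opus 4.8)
The plan is to establish the upper and lower bounds separately, with the upper bound being the substantial part. For the lower bound $diam(GPM_n)\geq C_1\log(n)$, the idea is standard: because each new vertex adds only $m$ edges, the total number of vertices reachable within graph distance $k$ from any fixed vertex grows at most like $(Cm)^k$ for a suitable constant $C$ absorbing the geometric branching (a new vertex $V_n$ can attach to at most $m$ existing vertices, and old vertices gain in-edges, but a crude counting argument bounds the size of a ball of radius $k$). Since the graph has $n$ vertices, we must have $(Cm)^{diam}\gtrsim n$, giving $diam\geq c\log n$. One must be slightly careful because in-degrees are unbounded, so instead I would count directed paths or use the fact that the number of edges is exactly $mn$, hence a BFS tree of depth $k$ rooted anywhere has at most... actually the cleanest route is: the number of pairs at distance $\le k$ is at most $n\cdot(\text{something}^k)$ only if degrees are bounded, which they are not. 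So instead I would pick a vertex $V_i$ with $i$ in a suitable range whose neighborhood within RGG is sparse, or simply invoke that a positive fraction of vertices have both in- and out-degree $O(1)$ (degree concentration for ``late'' vertices, which follows from the power-law tail implicit in the model), and run BFS from such a vertex. This yields $diam(GPM_n)=\Omega(\log n)$ almost surely.

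For the upper bound, the strategy follows the Flaxman--Frieze--Vera approach from \cite{Flaxman01012007} but must be adapted to the constant out-degree regime. The key geometric observation is that when $p$ is a constant, the random geometric graph $RGG_n$ on $\gV n$ is a very dense, well-connected expander-like graph: any two points on $S$ are joined in $RGG_n$ by a path of length $O(1)$ (in fact $O(1/r)=O(1)$ hops suffice to cross the sphere), and moreover for any point $x$ the ball $B(x,r)\cap S$ contains $\Theta(np)=\Theta(n)$ vertices. The first step is to show that $diam(GPM_n)\lesssim \log n$ by a two-scale argument: (i) show that within any detection ball, the induced GPM subgraph has diameter $O(\log n)$; (ii) show that one can hop between overlapping detection balls in $O(1)$ steps, and $O(1/r)=O(1)$ such hops connect any two balls. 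Combining (i) and (ii) gives the bound. Step (ii) is essentially a statement about $d_n$, the RGG graph distance, which is $O(1)$ when $p$ is constant; one needs that ``most'' edges of this RGG path are realized in the GPM, which requires a union bound over the $O(n)$ potential bridging vertices in each overlap region, each of which connects to the relevant target with probability bounded below by $\Theta(1/L(\cdot)) = \Theta(1/n)$ — so in expectation $\Theta(1)$ bridges succeed, and a second-moment or Chernoff argument upgrades this to high probability.

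The heart of the matter, and the step I expect to be the main obstacle, is (i): bounding the internal diameter of the GPM restricted to a single detection ball, which contains $N:=\Theta(n)$ vertices. Here one would like to mimic the proof that the standard PAM has diameter $O(\log N / \log\log N)$ or at least $O(\log N)$ — the result we need is only the weaker $O(\log N)$ bound, which is more robust. The difficulty is that the attachment probabilities inside a ball are coupled to vertices outside the ball through the normalization $L(n)$, and a vertex's edges may leave the ball entirely. I would handle this by the standard technique of tracking, for each vertex, the time at which it acquires its edges, and showing that every vertex $V_k$ is within $O(\log n)$ steps of the ``early'' high-degree core: specifically, one shows that for a geometrically increasing sequence of times $n_0 < n_1 < \cdots$, every vertex born in $(n_j, n_{j+1}]$ within the ball connects, with probability bounded below, to some vertex born before $n_j$ within the (slightly enlarged) ball, because the weight mass of the pre-$n_j$ vertices in the ball is a constant fraction of $L$. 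Chaining these $O(\log n)$ levels and union-bounding over all $n$ vertices and all $O(n)$ balls (say balls centered at the vertices themselves) gives the result almost surely via Borel--Cantelli. The technical friction will be making the ``constant fraction of weight is early and local'' claim precise uniformly in the ball, since $L(n)$ fluctuates and the set of vertices in a ball is itself random; I would control this with the weight-evolution estimates and concentration bounds for $L_i(n)=\Theta(np)$ that are developed earlier in the paper for Theorems \ref{main-triangle-count} and \ref{main-maximum-degree}.
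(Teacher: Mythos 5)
Your lower bound has a genuine gap. You correctly observe that the naive ball-growth argument fails because in-degrees are unbounded, but your proposed repair---start a BFS from a vertex of degree $O(1)$---does not fix the problem: even from a bounded-degree start, the radius-$2$ ball can already contain a hub of degree $n^{1/(2+\delta)}$, and nothing controls the ball sizes from there on. Note also that your argument nowhere uses $\delta>0$, yet the $\Omega(\log n)$ lower bound is false for $\delta\le 0$ (where the diameter is $o(\log n)$), so the essential mechanism is missing. The paper's route is a first-moment count of \emph{paths} rather than of vertices in balls: using the subgraph-probability estimate of Theorem \ref{thm3.1} (extended to paths of length $C_1\log n$ rather than constant length, which requires rechecking the error terms), the expected number of paths of length $k$ from the first $M$ vertices into the rest of the graph is at most $C_\delta^{\,k} n^{1/2}$ by Lemma 2.4 of \cite{Dommers2010}, where finiteness of $C_\delta$ uses $\delta>0$. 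Summing over $k\le C_1\log n$ with $C_1$ small gives $o(n)$, so most vertices are at distance more than $C_1\log n$ from $\gV{M}$, hence from each other. Without an estimate of this type your lower bound does not go through.

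Your upper bound is a plausible but genuinely different route from the paper's. You propose a two-scale argument (internal diameter of each detection ball, plus $O(1)$ hops between balls) with a dyadic chaining to an early core inside each ball. The paper instead runs a single global argument: it defines $X_n=\mathbf{1}_{A_n^c}\sum_{i=1}^{n}\wdb{i}{n}\exp(\theta\, p_n(\V i,\gV{M}))$, shows $\e(X_{n+1}-X_n\mid\mathcal{F}_n)\le \frac{1+\mathrm{e}^{\theta}(1+\delta+\varepsilon)}{(2+\delta-\varepsilon)n}X_n$ using the concentration of $L(n)$, and concludes by Markov that every vertex is within $O(\log n)$ of $\gV{M}$ (with $GPM_M$ connected by Theorem \ref{main-connectivity}). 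This buys a much shorter proof and avoids your main technical friction: since $p$ is constant, a detection ball contains $\Theta(n)$ vertices anyway, so the localization gains nothing, and the ``constant fraction of weight is early and local'' claim that you flag as delicate is exactly what the exponential-moment bookkeeping handles automatically. Your chaining scheme could likely be pushed through (the probability of failing to halve the birth time on a given edge is bounded away from $1$, so the number of steps to reach the core is stochastically dominated by a sum of geometrics), but it would be substantially longer.
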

The network exhibits a small-world phenomenon. Despite spatial constraints, the diameter grows logarithmically with the size of the network when the number of vertices is large enough, similar to the PAM with $\delta>0$, which was proved in \cite{Dommers2010}. However, the evolution of the diameter when the number of vertices is small is still unknown. From computer simulations, we observe that when vertices are added to the GPM, the diameter first increases rapidly to a local maximum, then decreases until reaching a steady state, and finally grows logarithmically.
    \subsection{Extension to general preference function}
         \cite{Flaxman01012007} extends the geometric preferential attachment model to a more general case. We can similarly revise our model by changing the edge connection probability in \eqref{edgeconnectprob} to
        \begin{align*}
            \p(e_{n,i+1}=(\V n,\V k)\mid GPM_{n,i})=
    \begin{cases}
        \frac{f(D(\V n,\V k))(\degb{k}{n}{i}+m\delta)}{\sum\limits_{j=1}^{n-1}f(D(\V n,\V j))(\degb{j}{n}{i}+m\delta)+m\delta+m-i},\quad & \text{if}\quad 1\leq i \leq n-1 \\
        \frac{\degb{n}{n}{i}+m\delta+m-i}{\sum\limits_{j=1}^{n}f(D(\V n,\V j))(\degb{j}{n}{i}+m\delta)+m\delta+m-i},\quad & \text{if}\quad i = n
    \end{cases}
        \end{align*}
        where $f:\mathbb{R}^{\geq 0}\rightarrow \mathbb{R}^{\geq 0}$ is a given function with $f(0)=1$. Our model then becomes a special case of this generalized model with $f(x)=\mathbf{1}_{{x\leq r}}$. Most of the methods in this paper remain valid under this generalization. Suppose $z$ is any fixed vertex on $S$ and 
        \begin{align*}
            &p=\int_{S}f(D(u,z))du,\\
            &F=\int_{S^2}f(D(u,v))f(D(v,z))f(D(u,z))dudv.
        \end{align*} 
        It is easy to verify that the values of $p$ and $F$ do not depend on the choice of the position of the vertex $z$. Then we can give an estimate for the number of triangles:
        \begin{align*}
            T_n= (1+o(1))\frac{F}{p^3}\frac{m(m-1)(1+\delta)^2(m(1+\delta)+1)\log(n)}{(2+\delta)\delta^2}.
        \end{align*}
And the maximum degree in this network grows polynomially with the exponent $\frac{1}{2+\delta}$, and the diameter grows logarithmically, similar to the PAM. However, because the edge connection probability for a new vertex depends on all previous vertices, it is more challenging to analyze the early evolution of the process, especially when $\int_{S}f(D(u,z))^2du=\Omega(p^2)$. In this case, the concentration of $L(n)$ becomes weaker, and we cannot provide an estimate for the maximum degree that is accurate up to a constant.

For the connectivity threshold, note that the probability that $\V n$ is an isolated vertex in $GPM_n$ is $O((np)^{-m})$, and the argument in Section \ref{disconnectss} remains valid. However, the situation in the connected regime may differ. For example, when $f(x)=a \cdot \mathbf{1}_{{x\leq r}}$ satisfies $p=\int{S}f(D(u,z))du=1$, then for each vertex, the probability that its distance to the new vertex is less than $r$ is only $a^{-1}$. Therefore, with high probability, the network will not be connected until $n=\Omega(a\log a)$. Estimating the connection time and establishing a result analogous to equation (1) in Theorem $\ref{main-maximum-degree}$ remains an open problem.

\section{Preliminary estimates on candidate set weights}\label{sectionln}

The main difficulty in analyzing the GPM is that the total weight in the candidate set (i.e., $L(n)$) is a random variable compared to a constant in the PAM. In this section, we address this problem by proving two theorems about the concentration of $L(n)$, the latter of which is a strengthening of the former. These theorems are similar to Lemma 4.3 in \cite{Flaxman01012006} but more suitable for smaller $n$, and provide a probabilistic error bound for any arbitrary error tolerance $\varepsilon>0$.
\begin{theorem}\label{thm2.1}
For any $\varepsilon>0$, there exist constants $C_1,C_2,C_3>0$ such that for any $n,p>0$, we have
\begin{align*}
    \p((2+\delta-\varepsilon)mpn\leq L(n)\leq (2+\delta+\varepsilon)mpn)\geq 1-C_1\exp(-C_2(np)^{C_3}). 
\end{align*}
When the dimension $d$ is finite and fixed, then $C_3=1$. 
\end{theorem}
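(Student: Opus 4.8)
The starting point is that the total weight in $GPM_{n-1}$ is deterministic: every vertex contributes $m\delta$ and the total degree after $n-1$ insertions is exactly $2m(n-1)$, so $\sum_{i=1}^{n-1}\wdb{i}{n-1}=m(2+\delta)(n-1)$. Since $\V n$ is placed uniformly and its detection cap has area exactly $p$, each indicator $\mathbf 1_{\{D(\V i,\V n)\le r\}}$ has conditional mean $p$, hence $\e[L(n)\mid\mathcal F_{n-1}]=m(2+\delta)\bigl(1+(n-1)p\bigr)$ --- a constant, equal to $(1+o(1))(2+\delta)mpn$ as soon as $np\to\infty$ (and when $np$ stays bounded the asserted bound is vacuous, so we may assume $np$ large). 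So this is a genuine concentration problem. Expanding each degree as a sum of endpoint indicators yields the exact identity
\begin{align*}
    L(n)=m(2+\delta)+m(1+\delta)\,|N_n|+Z_n,\qquad Z_n:=\sum_{j=1}^{n-1}\sum_{k=1}^{m}\mathbf 1_{\{v_{j,k}\in B(\V n,r)\}},
\end{align*}
where $N_n=\{i\le n-1:D(\V i,\V n)\le r\}$ and $Z_n$ counts, with multiplicity, the edge \emph{targets} of $GPM_{n-1}$ that fall in the cap of $\V n$; one checks $\e[|N_n|]=\e[Z_n]/m=(n-1)p$. Conditionally on $GPM_{n-1}$ one has $|N_n|\sim\mathrm{Bin}(n-1,p)$, which a Chernoff bound pins to $(1\pm\varepsilon)(n-1)p$ with probability $1-2\exp(-c\varepsilon^2 np)$, uniformly in the dimension. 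Everything thus reduces to proving the same $(1\pm\varepsilon)$ concentration for $Z_n$, a sum of $m(n-1)$ bounded indicators of mean $m(n-1)p$.

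Were the targets $v_{j,k}$ picked uniformly among caps, $Z_n$ would be essentially binomial; the preferential rule skews them, but the skew is governed by the candidate-set weights, and here one must break a circularity, since the degrees governing the skew depend on the very quantities $L(j)$ being estimated. This is handled by a bootstrapping induction along a geometric sequence of times $n_\ell=2^\ell$, the initial range $j<n_0=\Theta_{m,\delta,d}(1)$ being treated with the trivial bounds $m(2+\delta)\le L(j)\le m(2+\delta)j$. Assuming inductively that $L(j)\ge(2+\delta-\varepsilon)mpj$ for all $j\le n_\ell$ off an event of the inductively controlled probability, the denominator in \eqref{edgeconnectprob} forces every selection probability to be $O\bigl(\wdb{i}{j}/(mpj)\bigr)$, so for each fixed $\V i$ the degree $\dega{i}{j}$ is dominated, as $j$ grows to $n_{\ell+1}$, by a supermartingale-type process whose increments a Freedman-type inequality controls with failure probability small enough that a union bound over the $\le n_{\ell+1}$ vertices stays summable in $\ell$; this gives $\max_i\dega{i}{n_{\ell+1}}\le C\,(n_{\ell+1}p)^{1/(2+\delta)}\,(\log p^{-1})^{(1+\delta)/(2+\delta)}$ (for Theorem \ref{thm2.1} alone, any bound that is $o(mnp)$ suffices). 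Fed into the concentration step below, this degree bound re-establishes the two-sided concentration of $L$ at scale $n_{\ell+1}$, closing the induction.

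For the concentration of $Z_n$ itself, work on the good event above (so $\max_i\dega{i}{n-1}\le B=o(mnp)$) together with a ``spatially balanced'' event --- every cap of radius $2r$ contains at most $C_d np$ vertices --- which is again a Chernoff bound on a binomial, costing $\exp(-c_d np)$. Run a Doob martingale that first exposes all vertex positions and then reveals the $m$ out-edges of $\V 1,\dots,\V{n-1}$ in turn. Revealing the edges of $\V j$ moves $Z_n$ directly by at most $m$ --- and only when $\V j\in B(\V n,2r)$, since non-loop targets lie within $r$ of $\V j$ --- and indirectly through the effect of the resulting degree perturbation on later choices. The point that makes the martingale estimate go through is that the indirect effect on $Z_n$ is small: the extra attention accruing to a perturbed vertex is redistributed \emph{among the vertices sharing its cap}, hence is essentially invisible to $Z_n$, which records only the cap total; what survives is a boundary flux that, using $\delta>0$ and the denominator bound $L(j')\gtrsim mpj'$, is $O(m)$ per step after the $O_{\varepsilon,d}(1)$ earliest insertions are discarded (their aggregate contribution to $Z_n$ is at most a fraction $\varepsilon$ of $m(n-1)p$, since only $\lesssim\varepsilon np$ of them land in $B(\V n,2r)$). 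With $O_d(np)$ relevant steps, each of size $O(m)$, a martingale Bernstein inequality gives $\p\bigl(|Z_n-m(n-1)p|\ge\varepsilon m(n-1)p\bigr)\le\exp(-c_{\varepsilon,d}\,np)$; combining with the bounds for $|N_n|$, for spatial balance, and with the induction's union bound gives the claimed $1-C_1\exp(-C_2(np)^{C_3})$. When $d$ is finite and fixed, all dimension-dependent constants ($2^d$, the maximal number of pairwise-intersecting caps, the ratio $A(B(x,2r))/p$) are absorbed and every exponent above is linear in $np$, so $C_3=1$; when $d$ is allowed to grow these dilation constants blow up and only a weaker power $(np)^{C_3}$ with $C_3<1$ survives.

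\textbf{Main obstacle.} The real difficulty is twofold: the circular dependence between the concentration of $L(j)$ and control of the degree sequence, resolved by the staged induction; and, more delicately, the genuine long-range dependence of preferential attachment, which makes a bounded-difference argument over vertex positions hopeless. The resolution --- exposing the already-frozen edges of $GPM_{n-1}$ rather than the positions, and exploiting that a single edge perturbation merely reshuffles mass \emph{within} a cap and so barely moves the cap total $Z_n$ --- is where the bulk of the work sits, and is also what forces the hypotheses $\delta>0$ and, for $C_3=1$, fixed $d$.
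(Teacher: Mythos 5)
Your route is genuinely different from the paper's. You decompose $L(n)=m(2+\delta)+m(1+\delta)|N_n|+Z_n$ and attack the edge-count $Z_n$ with a Doob martingale over edge exposures; the paper instead fixes the (future) position of $\V n$ and tracks the cap weight $X_i=L_n(i)$ as a process in the time index $i$, derives $\e(X_{i+1}\mid\cdot)\le(1+\tfrac{1}{C_1 i})X_i+(1+\delta)mp$ from a provisional lower bound $L(j)\ge C_1mpj$ on the denominators (valid for the vertices near $\V n$, controlled via Lemma \ref{lem2.7}), applies the bespoke super/sub-martingale inequalities of Lemmas \ref{me}--\ref{me2}, and then iterates the self-improving map $C\mapsto(1+\delta)C/(C-1)$, whose fixed point is $2+\delta$, finitely many times (Lemma \ref{lem3.8}). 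Your dyadic bootstrap plays the role of that iteration, and your starting observation that $\e(L(n)\mid\mathcal F_{n-1})$ is deterministic is correct. (Minor slip: conditionally on the positions in $GPM_{n-1}$ the indicators $\mathbf 1_{\{D(\V i,\V n)\le r\}}$ are all functions of the single point $\V n$ and are not independent; $|N_n|\sim\mathrm{Bin}(n-1,p)$ holds after conditioning on $\V n$ and averaging over the other positions, which is all you need.)

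There is, however, a genuine gap at the load-bearing step: the assertion that the Doob martingale for $Z_n$ has increments $O(m)$. The direct effect of revealing $\V j$'s edges is at most $m$, but the indirect effect is not: moving one unit of degree from $u'$ to $u$ at time $j$ is amplified by the subsequent preferential dynamics to an expected discrepancy of order $(n/j)^{1/(2+\delta)}$ in final degrees, and the cancellation you invoke ("mass reshuffles within a cap, only a boundary flux survives") applies only to future vertices whose caps contain both $u$ and $u'$; for the positive-measure set of future vertices whose cap contains exactly one of them, the conditional probability of hitting $B(\V n,r)$ genuinely shifts, and these shifts compound recursively. Likewise, vertices $\V j$ with $D(\V j,\V n)>2r$ have zero direct effect but nonzero indirect effect on $\e(Z_n\mid\cdot)$ through chains of influence, so restricting the martingale to the $O(np)$ steps near the cap also needs justification. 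Quantifying this cancellation --- showing the net increment is $O(m)$ rather than $O((n/j)^{1/(2+\delta)})$ --- is essentially the whole difficulty of the theorem, not a step one may assert: without it, Azuma with increments $(n/j)^{1/(2+\delta)}$ yields quadratic variation $\Theta(n)$ and a failure probability of only $\exp(-c\varepsilon^2np^2)$, which is vacuous in the regime $np\to\infty$, $np^2\to0$ that the theorem must cover. Until that coupling/propagation estimate is supplied, the proof is incomplete; the paper's device of working with the process $i\mapsto L_n(i)$, whose one-step conditional drift is controlled directly by the current bound on $L(i)$, is precisely what sidesteps this propagation analysis.
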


This is a theorem about estimating the large deviation of $L(n)$. Notice that for each previous vertex $\V i$,
\begin{align*}
    \p(D(\V i,\V n)\leq r)=p.
\end{align*}
Then we have
\begin{align*}
    \e(L(n))=m(2+\delta)+\sum\limits_{i=1}^{n-1}p\wdb{i}{n-1}=(2+\delta)mpn.
\end{align*}

The main idea of the proof is similar to the mathematical recursion. We will first prove a rough bound as in Lemma \ref{lem2.3} and Lemma \ref{lem2.8}. Then we will show that for a new vertex, if all vertices close to the new vertex satisfy that bound, then we can give a more precise bound for the new vertex. Repeating the process gives the desired bound.

For a vertex $\V i$ and $j\geq 1$, define $S_{i,n}^j:=\{ x\in S\mid d_n(\V i,x)\leq j\}$ and $S_{i,n}^0:=\{\V i\}$. We use $G_n(S_{i,n}^j)$ to represent the number of vertices in $S_{i,n}^j\cap \gV{n}$, recall that $A(X)$ is the area of $S$. Then it is easy to see that
    \begin{align}\label{rggrecursion}
        A(S_{i,n}^{j+1})\leq pG_n(S_{i,n}^{j}).
    \end{align}
If we assume that we know the position of the vertices in $S_{i,n}^{j}$, we can give a bound for $A(S_{i,n}^{j+1})$ by \eqref{rggrecursion}, and the position of the vertices in $S\setminus S_{i,n}^{j}$ follows a uniform distribution. Then we can use recursion to estimate $G_n(S_{i,n}^j)$, which is shown in the following lemma.
\begin{lemma}\label{sijexpbound}
For $j\geq 1$,
      \begin{align*}
        \e(\exp((2pn+2)^{-(j-1)} G_n(S_{i,n}^{j})))\leq  \exp(2pn+1).
    \end{align*}
\end{lemma}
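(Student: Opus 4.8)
The plan is to prove the bound by induction on $j$, using the recursion \eqref{rggrecursion} together with a conditioning argument that exploits the fact that vertices outside $S_{i,n}^{j}$ are uniformly distributed. Fix $j\ge 1$ and condition on $\mathcal{H}_j$, the information determining $S_{i,n}^{j}$ (the positions of all vertices lying in $S_{i,n}^{j}$, or more precisely enough information to know the region $S_{i,n}^{j}$ and the set of vertices inside it). Conditionally on $\mathcal{H}_j$, each of the remaining $n - G_n(S_{i,n}^{j})$ vertices is uniform on $S \setminus S_{i,n}^{j}$, hence falls in the annulus $S_{i,n}^{j+1}\setminus S_{i,n}^{j}$ with probability at most $A(S_{i,n}^{j+1})/A(S\setminus S_{i,n}^{j}) \le A(S_{i,n}^{j+1}) \le p\,G_n(S_{i,n}^{j})$, using \eqref{rggrecursion}. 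Therefore, conditionally on $\mathcal{H}_j$, the increment $G_n(S_{i,n}^{j+1}) - G_n(S_{i,n}^{j})$ is stochastically dominated by a $\mathrm{Bin}(n,\, p\,G_n(S_{i,n}^{j}))$ random variable, so that for any $\lambda>0$,
\begin{align*}
    \e\!\left(\exp\!\big(\lambda (G_n(S_{i,n}^{j+1})-G_n(S_{i,n}^{j}))\big)\,\middle|\,\mathcal{H}_j\right) \le \big(1 + p\,G_n(S_{i,n}^{j})(e^\lambda - 1)\big)^n \le \exp\!\big(n p\,G_n(S_{i,n}^{j})(e^\lambda-1)\big).
\end{align*}

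Now I choose $\lambda$ so the exponent telescopes cleanly. Writing $a := 2pn+2$, take $\lambda = a^{-j}$; since $a^{-j}$ is small we have $e^\lambda - 1 \le \lambda + \lambda^2 \le 2\lambda = 2 a^{-j}$ (for $n$ not too small; the small-$n$ regime is handled by noting $G_n(S_{i,n}^{j}) \le n$ and checking the bound directly, or by absorbing into constants). Then $np(e^\lambda - 1) \le 2np\,a^{-j} \le a \cdot a^{-j} = a^{-(j-1)}$. Hence
\begin{align*}
    \e\!\left(\exp\!\big(a^{-j} G_n(S_{i,n}^{j+1})\big)\right)
    &= \e\!\left(\exp\!\big(a^{-j} G_n(S_{i,n}^{j})\big)\,\e\!\left(\exp\!\big(a^{-j}(G_n(S_{i,n}^{j+1})-G_n(S_{i,n}^{j}))\big)\,\middle|\,\mathcal{H}_j\right)\right)\\
    &\le \e\!\left(\exp\!\big(a^{-j} G_n(S_{i,n}^{j}) + a^{-(j-1)} G_n(S_{i,n}^{j})\big)\right)
    = \e\!\left(\exp\!\big((a^{-j} + a^{-(j-1)}) G_n(S_{i,n}^{j})\big)\right).
\end{align*}
Since $a^{-j} + a^{-(j-1)} \le 2 a^{-(j-1)} \le a^{-(j-2)}$ (as $a \ge 2$), this is at most $\e(\exp(a^{-((j+1)-1)+1} G_n(S_{i,n}^{j})))$... and here I need to be careful: the exponent $a^{-j}+a^{-(j-1)}$ must be bounded by $a^{-((j-1))}$ to close the induction with hypothesis $\e(\exp(a^{-(j-1)}G_n(S_{i,n}^{j}))) \le \exp(2pn+1)$. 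Indeed $a^{-j}+a^{-(j-1)} = a^{-(j-1)}(a^{-1}+1) \le a^{-(j-1)} \cdot \tfrac{3}{2} $, which is \emph{larger} than $a^{-(j-1)}$, so the naive telescoping loses a constant at each step. The fix is to use a slightly slack induction hypothesis: prove instead $\e(\exp(c\,a^{-(j-1)} G_n(S_{i,n}^{j}))) \le \exp(2pn+1)$ for a suitable absolute constant $c \le 1$ chosen so that $c\,a^{-j} + a^{-(j-1)} \le c\,a^{-(j-1)}$ fails — so instead one tunes the recursion with $a = 2pn+2$ chosen precisely so that the factor $(1 + 1/a) \cdot (\text{per-step multiplier from }np(e^\lambda-1))$ stays $\le 1$. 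Concretely, replacing the crude bound $np(e^\lambda-1)\le a^{-(j-1)}$ by the sharper $np(e^\lambda-1) \le np \cdot 2a^{-j} = \frac{np}{pn+1}a^{-(j-1)} \le a^{-(j-1)} - (\text{slack})$ and re-checking that $a^{-j} + np(e^\lambda-1) \le a^{-(j-1)}$, i.e. $a^{-j} + \frac{2np}{a}a^{-(j-1)} \le a^{-(j-1)}$, i.e. $a^{-1} + \frac{2np}{a} \le 1$, i.e. $1 + 2np \le a = 2pn+2$ — which holds. So the exponent at step $j+1$ is $\le a^{-(j-1)}$, contradicting my earlier miscalculation; the clean inequality $a^{-j} + np(e^\lambda - 1) \le a^{-(j-1)}$ is exactly what the choice $a = 2pn+2$ guarantees.

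With the telescoping fixed, the base case is $j=1$: $G_n(S_{i,n}^{1})$ counts vertices within RGG-distance $1$ of $\V i$, i.e. vertices $\V k$ with $D(\V i, \V k) \le r$ plus $\V i$ itself; each of the $\le n$ vertices lands there with probability $p$, so $\e(\exp(a^0 G_n(S_{i,n}^{1}))) = \e(e^{G_n(S_{i,n}^1)}) \le e \cdot (1 + p(e-1))^n \le e \cdot \exp(np(e-1)) \le \exp(1 + 2pn) = \exp(2pn+1)$, using $e - 1 < 2$. Then the inductive step above, with exponent multiplier controlled by $a = 2pn+2$, carries the bound $\exp(2pn+1)$ from $j$ to $j+1$ without degradation, completing the induction. \textbf{The main obstacle} I anticipate is precisely the bookkeeping in the telescoping: one must choose the sequence of exponents $a^{-(j-1)}$ and the constant $a = 2pn+2$ so that at each step the extra mass $np(e^\lambda-1)$ injected by the new annulus, when added to the previous exponent, does not exceed the next scheduled exponent — and also handle the regime of very small $np$ (where $e^\lambda - 1 \le 2\lambda$ may need a mild adjustment, or where one simply notes $G_n \le n$ makes the claimed bound trivial since $e^{(2pn+2)^{-(j-1)} n} \le e^n$ is not obviously $\le e^{2pn+1}$, so a genuine argument covering small $p$ is needed, e.g. always keeping $np \gtrsim$ the exponent by the choice of normalization). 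Handling the conditioning rigorously — defining $\mathcal{H}_j$ so that "$S_{i,n}^{j}$ is measurable and vertices outside are conditionally uniform" is literally true, perhaps by revealing vertices in a breadth-first order from $\V i$ — is a technical point but standard for RGG exploration arguments.
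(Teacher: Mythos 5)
Your proposal is correct and follows essentially the same route as the paper: condition on the vertices determining $S_{i,n}^{j}$, stochastically dominate the increment $G_n(S_{i,n}^{j+1})-G_n(S_{i,n}^{j})$ by $B(n,pG_n(S_{i,n}^{j}))$ via \eqref{rggrecursion}, and telescope the moment generating function with exponents $\theta_j=(2pn+2)^{-(j-1)}$, the choice $a=2pn+2$ being exactly what makes $a^{-j}+np(e^{a^{-j}}-1)\leq a^{-(j-1)}$ hold. Your worry about small $np$ is unnecessary, since $e^{\lambda}-1\leq 2\lambda$ holds for all $0\leq\lambda\leq 1$ and $a^{-j}\leq 1/2$ always, so the argument closes uniformly.
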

\begin{proof}
    Notice that condition on the vertices in $S_{i,n}^j$,  the distribution of $G_n(S_{i,n}^{j+1})-G_n(S_{i,n}^{j})$ is a binomial distribution $B\left(n-G_n(S_{i,n}^j),\frac{A(S_{i,n}^{j+1})-A(S_{i,n}^j)}{1-A(S_{i,n}^j)}\right)$, Which can be stochastic dominated by $B(n,pG_n(S_{i,n}^j))$. Then we have 
    \begin{align*}
        \e(\exp(\theta G_n(S_{i,n}^{j+1})))\leq  \e(\exp(\theta G_n(S_{i,n}^{j})+(\text{e}^{\theta}-1)pnG(S_{i,n}^{j}))).
    \end{align*}
    Take $\theta_j=(2pn+2)^{-(j-1)}$, then for all $k\geq 1$ we have
    \begin{align*}
        \e(\exp(\theta_{j+1} G_n(S_{i,n}^{j+1})))\leq  \e(\exp(\theta_{j} G_n(S_{i,n}^{j}))).
    \end{align*}
    Then 
    \begin{align*}
        \e(\exp(\theta_{j} G_n(S_{i,n}^{j})))\leq \e(\exp(G_n(S_{i,n}^{1})))\leq \exp(2pn+1).
    \end{align*}
\end{proof}
The following two lemmas provide rough lower and upper bounds.
\begin{lemma}\label{lem2.3}
    (Rough lower bound) For any constant $\varepsilon>0$,  
    \begin{align*}
        \p(L(n)\geq (1+\delta-\varepsilon)mp(n-1)+m(2+\delta))\geq 1- \exp\left(-\frac{\varepsilon^2}{2(1+\delta)^2}pn\right)
    \end{align*}
    for any $p,n>0$.
\end{lemma}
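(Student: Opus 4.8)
The plan is to discard the preferential-attachment dynamics altogether and keep only a deterministic lower bound on each vertex weight together with the (exactly binomial) number of candidate neighbours of $\V n$. Concretely: every vertex $\V i$ with $i\ge 1$ issues exactly $m$ edges when it arrives, each of which raises $\deg_{\V i}$ by at least $1$, so $\degb{i}{n-1}{m}\ge m$ and hence $\wdb{i}{n-1}=\degb{i}{n-1}{m}+m\delta\ge m(1+\delta)$ surely. Writing
\[
N_n:=\#\{\,1\le i\le n-1 : D(\V i,\V n)\le r\,\},
\]
we obtain the pointwise inequality $L(n)\ge m(1+\delta)\,N_n+m(2+\delta)$. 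Consequently it is enough to prove $\p\big(N_n\ge (1-\tfrac{\varepsilon}{1+\delta})\,p(n-1)\big)\ge 1-\exp(-\tfrac{\varepsilon^2}{2(1+\delta)^2}pn)$, since on that event
\[
L(n)\ \ge\ m(1+\delta)\Big(1-\tfrac{\varepsilon}{1+\delta}\Big)p(n-1)+m(2+\delta)\ =\ (1+\delta-\varepsilon)mp(n-1)+m(2+\delta).
\]

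Next I would pin down the law of $N_n$. Because the vertices $\V 1,\dots,\V n$ are placed i.i.d.\ uniformly on $S$ and the edge-formation choices are irrelevant to $N_n$, conditioning on the location of $\V n$ turns the indicators $\mathbf{1}_{\{D(\V i,\V n)\le r\}}$, $1\le i\le n-1$, into i.i.d.\ Bernoulli$(p)$ variables, and the resulting distribution $\mathrm{Bin}(n-1,p)$ does not depend on where $\V n$ fell; hence $N_n\sim\mathrm{Bin}(n-1,p)$. The multiplicative Chernoff lower-tail bound $\p\big(\mathrm{Bin}(N,p)\le (1-t)Np\big)\le \exp(-Npt^2/2)$, applied with $N=n-1$ and $t=\varepsilon/(1+\delta)$, then yields the asserted probability; the harmless discrepancy between $p(n-1)$ and $pn$ in the exponent plays no role in the recursive use of this lemma, and for $n=1$ the statement is trivial because $L(1)=m(2+\delta)$ already equals the target.

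There is no real obstacle here — the lemma is deliberately ``rough'' — but the one point that deserves care is the claim that $N_n$ is exactly binomial: this rests on the fact that in the GPM the vertex positions are i.i.d.\ uniform and independent of the entire edge process, which is precisely what decouples the spatial geometry from the degree dynamics. The price of this decoupling is the replacement of the true mean weight $m(2+\delta)$ per candidate vertex by the crude floor $m(1+\delta)$, which is exactly why the constant here is $1+\delta$ rather than $2+\delta$; recovering the sharp constant is the business of Theorem~\ref{thm2.1} and the recursion sketched after its statement.
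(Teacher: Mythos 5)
Your proposal is correct and follows essentially the same route as the paper: both bound $L(n)$ from below by $m(1+\delta)$ times the binomial$(n-1,p)$ count of candidate neighbours plus the additive constant, and then apply a Chernoff lower-tail bound (the paper does this by explicitly optimizing the moment generating function with $\theta=-\varepsilon/(1+\delta)$, which is the same computation as the multiplicative Chernoff bound you cite). The $p(n-1)$ versus $pn$ discrepancy in the exponent that you flag is present in the paper's own proof as well and is indeed immaterial to how the lemma is used.
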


\begin{proof}
    Since $L(n)\geq (1+\delta)mG_n(S_{n,n}^1)+m$, and the distribution of $S_{n,n}^1-1$ is the binomial distribution $B(n-1,p)$. Then $\e(\exp(\theta G_n(S_{n,n}^1)))= \exp(\theta+(\text{e}^{\theta}-1)p(n-1))$. Taking $\theta=\frac{-\varepsilon}{1+\delta}$, we get
    \begin{align*}
         \p(L(n)\leq (1+\delta-\varepsilon)mp(n-1)+m(2+\delta))
         \leq &\p\left(S_{n,n}^1\leq \frac{1+\delta-\varepsilon}{1+\delta}p(n-1)+1\right)\\
         \leq& \exp\left(-\frac{\varepsilon^2}{2(1+\delta)^2}p(n-1)\right).\qedhere
    \end{align*}
\end{proof}

\begin{lemma}\label{lem2.4}
    (Rough upper bound) There exist constants $C_1,C_2>0$ such that 
    \begin{align*}
        \p(L(n)\leq 20(2+\delta)m(pn)^2)\geq 1-20\exp(-pn)
    \end{align*}
    for any $p,n>0$.
\end{lemma}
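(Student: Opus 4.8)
The plan is to prove a \emph{deterministic} domination $L(n)\le m(2+\delta)\,G_n(S_{n,n}^2)$ and then extract the tail bound from Lemma~\ref{sijexpbound} (with $j=2$) via Markov's inequality. Since Lemma~\ref{sijexpbound} already supplies an exponential-moment bound for $G_n(S_{n,n}^2)$, essentially all of the probabilistic work is done once the deterministic inequality is in place; a crude Markov bound applied to $L(n)$ directly would only give polynomial decay $O((pn)^{-1})$, which is why the reduction to the ``two-hop'' vertex count is needed.

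First I would establish
\begin{align*}
    L(n)\ \le\ m(2+\delta)\,G_n(S_{n,n}^2).
\end{align*}
By definition $L(n)-m(2+\delta)=\sum_{i\le n-1,\ D(\V i,\V n)\le r}\wdb{i}{n-1}$ with $\wdb{i}{n-1}=\dega{i}{n-1}+m\delta$. Let $N:=\#\{i\le n-1: D(\V i,\V n)\le r\}$. Every such $\V i$ lies in $S_{n,n}^1\subseteq S_{n,n}^2$, so, since $\V n\in S_{n,n}^2$, we get $N\le G_n(S_{n,n}^2)-1$; this bounds the $m\delta N$ contribution. For the degree sum: every edge $e_{j,t}=(\V j,v_{j,t})$ of $GPM_{n-1}$ has both endpoints in the ball $B(\V j,r)$ (trivially for a self-loop, and otherwise because \eqref{edgeconnectprob} forces $D(\V j,v_{j,t})\le r$). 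Hence if an edge $e_{j,t}$ has an endpoint in $B(\V n,r)$, then that endpoint is a genuine vertex lying within distance $r$ of both $\V n$ and $\V j$, so it witnesses $d_n(\V n,\V j)\le 2$; thus the creating vertex $\V j$ of any edge incident to $B(\V n,r)$ lies in $S_{n,n}^2$. Since each of the $G_n(S_{n,n}^2)-1$ vertices of $\gV{n-1}$ in $S_{n,n}^2$ creates exactly $m$ edges, there are at most $m\bigl(G_n(S_{n,n}^2)-1\bigr)$ edges with an endpoint in $B(\V n,r)$, and, counting half-edges, each contributes at most $2$ to $\sum_{i\le n-1,\ D(\V i,\V n)\le r}\dega{i}{n-1}$. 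Summing the two estimates gives $\sum_{i\le n-1,\ D(\V i,\V n)\le r}\wdb{i}{n-1}\le(m\delta+2m)\bigl(G_n(S_{n,n}^2)-1\bigr)=m(2+\delta)\bigl(G_n(S_{n,n}^2)-1\bigr)$, and adding $m(2+\delta)$ back yields the displayed bound.

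Given this, Lemma~\ref{sijexpbound} applied to $\V n$ with $j=2$ (legitimate, since the position of $\V n$ is known) gives $\e\bigl(\exp((2pn+2)^{-1}G_n(S_{n,n}^2))\bigr)\le\exp(2pn+1)$, so Markov's inequality at level $20(pn)^2$ yields
\begin{align*}
    \p\bigl(L(n)\ge 20(2+\delta)m(pn)^2\bigr)\ \le\ \p\bigl(G_n(S_{n,n}^2)\ge 20(pn)^2\bigr)\ \le\ \exp\!\Bigl(2pn+1-\tfrac{10(pn)^2}{pn+1}\Bigr).
\end{align*}
An elementary check finishes the proof: writing $x=pn$, the inequality $\exp(2pn+1-\tfrac{10(pn)^2}{pn+1})\le 20\exp(-pn)$ is equivalent (after taking logs and clearing the denominator) to $7x^2+(\ln 20-4)x+(\ln 20-1)\ge 0$, and this quadratic in $x$ has negative discriminant, hence is positive for every real $x$.

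The only genuinely new content is the deterministic step, and I expect the care to be purely in the bookkeeping there. One must route each edge touching the candidate ball $B(\V n,r)$ through its \emph{creating} vertex --- this is what upgrades the metric statement ``within distance $2r$ of $\V n$'' to the combinatorial statement ``in $S_{n,n}^2$'', using that the middle vertex of the witnessing $2$-path is an actual vertex of the graph --- and one must handle self-loops and multi-edges correctly in the half-edge count and track the $\V n$-term in $G_n(S_{n,n}^2)$ (the source of the ``$-1$''s, and what makes the constant $m(2+\delta)$ exact, so that the literal constant $20$ comes out unchanged). It is worth noting that this domination is deliberately lossy by a factor of order $pn$, since typically $L(n)$ is of order $pn$ while $G_n(S_{n,n}^2)$ is of order $(pn)^2$; closing that gap is precisely what forces the sharper argument behind Theorem~\ref{thm2.1}.
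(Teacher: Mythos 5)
Your proposal is correct and follows essentially the same route as the paper: dominate $L(n)$ deterministically by $(2+\delta)m\,G_n(S_{n,n}^2)$ (the paper states this as $L(n)\le mG_n(S_{n,n}^2)+(1+\delta)mG_n(S_{n,n}^1)$, using the same observation that every edge touching $B(\V n,r)$ is created by a vertex of $S_{n,n}^2$), then apply Lemma~\ref{sijexpbound} with $j=2$ and Markov's inequality at level $20(pn)^2$, finishing with the same elementary numerical check. Your write-up of the deterministic step is more detailed than the paper's one-line justification, but the argument is the same.
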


\begin{proof}
    Since each edge intersecting $S_{n,n}^1$ must come from a vertex in $S_{n,n}^2$, so 
    \begin{equation*}
        L(n)\leq mG_n(S_{n,n}^2)+(1+\delta)mG_n(S_{n,n}^1).
    \end{equation*}
    Then, by Lemma \ref{sijexpbound},
    \begin{align*}
         \e(\exp((2pn+2)^{-1} G_n(S_{i,n}^2)))\leq  \exp(2pn+1).
    \end{align*}
    By Markov's inequality,
    \begin{align*}
        &\p(L(n)\geq 20(2+\delta)m(pn)^2)\leq \p(G_n(S_{i,n}^2)\geq 20(pn)^2)\\
        \leq &\min\left(1,\exp\left(-\frac{10(pn)^2}{pn+1}+2pn\right)\right)\leq 20\exp(-pn)
    \end{align*}
    which completes the proof of the lemma.
\end{proof}
The purpose of the next two lemmas is to help us to bound the large deviations of $L(n)$. 
\begin{lemma}\label{me}
    For constants $0<C_1<1, C_2,C_3\geq 0$, let $X_n$ be any random process satisfying 
    \begin{align}
        &\e(X_{n+1}\mid \mathcal{F}_{X_n})\leq (1+C_1n^{-1})X_n+C_2,\label{mec1}\\
        &X_{n+1}=X_n \text{ or }X_n+1\leq X_{n+1}\leq X_n+C_3.\label{mec2}
    \end{align}
    Where $\mathcal{F}_{X_n}$ is the sigma field generated by $\{X_i,i\leq n\}$. Then for any $n,M,N>0$, $C_1\neq \frac{1}{2}$,
    \begin{align*}
        &\p\left(\left.\exists \ N\leq i\leq n\text{~such~that~}  X_i\geq \frac{C_2}{1-C_1}i+M\prod\limits_{k=N}^{i-1}(1+C_1k^{-1}) \right|\mathcal{F}_{X_N}\right)\\
        \leq &\exp\left(-\frac{\max(0,M-X_N+\frac{C_2}{1-C_1}N)^2}{32C_3^2\max(M,\frac{C_2}{(1-C_1)|1-2C_1|}N^{\min(1,2C_1)}n^{1-\min(1,2C_1)})}\right).
    \end{align*}
\end{lemma}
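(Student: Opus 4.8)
The plan is to reduce the supremum bound to a one-sided Azuma/Hoeffding estimate for a suitable supermartingale built out of $X_n$. First I would introduce the deterministic "comparison sequence" $a_i := \frac{C_2}{1-C_1}i$, which (up to the $\prod(1+C_1k^{-1})$ correction) is the fixed point of the recursion \eqref{mec1}: indeed $a_{i+1} = (1+C_1 i^{-1})a_i + C_2$ holds up to an error of order $a_i/i^2$, so with the product factor $\pi_i := \prod_{k=N}^{i-1}(1+C_1k^{-1})$ absorbing the compounding, the process $Y_i := (X_i - a_i)/\pi_i$ is a supermartingale with respect to $\mathcal{F}_{X_i}$: \eqref{mec1} gives $\e(X_{i+1}-a_{i+1}\mid \mathcal F_{X_i}) \le (1+C_1 i^{-1})(X_i - a_i)$, and dividing by $\pi_{i+1}=(1+C_1i^{-1})\pi_i$ yields $\e(Y_{i+1}\mid\mathcal F_{X_i})\le Y_i$. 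The event in the statement is exactly $\{\exists\, N\le i\le n : Y_i \ge M\}$, i.e.\ a first-passage event for the supermartingale $Y$ started at $Y_N = X_N - \frac{C_2}{1-C_1}N$ (note $\pi_N=1$).

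Next I would control the increments of $Y$. From \eqref{mec2}, $X_{i+1}-X_i\in\{0\}\cup[1,C_3]$, and $a_{i+1}-a_i = \frac{C_2}{1-C_1}$ is a bounded constant, while $\pi_i$ is bounded below by $1$ and above by $\pi_n$. A short computation bounds the martingale differences of $Y$ by something of order $C_3/\pi_i$, but to get the stated denominator one wants the \emph{squared} increments to sum to the quantity $\sum_i (C_3/\pi_i)^2$, which is where the two cases $C_1<\tfrac12$ and $C_1>\tfrac12$ and the $\max$ over $M$ versus $\frac{C_2}{(1-C_1)|1-2C_1|}N^{\min(1,2C_1)}n^{1-\min(1,2C_1)}$ come from: since $\pi_i \asymp (i/N)^{C_1}$, one has $\sum_{i=N}^{n}\pi_i^{-2}\asymp \sum_{i=N}^n (N/i)^{2C_1}$, which is $\Theta(N)$ when $C_1>\tfrac12$, $\Theta(N^{2C_1}n^{1-2C_1})$ when $C_1<\tfrac12$, and logarithmic at $C_1=\tfrac12$ (hence the exclusion). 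Then I would apply a maximal Azuma–Hoeffding inequality for supermartingales with bounded differences — one-sided, since we only need the upper tail — to the stopped process, with the stopping time being the first $i\ge N$ at which $Y_i\ge M$; using the optional-stopping/Doob maximal trick this gives a bound of the form $\exp\!\big(-c\,(M - Y_N)_+^2 / \sum_i (\text{increment bound})_i^2\big)$, matching the stated expression after plugging in the sum estimates and the crude bound $\pi_i \le \pi_n \lesssim (n/N)^{C_1}$ which converts the denominator into the displayed $\max$.

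The main obstacle I anticipate is \emph{not} the probabilistic core (which is a textbook supermartingale maximal inequality) but the bookkeeping needed to make the clean supermartingale identity work with the genuine, only-approximate recursion: $a_{i+1}=(1+C_1i^{-1})a_i+C_2$ is not an exact identity, so one must either choose $a_i$ to solve the recursion exactly (leading to a messier closed form) or carry an additive error term $O(i^{-2})$ through and check it is summable and small relative to $M$. A second delicate point is getting the constant $32$ and the precise form of the denominator right: one has to be careful that the increment bound for $Y$ near the stopping time is $C_3/\pi_i$ and not $C_3 \cdot \pi_n/\pi_i$ — i.e.\ one should divide by $\pi_{i+1}$ at the right moment — and that the $\max(M,\dots)$ correctly dominates $\sum (C_3/\pi_i)^2$ uniformly in the regime $X_N \le \frac{C_2}{1-C_1}N + M$ where the bound is nontrivial. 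I would handle the $C_1=\tfrac12$ exclusion simply by noting the sum $\sum \pi_i^{-2}$ has a logarithmic correction there, so the lemma is stated away from that boundary; if needed a separate $\log$ factor can be inserted.
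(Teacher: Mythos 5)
Your frame — the rescaled process $Y_i=c_{i-1}^{-1}\bigl(X_i-\frac{C_2}{1-C_1}i\bigr)$ with $c_i=\prod_{k=N}^{i}(1+C_1k^{-1})$, the first-passage time $\tau_M=\inf\{i\ge N: Y_i\ge M\}$, and an exponential maximal inequality for the stopped supermartingale — is exactly the paper's. (One small correction: the recursion $a_{i+1}=(1+C_1i^{-1})a_i+C_2$ for $a_i=\frac{C_2}{1-C_1}i$ is an \emph{exact} identity, not an approximate one, so the ``main obstacle'' you anticipate does not exist and $Y$ is a genuine supermartingale with no error term to carry.)

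However, the core estimate in your proposal does not yield the stated bound. A bounded-difference Azuma inequality, using only that the conditional range of $Y_{i+1}-Y_i$ is at most $C_3c_i^{-1}$, produces a denominator of order $C_3^2\sum_{i=N}^{n}c_i^{-2}\asymp \frac{C_3^2}{|2C_1-1|}N^{\min(1,2C_1)}n^{1-\min(1,2C_1)}$, with \emph{no} factor of $C_2$ and no $M$ term. Your claim that $\max\bigl(M,\frac{C_2}{(1-C_1)|1-2C_1|}N^{\min(1,2C_1)}n^{1-\min(1,2C_1)}\bigr)$ dominates this sum is false whenever $C_2<1-C_1$ and $M$ is small relative to $N$ (take $C_2=0$: the max is just $M$, while the sum is $\Theta(N)$); and the regime of small $C_2$ is precisely the one used later in the paper, where $C_2\propto p\to0$. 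The missing idea is a Bernstein/Freedman-type bound exploiting the full strength of \eqref{mec2}: since the increment of $X$ is either $0$ or lies in $[1,C_3]$, one has $\e\bigl((X_{i+1}-X_i)^2\mid\mathcal F_{X_i}\bigr)\le C_3^2\,\p(X_{i+1}>X_i\mid\mathcal F_{X_i})\le C_3^2\bigl(C_1i^{-1}X_i+C_2\bigr)$, and before time $\tau_M$ one has the a priori bound $X_i\le c_iM+\frac{C_2}{1-C_1}i$. Feeding this into $\e(e^{\theta(Y_{i+1}-Y_i)}\mid\mathcal F_{X_i})\le 1+2\theta^2c_i^{-2}\e((X_{i+1}-X_i)^2\mid\mathcal F_{X_i})$ gives a predictable quadratic variation bounded by $\sum_i 2\theta^2c_i^{-2}C_3^2\bigl(C_1c_iMi^{-1}+\frac{C_2}{1-C_1}\bigr)\lesssim \theta^2C_3^2\bigl(M+\frac{C_2}{(1-C_1)|1-2C_1|}N^{\min(1,2C_1)}n^{1-\min(1,2C_1)}\bigr)$, using $\sum_i c_i^{-1}i^{-1}=O(C_1^{-1})$; optimizing over $\theta\le C_3^{-1}$ then gives exactly the stated denominator. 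Without this variance argument the lemma as stated is not reached.
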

\begin{proof}
    Define $c_n=\prod\limits_{i=N}^{n}(1+C_1i^{-1})=(1+O(N^{-1}))\left(\frac{n}{N}\right)^{C_1}$, $Y_n=c_{n-1}^{-1}(X_n-\frac{C_2}{1-C_1}n)$, then
    \begin{align*}
        \e(Y_{n+1}\mid \mathcal{F}_{X_n})&\leq c_n^{-1}\left((1+C_1n^{-1})X_n+C_2-\frac{C_2}{1-C_1}(n+1)\right)\\
        &=c_{n-1}^{-1}\left(X_n-\frac{C_2}{1-C_1}n\right)=Y_n.
    \end{align*}
    So $Y_n$ is a supermartingale. Now we want to make an exponential bound. Suppose $\tau_{M}=\inf\{n\geq N\mid Y_n\geq M\}$, then 
    \begin{align*}
        X_n\geq \frac{C_2}{1-C_1}n+M\prod\limits_{i=N}^{n-1}(1+C_1i^{-1}) \text{ implies } Y_n\geq M.
    \end{align*}
    For any $N\leq n<\tau_M$ we have
    \begin{align*}
        X_n\leq c_nM+\frac{C_2}{1-C_1}n.
    \end{align*}
    By \eqref{mec2}, we have
    \begin{align*}
        \p(X_{n+1}>X_n)\leq C_2+C_1n^{-1}X_n\leq C_2+C_1c_nMn^{-1}+\frac{C_1C_2}{1-C_1}.
    \end{align*}
    Notice that $X_n\leq X_{n+1}\leq X_n+C_3$. Thus, conditioned on $\mathcal{F}_{X_n}$, $Y_{n+1}$ takes values in a set of diameters at most $C_3c_n^{-1}$. Taking $\theta\leq C_3^{-1}$, by $\text{e}^x\leq 1+x+x^2$ on $|x|\leq 1$, we have
    \begin{align*}
        \e(\exp(\theta(Y_{n+1}-Y_n))\mid \mathcal{F}_{X_n})\leq &1+2\theta^{2}c_n^{-2}\e((X_{n+1}-X_{n})^2\mid \mathcal{F}_{X_n}))\\
        \leq &\exp\left(2\theta^{2}c_n^{-2}C_3^2\left(C_1c_nMn^{-1}+\frac{C_2}{1-C_1}\right)\right),
    \end{align*}
    which gives
    \begin{align*}
        \e(\exp(\theta Y_n)\mid \mathcal{F}_{X_N})\leq \exp\left(\theta Y_N+\sum\limits_{i=N}^{n-1}2\theta^{2}c_i^{-2}C_3^2\left(C_1c_iMi^{-1}+\frac{C_2}{1-C_1}\right)\right).
    \end{align*}
     By Markov's inequality, when $C_1>\frac{1}{2}$, 
    \begin{align}\label{C_1large}
        \p(\tau_M\leq n)\leq &\exp\left(-\theta(M-Y_N)+4\theta^2C_3^2\left(M+\frac{C_2}{(1-C_1)(2C_1-1)}N\right)\right).
    \end{align}
     Taking $\theta=\frac{M-Y_N}{8C_3^2(M+\frac{C_2}{(1-C_1)(2C_1-1)}N)}$ in \eqref{C_1large} completes the proof of the lemma for $C_1>\frac{1}{2}$.    
    Similarly, when $C_1<\frac{1}{2}$,
    \begin{align}\label{C_1small}
        \p(\tau_M\leq n)\leq &\exp\left(-\theta(M-Y_N)+4\theta^2C_3^2\left(M+\frac{C_2}{(1-C_1)(1-2C_1)}N^{2C_1}n^{1-2C_1}\right)\right).
    \end{align}
    Taking $\theta=\frac{M-Y_N}{8C_3^2\left(M+\frac{C_2}{(1-C_1)(1-2C_1)}N^{2C_1}n^{1-2C_1}\right)}$ in \eqref{C_1small} completes the proof of the lemma for $C_1<\frac{1}{2}$.
\end{proof}
\begin{lemma}\label{me2}
    For constants $0<C_1<1, C_2,C_3\geq 0$, let $X_n$ be any random process satisfying 
    \begin{align}
        &\e(X_{n+1}\mid \mathcal{F}_{X_n})\geq (1+C_1n^{-1})X_n+C_2,\label{me2c1}\\
        &X_{n+1}=X_n \text{ or } X_n+1\leq X_{n+1}\leq X_n+C_3.\label{me2c2}
    \end{align}
    Where $\mathcal{F}_{X_n}$ is the sigma field generated by $\{X_i,i\leq n\}$. Then for any $n,M,N>0$, $C_1\neq \frac{1}{2}$,
    \begin{align*}
        &\p\left(\left.\exists \ N\leq i\leq n\text{~such~that~}  X_i\leq \frac{C_2}{1-C_1}i-M\prod\limits_{k=N}^{i-1}(1+C_1k^{-1}) \right|\mathcal{F}_{X_N}\right)\\
        \leq &2\exp\left(-\frac{\left(\max\left(0,M+\min\left(0,X_N-\frac{C_2}{1-C_1}N\right)\right)^2\right)}{32C_3^2\max\left(M,\frac{C_2}{(1-C_1)|1-2C_1|}N^{\min(1,2C_1)}n^{1-\min(1,2C_1)}\right)}\right).
    \end{align*}
\end{lemma}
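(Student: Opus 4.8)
The proof mirrors that of Lemma \ref{me} with the roles of super- and sub-martingale interchanged, so I would reuse the same objects. Set $c_n=\prod_{i=N}^{n}(1+C_1i^{-1})=(1+O(N^{-1}))(n/N)^{C_1}$ and $Y_n=c_{n-1}^{-1}\!\left(X_n-\tfrac{C_2}{1-C_1}n\right)$; the same computation as before, now invoking \eqref{me2c1}, gives $\e(Y_{n+1}\mid\mathcal{F}_{X_n})\ge Y_n$, so $Y_n$ is a submartingale. Since $c_{N-1}=1$ we have $Y_N=X_N-\tfrac{C_2}{1-C_1}N$, which is why the bound is stated through $X_N-\tfrac{C_2}{1-C_1}N$. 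The target event is $\{\exists\,N\le i\le n:\ Y_i\le -M\}$; I would introduce the first-passage time $\sigma_M=\inf\{i\ge N:\ Y_i\le -M\}$, so that this event is $\{\sigma_M\le n\}=\{Y_{n\wedge\sigma_M}\le -M\}$.

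The exponential step is identical with $\theta$ replaced by $-\theta$. For $\theta\in(0,C_3^{-1}]$, conditioned on $\mathcal{F}_{X_n}$ the increment $Y_{n+1}-Y_n$ lies in an interval of length $\le C_3c_n^{-1}$, so $e^x\le 1+x+x^2$ applies; the linear term drops because $Y_n$ is a submartingale, leaving
\begin{align*}
\e\!\left(e^{-\theta(Y_{n+1}-Y_n)}\mid\mathcal{F}_{X_n}\right)\le 1+2\theta^2c_n^{-2}\,\e\!\left((X_{n+1}-X_n)^2\mid\mathcal{F}_{X_n}\right).
\end{align*}
Using $0\le X_{n+1}-X_n\le C_3$ one has $(X_{n+1}-X_n)^2\le C_3(X_{n+1}-X_n)$, so the conditional variance is controlled once the conditional mean of the increment is; granting the bound $\e((X_{n+1}-X_n)^2\mid\mathcal{F}_{X_n})\le C_3^2\big(C_1c_nMn^{-1}+\tfrac{C_2}{1-C_1}\big)$ for $n<\sigma_M$ (the same estimate used in Lemma \ref{me}) and iterating up to $\sigma_M$,
\begin{align*}
\e\!\left(e^{-\theta Y_{n\wedge\sigma_M}}\mid\mathcal{F}_{X_N}\right)\le \exp\!\left(-\theta Y_N+4\theta^2C_3^2\sum_{i=N}^{n-1}c_i^{-2}\Big(C_1c_iMi^{-1}+\tfrac{C_2}{1-C_1}\Big)\right).
\end{align*}

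Then I would bound the sum exactly as in Lemma \ref{me}: $c_i\asymp(i/N)^{C_1}$ gives $\sum_i c_i^{-1}i^{-1}\lesssim 1$ and $\sum_i c_i^{-2}\lesssim |1-2C_1|^{-1}N^{\min(1,2C_1)}n^{1-\min(1,2C_1)}$ (splitting according to whether $2C_1\gtrless1$, which is where $C_1\ne\tfrac12$ is used), so the bracket is $\lesssim M+V$ with $V:=\tfrac{C_2}{(1-C_1)|1-2C_1|}N^{\min(1,2C_1)}n^{1-\min(1,2C_1)}$. Since $\{Y_{n\wedge\sigma_M}\le -M\}\subseteq\{e^{-\theta Y_{n\wedge\sigma_M}}\ge e^{\theta M}\}$, Markov's inequality gives $\p(\sigma_M\le n\mid\mathcal{F}_{X_N})\lesssim\exp\!\big(-\theta(M+Y_N)+O(\theta^2C_3^2\max(M,V))\big)$, and optimizing over $\theta\in(0,C_3^{-1}]$ in the two regimes — the unconstrained minimizer when it is $\le C_3^{-1}$, and $\theta=C_3^{-1}$ otherwise — yields a bound of the form $\exp\!\big(-c\,(M+Y_N)_+^2/(C_3^2\max(M,V))\big)$. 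Absorbing constants into the $32$, weakening $(M+Y_N)_+$ to $\big(M+\min(0,Y_N)\big)_+$ (harmless when $Y_N\ge0$), and picking up a factor $2$ from the union bound described next, this is the claimed inequality.

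The genuinely new point, and the main obstacle, is supplying the input to the second display. For the \emph{upper} deviation of Lemma \ref{me}, remaining below the barrier before $\tau_M$ automatically yields $X_i\le c_iM+\tfrac{C_2}{1-C_1}i$, and then \eqref{mec1} bounds $\e(X_{i+1}-X_i\mid\mathcal{F}_{X_i})$. For the \emph{lower} deviation treated here, $\sigma_M$ only provides the matching \emph{lower} barrier, so the upper barrier on $X_i$ must be obtained separately: either from the companion upper-drift bound on $X$ that is available in each application of this lemma (equivalently, by applying Lemma \ref{me} to the same process and taking a union bound over the event that $X$ exceeds that barrier — the source of the factor $2$), or, failing that, from the crude estimate $\e((X_{i+1}-X_i)^2\mid\mathcal{F}_{X_i})\le C_3^2$, which still produces a bound of the stated shape. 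Once this barrier is in place, everything else is the verbatim computation of Lemma \ref{me} with $\theta\mapsto-\theta$; the only other care needed is the constant bookkeeping in the $\theta$-optimization for $C_1$ close to (but not equal to) $\tfrac12$.
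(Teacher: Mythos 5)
Your overall strategy is the paper's: flip the sign, rerun the exponential supermartingale computation of Lemma \ref{me}, control the increment variance through a barrier on the opposite side, and pay a factor $2$ for the event that the process crosses that opposite barrier, which is estimated by Lemma \ref{me}. You have also correctly isolated the real difficulty: the lower-barrier stopping time gives no control of $\e((X_{i+1}-X_i)^2\mid\mathcal{F}_{X_i})$, because Lemma \ref{me2} only assumes a \emph{lower} bound on the drift. However, neither of your proposed resolutions closes this gap for the lemma as stated. Applying Lemma \ref{me} ``to the same process'' needs the upper drift hypothesis \eqref{mec1}, which is not assumed here; and appealing to ``the companion upper-drift bound available in each application'' proves something about the applications, not the lemma. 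The paper supplies the missing step by a preliminary reduction: it constructs a thinned coupling $X_n'$ with $X_N'=\min\left(X_N,\frac{C_2}{1-C_1}N\right)$, exact drift $\e(X_{n+1}'\mid\mathcal{F}_{X_n})=(1+C_1n^{-1})X_n'+C_2$, and $X_n'\le X_n$ for all $n\ge N$, so that lower-barrier crossings of $X$ imply those of $X'$, and $X'$ satisfies both \eqref{mec1} and \eqref{me2c1}. Only then does it stop at $|Y_n|\ge M$, bound the wrong-side exit by Lemma \ref{me}, and obtain the pre-stopping drift bound $\p(X_{n+1}'>X_n')\le C_1c_nMn^{-1}+\frac{C_2}{1-C_1}$ that feeds the variance estimate; note that this inequality itself uses the exact-drift property, not just \eqref{me2c1}. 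This reduction is also what produces the $\min\left(0,X_N-\frac{C_2}{1-C_1}N\right)$ in the numerator of the stated bound.

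Your fallback, the crude estimate $\e((X_{i+1}-X_i)^2\mid\mathcal{F}_{X_i})\le C_3^2$, does not rescue the argument: it replaces the variance sum by roughly $\frac{C_3^2}{|1-2C_1|}N^{\min(1,2C_1)}n^{1-\min(1,2C_1)}$, losing the factor $\frac{C_2}{1-C_1}$ appearing in the denominator of the claimed inequality. This is not merely a constant: in the paper's uses one has $C_2=(1+\delta)mp$ with $p$ possibly small, and the loss would degrade tails of type $\exp(-c\varepsilon^2 pn)$ (as needed, e.g., in Lemma \ref{lem2.12}) to type $\exp(-c\varepsilon^2 p^2 n)$, so neither the lemma as stated nor its later applications would follow. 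With the coupling reduction inserted at the start, the remainder of your outline matches the paper's proof.
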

\begin{proof}
    We only need to prove Lemma \ref{me2} for the case where $X_n$ satisfies
    \begin{align}
        &\e(X_{n+1}\mid \mathcal{F}_{X_n})= (1+C_1n^{-1})X_n+C_2, \ X_N\leq \frac{C_2}{1-C_1}N. \label{repeatxn}
    \end{align}
    Or we can construct a random process $X_n'$ based on $X_n$ such that $X_N'=\min(X_N,\frac{C_2}{1-C_1}N)$ and 
    \begin{align*}
        &\p(X_{n+1}'-X_n'=X_{n+1}-X_n|\mathcal{F}_{X_{n+1}})=\frac{C_2+C_1n^{-1}X_n'}{\e(X_{n+1}-X_n\mid \mathcal{F}_{X_n})},\\
        &\p(X_{n+1}'-X_n'=0|\mathcal{F}_{X_{n+1}})=1-\frac{C_2+C_1n^{-1}X_n'}{\e(X_{n+1}-X_n\mid \mathcal{F}_{X_n})}.
    \end{align*}
     Then $X_n'$ satisfies \eqref{repeatxn} and $X_n'\leq X_n$ for all $n\geq N$. So we only need to prove Lemma \ref{me2} for $X_n'$.
    
    The rest of the proof is similar to that of Lemma \ref{me}. Define $c_n=\prod\limits_{i=N}^{n}(1+C_1i^{-1})=(1+O(N^{-1}))(\frac{n}{N})^{C_1}$, $Y_n=c_{n-1}^{-1}(\frac{C_2}{1-C_1}n-X_n)$, then
    \begin{align*}
        \e(Y_{n+1}\mid \mathcal{F}_{X_n})&= c_n^{-1}\left(-(1+C_1n^{-1})X_n-C_2+\frac{C_2}{1-C_1}(n+1)\right)\\
        &=c_{n-1}^{-1}\left(\frac{C_2}{1-C_1}n-X_n\right)=Y_n,
    \end{align*}
    so $Y_n$ is a martingale. Now we want to make an exponential bound. Suppose $\tau_{M}=\inf\{n\geq N\mid |Y_n|\geq M\}$. By Lemma \ref{me},
    \begin{align*}
    \p(\exists \ N\leq i\leq n, \ Y_i\leq -M)\leq \exp\left(-\frac{\left(\min\left(M,M+X_N-\frac{C_2}{1-C_1}N\right)^2\right)}{32C_3^2\max\left(M,\frac{C_2}{(1-C_1)|1-2C_1|}N^{\min(1,2C_1)}n^{1-\min(1,2C_1)}\right)}\right).
    \end{align*}
    By \eqref{me2c2} we have 
    \begin{align*}
        \p(X_{n+1}>X_n)\leq C_2+C_1n^{-1}X_n= -C_1c_nY_nn^{-1}+\frac{C_2}{1-C_1}.
    \end{align*}
    Notice that $X_n\leq X_{n+1}\leq X_n+C_3$. Thus, conditioned on $\mathcal{F}_{X_n}$, $Y_{n+1}$ takes values in a set of diameters at most $C_3c_n^{-1}$. Taking $\theta\leq C_3^{-1}$, by $\text{e}^x\leq 1+x+x^2$ on $|x|\leq 1$, we have
    \begin{align*}
        \e(\exp(\theta(Y_{n+1}-Y_n))\mid \mathcal{F}_{X_n})\leq &1+2\theta^{2}c_n^{-2}\e((X_{n+1}-X_{n})^2\mid \mathcal{F}_{X_n})\\
        \leq &1+2\theta^{2}c_n^{-2}C_3^2\left(C_1c_nMn^{-1}+\frac{C_2}{1-C_1}\right).
    \end{align*}
    By the same argument as in Lemma \ref{me},
    \begin{align*}
        \p(\tau_M\leq n, Y_{\tau_M}\geq M)\leq \exp\left(-\frac{\left(\min\left(M,M+X_N-\frac{C_2}{1-C_1}N\right)^2\right)}{32C_3^2\max\left(M,\frac{C_2}{(1-C_1)|1-2C_1|}N^{\min(1,2C_1)}n^{1-\min(1,2C_1)}\right)}\right).
    \end{align*}
    which completes the proof of the lemma.
\end{proof}

In this section, we apply Lemmas \ref{me} and \ref{me2} to establish the concentration of $L(n)$ by setting $X_i = L_n(i)$. Note that when a new vertex is added, $L_n(i)$ increases by at most $m(2+\delta)$. In the following lemma, we choose the appropriate constants $C_1$ and $C_2$ so that $X_i$ satisfies the conditions of Lemmas \ref{me} and \ref{me2}, under the assumption that all vertices near $V_n$ obey the concentration bounds of Lemmas \ref{lem2.3} and \ref{lem2.4}. Specific details will be discussed later in this section.

We only care about the vertices near the new vertex; however, we cannot know in advance which vertices these are before the new vertex is placed. Therefore, we use the following lemma to bound the probability that all vertices near the new vertex satisfy the previous concentration bounds.
\begin{lemma}\label{lem2.7}
Let $A_i\in \mathcal{F}_{n-1},1\leq i\leq n-1$ be events satisfying $\max\limits_{1\leq i\leq n-1}(\p(A_i))\leq c$ for a constant $c\in (0,1)$, then for any integer $k\geq 1$,
\begin{align*}
    \p(\cup_{d_n(\V i,\V n)\leq k}A_i)\leq (2pn+2)^{k}c(-\log(c)+1).
\end{align*}
\end{lemma}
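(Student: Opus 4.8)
The plan is to condition on the positions of $\V 1,\dots,\V{n-1}$ and the edges among them (i.e. on $\mathcal F_{n-1}$), so that each event $A_i$ becomes a fixed event, and then take the expectation over the position of $\V n$, which is uniform on $S$. The key combinatorial input is the deterministic ball-growth bound \eqref{rggrecursion}: once $\mathcal F_{n-1}$ is fixed, $A(S_{i,n-1}^{j+1})\le p\,G_{n-1}(S_{i,n-1}^{j})$, and since every vertex count $G_{n-1}(\cdot)$ is at most $n$, we get $A(S_{i,n-1}^{j})\le p(pn)^{\,j-1}\le (pn)^j$ crudely — but this is too lossy. A cleaner route: the event $\{d_n(\V i,\V n)\le k\}$ means $\V n$ lies in the $k$-ball (in the RGG metric on $\gV{n-1}\cup\{\V n\}$) around $\V i$, which forces $\V n\in S_{i,n-1}^{k}$ after we note that $\V n$ adds at most one vertex to the chain. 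So conditionally on $\mathcal F_{n-1}$,
\[
  \p\bigl(\textstyle\bigcup_{d_n(\V i,\V n)\le k}A_i \,\big|\, \mathcal F_{n-1}\bigr)
  \;\le\; \sum_{i:\,A_i \text{ holds}} A\bigl(S_{i,n-1}^{k}\bigr)\wedge 1 .
\]

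The main step is then to control the right-hand side. I would order the sum over $i$ by the ``generation'' of $\V i$ relative to an arbitrary fixed reference, but more directly: fix any vertex and use that the number of vertices $\V i$ with $A_i$ holding and lying within RGG-distance $k$ of a given point $x$ is, in expectation over $x$, bounded by $\sum_{i}A(S_{i,n-1}^k)\mathbf 1_{A_i}$, and by the symmetry $x\in S_{i,n-1}^k \iff \V i\in S_{x,n-1}^k$ this equals $\e_x\!\bigl[\#\{i: A_i,\ \V i\in S_{x,n-1}^{k}\}\bigr]$. Now $\#\{i:\V i\in S_{x,n-1}^k\}=G_{n-1}(S_{x,n-1}^k)$, so the whole probability is at most $\e_x[\,G_{n-1}(S_{x,n-1}^k)\wedge 1\cdot c\cdot(\text{something})\,]$ — this is where I must be careful, because I want the factor $c(-\log c+1)$, not $c$ times a count. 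The trick is Lemma \ref{sijexpbound}: it gives an exponential moment bound $\e\exp((2pn+2)^{-(k-1)}G_n(S_{\cdot}^k))\le \exp(2pn+1)$, equivalently a tail $\p(G_{n-1}(S_{x,n-1}^k)\ge t)\le \exp(-(2pn+2)^{-(k-1)}t+2pn+1)$. Splitting on whether $G_{n-1}(S_{x,n-1}^k)$ exceeds the threshold $t_0:=(2pn+2)^{k-1}(2pn+1+\log(1/c))$ or not: on the low-count event the union bound over the $\le t_0$ relevant $A_i$'s costs $c\,t_0$, and on the high-count event the probability is at most $c$ by the tail bound with that choice of $t_0$. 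Thus the total is at most $c(t_0+1)\lesssim (2pn+2)^{k-1}\cdot c\,(2pn+2)(-\log c+1)=(2pn+2)^{k}c(-\log c+1)$, which is exactly the claimed bound up to absorbing constants into the stated inequality.

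The main obstacle I anticipate is the bookkeeping in the ``split on the count'' step: I need the threshold $t_0$ chosen so that both the union-bound contribution $c\,t_0$ and the Lemma \ref{sijexpbound} tail contribution land under the target $(2pn+2)^k c(-\log c+1)$ simultaneously, and I must make sure the $\wedge 1$ truncations and the $+1$'s line up so the constants stay clean. A secondary technical point is justifying that $\{d_n(\V i,\V n)\le k\}\subseteq\{\V n\in S_{i,n-1}^{k}\}$ once the new vertex is inserted (the new vertex can shorten RGG-paths, but only by one edge at the very end, and it is itself the endpoint, so no genuine shortcut through $\V n$ helps a path that must terminate at $\V n$); this is where the asymmetry between $d_n$ defined on $S$ and the vertex-set metric matters, and I would spell it out carefully. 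Everything else — conditioning, Fubini over the position of $\V n$, and the final arithmetic — is routine.
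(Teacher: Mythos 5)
Your reduction is sound up to the point where you must bound $\e_x[N_x]$ with $N_x=\#\{i:A_i \text{ holds},\ \V i\in S_{x,n-1}^k\}$ (equivalently, the paper's $\sum_i\e[\mathbf 1_{A_i}A(S_{i,n}^k)]$), and your treatment of the high-count event $\{G_{n-1}(S_{x,n-1}^k)>t_0\}$ via Lemma \ref{sijexpbound} is fine. The gap is the low-count step: ``union bound over the $\le t_0$ relevant $A_i$'s costs $c\,t_0$'' is a union bound over a \emph{random} index set $J_x=\{i:\V i\in S_{x,n-1}^k\}$ that is correlated with the events $A_i$. The inequality $\e[\sum_{i\in J}\mathbf 1_{A_i}]\le \e[|J|]\cdot\max_i\p(A_i)$ is false in general when $J$ is not independent of the $A_i$'s: $J$ can preferentially contain exactly those indices where $A_i$ holds (take the $A_i$ independent with $\p(A_i)=c$ and $J$ any size-$t_0$ subset of $\{i:A_i\}$; then the left side is about $\min(nc,t_0)$, which can be $t_0\gg c\,t_0$). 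In the present setting $J_x$ is determined by the vertex positions, and the hypothesis only says $A_i\in\mathcal F_{n-1}$ with $\p(A_i)\le c$ --- the positions are part of $\mathcal F_{n-1}$, so the $A_i$ may depend on them arbitrarily (and in the actual applications, e.g.\ Lemma \ref{lem2.8}, they do). So nothing licenses charging only $c$ to each ``relevant'' index.

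The missing ingredient is the worst-case-coupling (layer-cake) bound $\e[\mathbf 1_A X]\le\int_0^\infty\min(\p(A),\p(X\ge s))\,ds$, applied separately for each $i$ with $A=A_i$ and $X=A(S_{i,n}^k)\le p\,G_n(S_{i,n}^{k-1})$; this is exactly what the paper does, and it is where the factor $c(-\log(c)+1)$ comes from, since $\int_0^\infty\min(c,e^{-s/T+B})\,ds\approx cT(B+\log(1/c)+1)$. Equivalently, a single-threshold split is valid \emph{per index}: $\e[\mathbf 1_{A_i}X]\le t_0\,\p(A_i)+\e[X\mathbf 1_{\{X>t_0\}}]$, because for a fixed $i$ the truncation at $t_0$ commutes with the indicator; it is only your aggregated split on $G_x$, combined with a union bound over the random set $J_x$, that fails. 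Replacing that step by the per-$i$ estimate and summing over $i\le n$ turns your argument into the paper's proof.
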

\begin{proof}
By Lemma \ref{sijexpbound}, $\p(G(S_{i,n}^k)\geq t(2pn+2)^k)\leq \text{e}^{-(t-1)(2pn+2)}$. And 
\begin{align*}
    \e(\mathbf{1}_{A_i}\mathbf{1}_{\{d_n(i,n)\leq k\}})
    &\leq \e(\mathbf{1}_{A_i}A(S_{i,n}^k))\\
    &\leq \e(\mathbf{1}_{A_i}pG(S_{i,n}^{k-1}))\\
    &\leq \int_0^{\infty}p\min(\p(A_i),\exp(\frac{-x}{(2pn+2)^{k-2}}+2pn+2))dx\\
    &\leq (2pn+1)^{k-1}pc+(2pn+1)^{k-2}pc(-\log(c)+1).
\end{align*}
Finally, with a union bound, we get 
    \begin{align*}
        \p(\cup_{d_n(i,n)\leq k}A_i)\leq \sum\limits_{1\leq i\leq n} \e(\mathbf{1}_{A_i}\mathbf{1}_{\{d_n(i,n)\leq k\}}) \leq (2pn+2)^{k}c(-\log(c)+1).\qedhere
    \end{align*}
\end{proof}
\begin{lemma}\label{lem2.8}
    There exist constants $C_1,C_2>0, 0<C_3\leq 1$ such that for any $p,n>0$, $m\in \mathbb{N}$,
    \begin{align}
        \p(L(n)\leq 2C_1mpn)\geq 1-2\exp(-C_2(pn)^{C_3}).\label{eqlem2.8}
    \end{align}
     In particular, when the dimension $d$ is finite and fixed, then \eqref{eqlem2.8} holds for $C_3=1$.
\end{lemma}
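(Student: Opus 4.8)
The plan is to apply Lemma~\ref{me} to the process $X_i:=L_n(i)$, $i\ge1$, on a window $[N_0,n]$. First dispose of the trivial regime: if $pn$ is below a large absolute constant then $1-2\exp(-C_2(pn)^{C_3})\le0$ and there is nothing to prove, so assume $pn$ is large, and condition on the position of $\V n$ throughout. The process $X_i$ is nondecreasing, each new vertex raises it by at most $m(2+\delta)$, and any nonzero jump is at least $\min(1,m(1+\delta))\ge1$, so the path hypothesis~\eqref{mec2} holds with step bound $m(2+\delta)$. Put $N_0:=\lceil p^{-1}(pn)^{C_3}\rceil$ for a constant $C_3\in(0,1)$ fixed at the end; then $2\le N_0<n$ because $C_3<1$ and $pn$ is large.

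The crux is the drift hypothesis~\eqref{mec1}. Split $X_{i+1}-X_i$ into the weight of $\V{i+1}$ (nonzero only if $\V{i+1}\in B(\V n,r)$) plus the number $E$ of edges $e_{i+1,t}$ that attach inside $B(\V n,r)\setminus\{\V{i+1}\}$; since an edge of $\V{i+1}$ can only land within distance $r$ of $\V{i+1}$, this happens exactly in the overlap $O:=B(\V n,r)\cap B(\V{i+1},r)$. On the good event $\mathcal E$ defined below, the denominator in~\eqref{edgeconnectprob} is at least $(1+\delta-\varepsilon)mpi$ for a fixed small $\varepsilon\in(0,\delta)$; writing $A:=\sum_{\V k\in O,\,k\le i}\wdb{k}{i}$ for the base weight of $O$ and iterating over the $m$ sub-steps (the weight of $O$ only grows, and each growth also inflates the denominator) gives
\begin{equation*}
    \e[E\mid\mathcal G_{i,n},\,\V{i+1}]\le A\Big(\big(1+\tfrac1{(1+\delta-\varepsilon)mpi}\big)^m-1\Big)\le\frac{A}{(1+\delta-\varepsilon)pi}\bigl(1+O((pn)^{-C_3})\bigr).
\end{equation*}
Averaging over the uniform position of $\V{i+1}$ and using the Fubini identity $\e_{\V{i+1}}[A]=p\sum_{\V k\in B(\V n,r),\,k\le i}\wdb{k}{i}=p\,X_i$, the edge part contributes $\le\frac{1+o(1)}{1+\delta-\varepsilon}\cdot\frac{X_i}{i}$ — \emph{purely to the multiplicative coefficient}, with no additive term — while the $\V{i+1}$-weight part contributes $\le m(1+\delta)p+O(i^{-1})$ (the $O(i^{-1})$ being $p$ times the $o(1)$ expected number of self-loops of $\V{i+1}$), and $O(i^{-1})=O(p)$ since $i\ge N_0\ge p^{-1}$. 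Hence on $\mathcal E$, for all $i\ge N_0$ and $pn$ large,
\begin{equation*}
    \e[X_{i+1}\mid\mathcal G_{i,n}]\le\Big(1+\frac{\alpha_0}{i}\Big)X_i+\beta p,\qquad\alpha_0:=\frac1{1+\delta-2\varepsilon}<1,
\end{equation*}
for a constant $\beta$, and it is exactly here that the standing assumption $\delta>0$ enters. So $X$ obeys the hypotheses of Lemma~\ref{me} with $C_1=\alpha_0$, $C_2=\beta p$, and its deterministic ``fixed point'' $\frac{\beta p}{1-\alpha_0}i$ is $\Theta(mpi)$.

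For the good event, take $\mathcal E$ to be the event that $L(k)\ge(1+\delta-\varepsilon)mp(k-1)+m(2+\delta)$ for every vertex $\V k$ with $N_0\le k\le n$ and $d_n(\V k,\V n)\le2$ — exactly the vertices that can make $E$ or the self-loop count nonzero. By Lemma~\ref{lem2.3} each such failure has probability $\le\exp(-cpN_0)$, so Lemma~\ref{lem2.7} with $k=2$ gives $\p(\mathcal E^c)\le(2pn+2)^2\exp(-cpN_0)(cpN_0+1)\le\exp(-c'(pn)^{C_3})$ for $pn$ large. Since the drift bound holds only on the non-adapted event $\mathcal E$, run Lemma~\ref{me} instead for the standard modified/stopped process that agrees with $X$ on $\mathcal E$ and genuinely obeys~\eqref{mec1}--\eqref{mec2}, paying the additive error $\p(\mathcal E^c)$. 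For the starting value, the argument of Lemma~\ref{lem2.4} applied to the ball around the known point $\V n$ at time $N_0$, together with Lemma~\ref{sijexpbound}, gives $X_{N_0}=L_n(N_0)\le20(2+\delta)m(pN_0)^2$ off an event of probability $\le20\exp(-pN_0)\le\exp(-c(pn)^{C_3})$. Feeding $M\asymp m(pN_0)^2$ into Lemma~\ref{me} and using $\prod_{k=N_0}^{n-1}(1+\alpha_0k^{-1})=\Theta\bigl((n/N_0)^{\alpha_0}\bigr)$, the initial-value contribution to $X_n$ is $\Theta\bigl(m(pN_0)^2(n/N_0)^{\alpha_0}\bigr)=\Theta\bigl(m(pn)^{C_3(2-\alpha_0)+\alpha_0}\bigr)$, which is $O(mpn)$ iff $C_3(2-\alpha_0)+\alpha_0\le1$; I therefore fix $C_3:=\frac{1-\alpha_0}{2-\alpha_0}=\frac{\delta-2\varepsilon}{1+2(\delta-2\varepsilon)}\in(0,1)$ (any smaller positive value works too). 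Checking that the tail probability furnished by Lemma~\ref{me} itself is $\exp(-\Omega((pn)^{2C_3}))$ and collecting the three bad events yields $X_n\le C_\ast mpn$ off an event of probability $\le2\exp(-C_2(pn)^{C_3})$; since $L(n)=L_n(n-1)+m(2+\delta)\le X_n+m(2+\delta)$, this gives~\eqref{eqlem2.8} with $C_1:=C_\ast$.

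Finally, when $d$ is finite and fixed, balls on $S$ are doubling, $A(B(x,2r))\le C_d\,p$ with $C_d$ depending only on $d$; since $S_{n,n}^2\subseteq B(\V n,2r)$, the count $G_n(S_{n,n}^2)$ is stochastically dominated by $1+\mathrm{Bin}(n,C_dp)$, so a Chernoff bound gives $G_n(S_{n,n}^2)\le2C_dpn$ with probability $\ge1-\exp(-cpn)$, and $L(n)\le(2+\delta)m\,G_n(S_{n,n}^2)$ then yields~\eqref{eqlem2.8} with $C_3=1$ (equivalently, this geometric input upgrades the initial bound from $O(m(pN_0)^2)$ to $O(mpN_0)$ in the scheme above, relaxing the admissible range of $C_3$ from $\frac{1-\alpha_0}{2-\alpha_0}$ to $1$). \textbf{The main obstacle} is the drift estimate of the second paragraph: pushing the multiplicative coefficient strictly below $1$ forces one to (a) use the Fubini cancellation rather than the crude $\p(D(\V n,\V{i+1})\le2r)=A(B(\V n,2r))$, which is \emph{not} $O(p)$ in high dimension, and (b) handle the within-step weight updates by a recursion so they stay multiplicative and never leak into an additive term; after that, making the drift legitimate on the non-adapted event $\mathcal E$ and tuning $N_0$ so that the damped quadratic initial term sits below $mpn$ while all three error probabilities remain $\exp(-\Omega((pn)^{C_3}))$ is bookkeeping.
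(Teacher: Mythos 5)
Your proposal is correct and follows essentially the same route as the paper's proof: the stopped process $X_i=L_n(i)$ fed into Lemma \ref{me}, a good event built from the rough lower bound of Lemma \ref{lem2.3} localized to the $2$-neighborhood of $\V n$ via Lemma \ref{lem2.7}, the quadratic initial bound from Lemma \ref{lem2.4}, the Fubini cancellation of $p$ in the drift, and the binomial domination of $G_n(S_{n,n}^2)$ for fixed $d$. The only deviations are cosmetic choices of constants (your $\alpha_0\approx\tfrac{1}{1+\delta}$ and $C_3=\tfrac{1-\alpha_0}{2-\alpha_0}$ versus the paper's $\tfrac{2}{2+\delta}$ and $\tfrac{\delta}{3+6\delta}$), which do not affect the conclusion.
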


\begin{proof}
    Take $N=\lceil p^{-1}(pn)^{\frac{\delta}{3+6\delta}}\rceil$, $X_i=L_n(i)$. Define the bad event 
    \begin{align*}
        A&:=\{ L_n(N-1)\leq 20(2+\delta)m(pN)^2\}\\
        B_j&:=\{\forall \ N\leq i\leq j,\ d_i(\V i,\V n)\leq 2,\ L(i)\geq (1+\delta /2)mp(i-1)+m(2+\delta)\}.
    \end{align*}
    Recall that $\mathcal{G}_{i,n}$ is the sigma field generated by $\mathcal{F}_i$ and the position of $\V n$, then $B_j\in \mathcal{G}_{j,n}$. Note that conditioned on $\mathcal{F}_{N-1}$, $L_n(N-1)$ has the same distribution as $L(N)-m(2+\delta)$. Then by Lemmas \ref{lem2.3}, \ref{lem2.4} and \ref{lem2.7},
    \begin{align*}
        &\p(B_{n-1}^c)\leq (2pn+2)^3\exp\left(-\frac{\delta^2}{8(1+\delta)^2}pN\right),\\
        &\p(A^c)\leq 20\exp(-pN).
    \end{align*}
    Define $\tau=\inf\{s\geq N|B_s^c\}$, $\tau_i=\min(\tau,i)$ and $X_i:=L_n(\tau_{i}-1)$, then $X_i\in \mathcal{G}_{i-1,n}$, when $i\leq n-2$, for a new vertex $\V{i+1}$ we have
    \begin{align*}
        \p(\V{i+1}\in S_{n,i}^1\mid \mathcal{G}_{i,n})&=p,\\
        \p(v_{i+1,s}\in S_{n,i}^{1},B_{i+1}\mid \mathcal{G}_{i,n})&=\sum\limits_{\substack{1\leq j\leq i\\\V j\in S_n^1}}\e\left(\frac{\wda{j}{i+1}{s-1}}{L(i+1)-m+s-1}\mathbf{1}_{\{D(\V j,\V {i+1})\leq r\}}\mathbf{1}_{B_{i+1}}\mid \mathcal{G}_{i,n}\right)\\
        &=\sum\limits_{\substack{1\leq j\leq i\\\V j\in S_n^1}}\e\left(\frac{\wdb{j}{i}}{L(i+1)-m}\mathbf{1}_{\{D(\V j,\V {i+1})\leq r\}}\mathbf{1}_{B_{i+1}}\mid \mathcal{G}_{i,n}\right)\\
        &\leq \sum\limits_{\substack{1\leq j\leq i\\\V j\in S_n^1}}\frac{\wdb{j}{i}}{(1+\delta /2)m(i+1)}=\frac{L_n(i)}{(1+\delta /2)m(i+1)}.
    \end{align*}        
    Therefore, we can give a recursion for $X_i$ in the form of Lemma \ref{me}: 
    \begin{align*}
        &\e(X_{i+1}\mid \mathcal{G}_{i-1,n})\leq \left(1+\frac{1}{(1+\delta /2)i}\right)X_i+(1+\delta)mp,\\
        &X_{i+1}=X_i \text{ or }X_i+1\leq X_{i+1}\leq X_i+m(2+\delta).
    \end{align*}
    Take $M=\left(C_1-\frac{(1+\delta)(2+\delta)}{\delta}\right)mpn^{\frac{\delta}{2+\delta}}N^{\frac{2}{2+\delta}}$ in Lemma \ref{me}, 
    \begin{align*}
        \p(L(n)\geq 2C_1mpn)&\leq \p(B_{n-1}^c)+\p(X_n\geq 2C_1mpn-m(2+\delta))\\
        &\leq \p(B_{n-1}^c)+\p(A)+\exp\left(-\frac{(M-20(2+\delta)(pN)^2)^2}{32m^2(2+\delta)^2M}\right).
    \end{align*}
    For $C_1$ and $pn$ sufficiently large, the right side can be dominated by $2\exp(-C_2(pn)^{C_3})$.
        
    When the dimension $d$ is finite and fixed, then $L(n)\leq (2+\delta)m(1+G_n(S_{n,n}^2))$ and $G_n(S_{n,n}^k)$ (note that $S_{n,n}^k \subset B(n,kr)$) can be stochastically dominated by a binomial distribution $B(n,k^{d} p)$. This immediately gives \eqref{eqlem2.8} for $C_3=1$. 
\end{proof}

\begin{lemma}\label{lem3.8}
    Suppose that there exist constants $C_1,C_2,C_3,C_4>0$ such that $1<C_1<2+\delta <C_2, C_4\leq 1$ and $\p(C_1mpn+m\leq L(n)\leq C_2mpn)\geq 1-2\exp(-C_3(pn)^{C_4})$ for any $p,n>0$. Then for any $\varepsilon>0$, there exists a constant $C_3'>0$ satisfying for any $p,n>0$,
    \begin{align*}
        &\p\left(\left((1+\delta)\frac{C_2}{C_2-1}-\varepsilon\right)mpn+m\leq L(n)\leq \left((1+\delta)\frac{C_1}{C_1-1}+\varepsilon\right)mpn\right)\\
        \geq &1-2\exp(-C_3'(pn)^{C_4}).
    \end{align*}  
\end{lemma}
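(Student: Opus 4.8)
The plan is to run the bootstrapping scheme of Lemma~\ref{lem2.8}: condition on the position of $V_n$, follow the weight $L_n(i)$ seen around $V_n$ through a stopping time, and feed the resulting process into Lemmas~\ref{me} and~\ref{me2}, with the hypothesis ``$C_1mpn+m\le L(n)\le C_2mpn$'' now playing the role that Lemmas~\ref{lem2.3} and~\ref{lem2.4} played there. The target constants are fixed points: writing $g(c):=(1+\delta)\tfrac{c}{c-1}$, a recursion with multiplicative coefficient $\tfrac{1}{Ci}$ and additive term $(1+\delta)mp$ has equilibrium $\tfrac{(1+\delta)mp}{1-1/C}=g(C)mp$; moreover $g(2+\delta)=2+\delta$ and $|g'(2+\delta)|=\tfrac1{1+\delta}<1$, so $g$ is a decreasing contraction at $2+\delta$ and iterating this lemma drives $(C_1,C_2)\to(2+\delta,2+\delta)$, which will give Theorem~\ref{thm2.1}. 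We may assume $C_1\ne2\ne C_2$ (Lemmas~\ref{me},~\ref{me2} exclude the value $\tfrac12$): replace $C_1$ by $C_1-\eta$ and $C_2$ by $C_2+\eta$, which still satisfy the hypothesis, let $\eta\downarrow0$, and absorb the change of $g$ into $\varepsilon$.

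\textbf{Set-up.} Fix a small $\kappa=\kappa(\varepsilon,C_1,C_2,m,\delta)>0$ chosen last, set $N:=\lceil\kappa n\rceil$, and work with the fields $\mathcal{G}_{i,n}$ (i.e.\ condition on the position of $V_n$). For $N\le j\le n$ put
\[
B_j:=\bigl\{\,\forall\,N\le i\le j\text{ with }d_i(V_i,V_n)\le2:\ C_1mpi+m\le L(i)\le C_2mpi\,\bigr\},\qquad
A:=\bigl\{C_1mpN-m(1+\delta)\le L_n(N-1)\le C_2mpN\bigr\}.
\]
Since only vertices within RGG-distance $2$ of $V_n$ can change $L_n$, Lemma~\ref{lem2.7} with $k=2$ (applied to the upper- and lower-violation events, both in $\mathcal{F}_{n-1}$) and the hypothesis give $\p(B_{n-1}^c)\le C(2pn)^3\exp(-C_3(pN)^{C_4})$, while $\p(A^c)\le2\exp(-C_3(pN)^{C_4})$, using that $L_n(N-1)$ has the law of $L(N)-m(2+\delta)$ (the weight of the first $N-1$ vertices in a ball of area $p$ about a fresh uniform point). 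Because $N=\Theta(n)$ we have $(pN)^{C_4}=\Theta((pn)^{C_4})$, so the exponent $C_4$ is \emph{preserved}; this is why $N$ is taken proportional to $n$, unlike the $N\ll n$ of Lemma~\ref{lem2.8} which degraded it. Let $\tau:=\inf\{s\ge N:B_s^c\}$ and $X_i:=L_n(\min(\tau,i)-1)$; then $X_N=L_n(N-1)$, each $X_i$ is $\mathcal{G}_{i-1,n}$-measurable, and $L(n)=X_n+m(2+\delta)$ on $B_{n-1}$.

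\textbf{The one-step recursion.} Adding $V_i$ and its $m$ edges changes $L_n$ by
\[
L_n(i)-L_n(i-1)=\mathbf{1}_{\{D(V_i,V_n)\le r\}}\bigl(m(1+\delta)+\#\{\text{self-loops of }V_i\}\bigr)+E_i,
\]
where $E_i\le m$ counts the out-edges of $V_i$ hitting a previously-placed vertex in $B(V_n,r)$. Condition on $\mathcal{G}_{i-1,n}$ and on $\{i<\tau\}$. Each edge of $V_i$ is a self-loop with probability $\le\tfrac{m(2+\delta)}{L(i)-m}$, and the over-count from several edges of $V_i$ to one vertex inflates $\e[E_i\mid\cdot]$ only by a factor $1+O(\tfrac1{pi})$; both are negligible (their aggregate over $N\le i\le n$ is $O(\log\tfrac1\kappa)=O(1)$) because $pi\ge\kappa pn\to\infty$ and $X_i\ge X_N\approx(2+\delta)mpN\gg1$. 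The main terms use $\p(D(V_i,V_n)\le r)=p$, $W_i(i)\ge m(1+\delta)$, the identity $\e\bigl[\sum_{k\in B(V_n,r)\cap B(V_i,r),\,k<i}W_k(i-1)\mid\mathcal{G}_{i-1,n}\bigr]=p\,L_n(i-1)$ (each existing vertex of $B(V_n,r)$ falls within $r$ of the fresh point $V_i$ with probability exactly $p$), and $C_1mpi\le L(i)-m$, $L(i)\le C_2mpi$ on $B$; this yields, for $N\le i<n$ and $pn$ large,
\[
\bigl(1+\tfrac{1}{C_2i}\bigr)X_i+(1+\delta)mp\ \le\ \e\bigl(X_{i+1}\mid\mathcal{G}_{i-1,n}\bigr)\ \le\ \bigl(1+\tfrac{1+\eta}{C_1i}\bigr)X_i+(1+\delta)mp,
\]
with $0<1+\eta<\min(C_1,C_2)$ absorbing the corrections, and $X_{i+1}=X_i$ or $X_i\le X_{i+1}\le X_i+m(2+\delta)$ (the ``increment $\ge1$'' clause of~\eqref{mec2},~\eqref{me2c2} is never used in Lemmas~\ref{me},~\ref{me2} beyond the bound $C_3=m(2+\delta)$, so non-integer increments do no harm; for $i\ge\tau$ the frozen $X$ trivially obeys both drift inequalities). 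Applying Lemma~\ref{me} with $(C_1',C_2',C_3)=(\tfrac{1+\eta}{C_1},(1+\delta)mp,m(2+\delta))$ and $M$ of order $\varepsilon\kappa^{1/C_1}mpn$ (so $M\prod_{k=N}^{n-1}(1+C_1'k^{-1})\approx M(n/N)^{C_1'}\le\varepsilon mpn$) gives $X_n\le(g(C_1)+O(\varepsilon))mpn$ off a failure event; Lemma~\ref{me2} with $C_1'=\tfrac1{C_2}$ and $M$ of order $\varepsilon\kappa^{1/C_2}mpn$ gives $X_n\ge(g(C_2)-O(\varepsilon))mpn$ similarly; on $B_{n-1}$ these become the asserted bounds on $L(n)$. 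Finally, off $B_{n-1}$ or $A$ one bounds by $\p(B_{n-1}^c)+\p(A^c)\lesssim(2pn)^3\exp(-C_3(pN)^{C_4})$, and on $A$ the conditional-on-$\mathcal{F}_{X_N}$ bounds of Lemmas~\ref{me},~\ref{me2} are $\le\exp(-cM)=\exp(-c\varepsilon\kappa^{1/\max(C_1,C_2)}mpn)$ once $\kappa$ is small enough that $M$ dominates the denominator term in each exponent and $M-X_N+g(C_i)mpN\ge M/2$. Since $C_4\le1$ this is $\le\exp(-c'(pn)^{C_4})$ for $pn\ge1$, the prefactor $(2pn)^3$ is absorbed for $pn$ large, and for $pn$ small the conclusion is vacuous; a single small $C_3'>0$ (depending only on $\varepsilon,C_1,\dots,C_4,m,\delta$) then covers all $p,n>0$.

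\textbf{Main obstacle.} The one-step estimate is routine; the delicate parts are (i) preserving the stretched-exponential exponent $C_4$, which forces $N=\Theta(n)$ and hence a non-negligible but controlled initial value $X_N\in[\approx C_1mpN,\approx C_2mpN]$ to be threaded through the $M-X_N$ terms of Lemmas~\ref{me},~\ref{me2} (in Lemma~\ref{lem2.8}, $N\ll n$ let the exponent degrade and made $X_N$ quadratically small instead); and (ii) the bookkeeping around $\tau$ and $\mathcal{G}_{\cdot,n}$: checking that on $\{i<\tau\}$ the rough bound on $L(i)$ is available precisely for those $V_i$ that can move $L_n$, so the displayed recursion is a genuine inequality with no missing terms, and that the self-loop and multi-edge corrections are controlled purely by $pi\ge\kappa pn\to\infty$ (not requiring $p^2n\to\infty$). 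The identifications $L_n(N-1)\overset{d}{=}L(N)-m(2+\delta)$ and $L(n)=X_n+m(2+\delta)$ on $B_{n-1}$ are the glue that lets the hypothesis be invoked only at the single time $N$ rather than along the whole trajectory.
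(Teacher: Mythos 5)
Your proposal is correct and follows essentially the same route as the paper: take $N=\lceil cn\rceil$, condition on the position of $V_n$, freeze $L_n(\cdot)$ at the stopping time where the hypothesis bound on $L(i)$ first fails, derive the one-step drift inequalities with coefficients $\tfrac{1}{C_1 i}$ and $\tfrac{1}{C_2 i}$, and apply Lemmas \ref{me} and \ref{me2} with $M\asymp\varepsilon c^{1/C_i}mpn$, using $X_n'\le L(n)\le X_n+m(2+\delta)$ on the good event. Your additional remarks (perturbing to avoid the excluded value $\tfrac12$ in Lemmas \ref{me}--\ref{me2}, and the explicit observation that $N=\Theta(n)$ is what preserves the exponent $C_4$) are correct refinements of the same argument.
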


\begin{proof}
Define $D_1=(1+\delta)\frac{C_2}{C_2-1}-\varepsilon$, $D_2=(1+\delta)\frac{C_1}{C_1-1}+\varepsilon$. Take $c$ to be a small constant, $N=\lceil cn\rceil$, 
\begin{align*}
    A&:=\{  C_1mpN-m(2+\delta)\leq L_n(N-1)\leq C_2mpN\}\\
    B_j&:=\{\forall \ N\leq i\leq j,\ C_1mpi+m \leq L(i)\leq C_2mpi\}.
\end{align*}
Define $\tau=\inf\{s\geq N|B_s^c\}$, $\tau_i=\min(\tau,i)$, and $X_i:=L_n(\tau_i-1)$. Then
\begin{align*}
    &\e(X_{i+1}\mid \mathcal{G}_{i-1,n})\leq \left(1+\frac{1}{C_1i}\right)X_i+(1+\delta)mp,\\
    &X_{i+1}=X_i\text{ or }X_i+1\leq X_{i+1}\leq X_{i}+(2+\delta)m.
\end{align*}
Take $M=\frac{1}{2} \varepsilon mpn c^{\frac{1}{C_1}}$ in lemma \ref{me}, then for $np$ sufficiently large, 
\begin{align*}
    \p(X_n\geq D_2mpn-m(2+\delta),A)\leq 2\exp\left(-\frac{\left(\frac{1}{2}\varepsilon mpn c^{\frac{1}{C_2}}-O(cpn)\right)^2}{O(M)}\right).
\end{align*}
 Similarly, define $X_i'=L_n(\tau_i-1)+m(2+\delta)(i-\tau_i)$, then
\begin{align*}
    &\e(X_{i+1}'\mid \mathcal{F}_i)\geq \left(1+\frac{1}{C_2i}\right)X_i'+(1+\delta)mp,\\
    &X_{i+1}'=X_i' \text{ or } X_i'+1\leq X_{i+1}'\leq X_{i}+(2+\delta)m.
\end{align*}
Taking $M=\frac{1}{2}\varepsilon mpn c^{\frac{1}{C_2}}$ in lemma \ref{me2}, for $pn$ sufficiently large, we can get
\begin{align*}
    \p(X_n'\leq D_1mpn+m,A)\leq 2\exp\left(-\frac{\left(\frac{1}{2}\varepsilon mpn c^{\frac{1}{C_2}}-O(cpn)\right)^2}{O(M)}\right).
\end{align*}
    Notice that $X_n'\leq L(n)\leq X_n+m(2+\delta)$ on $B_{n-1}$, then
\begin{align*}
    &1-\p\left(\left((1+\delta)\frac{C_2}{C_2-1}-\varepsilon\right)mpn+m\leq L(n)\leq \left((1+\delta)\frac{C_1}{C_1-1}+\varepsilon\right)mpn\right)\\
    \leq &\p(A^c)+\p(B_{n-1}^c)+\p(X_n\geq D_2mpn-m(2+\delta),A)+\p(X_n'\leq D_1mpn+m,A).
\end{align*}
The right side can be dominated by $2\exp(-C_3'(pn)^{C_4})$.
\end{proof}

Now we are ready to prove Theorem \ref{thm2.1}. 
\begin{proof}[Proof of Theorem \ref{thm2.1}]
By Lemmas \ref{lem2.3} and \ref{lem2.8}, there exist constants $C_1,C_2,C_3,C_4$ such that $1<C_1< C_2$, for any $p,n>0$,
\begin{align*}
    \p(C_1mpn+m\leq L(n)\leq C_2mpn)\geq 1-2\exp(-C_3(pn)^{C_4}).
\end{align*}
Then by Lemma \ref{lem3.8}, for any small $\varepsilon>0$, there exists a constant $C_3'$ such that
\begin{align*}
        &\p\left(\left((1+\delta)\frac{C_2}{C_2-1}-\varepsilon\right)mpn+m\leq L(n)\leq \left((1+\delta)\frac{C_1}{C_1-1}+\varepsilon\right)mpn\right)\\
        \geq& 1-2\exp(-C_3'(pn)^{C_4}).
    \end{align*}
    Notice that for any $\varepsilon'>0$, we can take $\varepsilon$ sufficiently small such that 
    \begin{align*}
        \frac{(1+\delta)^2C_2-\varepsilon(1+\delta)(C_2-1)}{1+\delta C_2-\varepsilon(C_2-1)}+\varepsilon<\frac{(1+\delta)^2C_2}{1+\delta C_2}+\varepsilon',\\
        \frac{(1+\delta)^2C_1+\varepsilon(1+\delta)(C_2-1)}{1+\delta C_1+\varepsilon(C_2-1)}-\varepsilon>\frac{(1+\delta)^2C_1}{1+\delta C_1}-\varepsilon'.
    \end{align*}
    Replace $C_1$ with $(1+\delta)\frac{C_2}{C_2-1}-\varepsilon$ and $C_2$ with $(1+\delta)\frac{C_1}{C_1-1}+\varepsilon$, using Lemma \ref{lem3.8} again, we get
    \begin{align*}
        &\p\left(\left(\frac{(1+\delta)^2C_1}{1+\delta C_1}-\varepsilon'\right)mpn+m\leq L(n)\leq \left(\frac{(1+\delta)^2C_2}{1+\delta C_2}+\varepsilon'\right)mpn\right)\\
        \geq& 1-2\exp(-C_3''(pn)^{C_4}).
    \end{align*}
    for some constant $C_3''$.
    Notice that $\frac{(1+\delta)^2x}{1+\delta x}<x$ if and only if $x>2+\delta$. As a result, by this process, we can use Lemma \ref{lem3.8} to improve any existing bound. Using Lemma \ref{lem3.8} several times, we can complete the proof of Theorem \ref{thm2.1}.
\end{proof}
By Theorem \ref{thm2.1}, we can bound the probability of $L(n)$ deviating from a fixed proportion of its expectation. However, in Section \ref{maxdegsec} we need to estimate the probability when the deviation $\varepsilon$ depends on $n$ and converges to $0$ when $n\rightarrow \infty$. Therefore, we need a strengthening of Theorem \ref{thm2.1} as follows. 
\begin{theorem}\label{thm2.10}
    Let $0<C_3 \leq 1$ be the constant in Lemma \ref{lem2.8}, then there exist constants $C_1,C_2,\alpha>0$ such that for any $0<\varepsilon<1$, we have
    \begin{align}\label{eqthm2.10}
        \p((2 +\delta-\varepsilon)mpn+m \leq L(n) \leq (2 + \delta +\varepsilon)mpn)\geq 1- C_1\exp(-C_2\varepsilon^{\alpha}(pn)^{C_3}).
    \end{align}
    In particular, when the dimension $d$ is finite and fixed, we have $C_3=1$.
\end{theorem}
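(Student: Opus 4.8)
The plan is to bootstrap the fixed‑$\varepsilon$ concentration of Theorem~\ref{thm2.1} into a variable‑$\varepsilon$ statement by iterating Lemma~\ref{lem3.8} a number of times that grows like $\log(1/\varepsilon)$, tracking at every stage both the coefficients of the two‑sided bound on $L(n)$ and the accumulated failure probability.

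\emph{Base case and the contraction step.} First I would fix a small absolute constant $\varepsilon_0\in(0,\delta)$ and reuse the opening of the proof of Theorem~\ref{thm2.1} (Lemmas~\ref{lem2.3} and~\ref{lem2.8}) to obtain constants $1<a_0<2+\delta<b_0$ and $c_0>0$ with $\p(a_0 mpn+m\le L(n)\le b_0 mpn)\ge 1-2\exp(-c_0(pn)^{C_3})$ for all $p,n>0$, where $C_3$ is the constant of Lemma~\ref{lem2.8}. The engine is the observation, already implicit in the proof of Theorem~\ref{thm2.1}, that one application of Lemma~\ref{lem3.8} sends coefficients $(C_1,C_2)$ with $1<C_1<2+\delta<C_2$ to $(h(C_2)-\varepsilon,\,h(C_1)+\varepsilon)$ with $h(x)=(1+\delta)x/(x-1)$, so that two consecutive applications realise the map $g(x)=(1+\delta)^2x/(1+\delta x)$, which fixes $2+\delta$, has $g'(2+\delta)=(1+\delta)^{-2}<1$, and — once the coefficients have entered (after the first pair of applications) a fixed compact interval $[1+\eta_0,B_0]$ depending only on $m,\delta$ — is Lipschitz there with some constant $\rho<1$. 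Hence one ``stage'' (two applications of Lemma~\ref{lem3.8} with parameter $\varepsilon$) turns coefficients at distance $\Delta$ from $2+\delta$ into coefficients at distance at most $\rho\Delta+C\varepsilon$, while the failure probability increases by at most $C\exp(-c'\varepsilon^{\beta}(pn)^{C_3})$ for a universal exponent $\beta$. This $\beta$ is extracted from the proof of Lemma~\ref{lem3.8} by choosing its auxiliary constant $c$ (the one with $N=\lceil cn\rceil$) to be a fixed small power of $\varepsilon$ — small enough that the start‑up term $L_n(N-1)$ and the drift $O(cpn)$ are dominated by the target deviation $\tfrac12\varepsilon mpn$, but not so small that the resulting exponent, of order $\varepsilon\, c^{\,2/C_2-1/C_1}(pn)^{C_3}$, decays faster than a fixed power of $\varepsilon$; since the coefficients $C_1,C_2$ occurring here are all close to $2+\delta$, the relevant exponents are bounded and $\beta$ is a genuine constant depending only on $m,\delta$.

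\emph{Iteration and error collection.} I would then run $k=k(\varepsilon)=\Theta(\log(1/\varepsilon))$ stages, all with the same parameter $\varepsilon_j=c\varepsilon$ for a suitably small constant $c$. From $\Delta_{j+1}\le\rho\Delta_j+Cc\varepsilon$ one gets $\Delta_k\le\rho^k\Delta_0+\tfrac{Cc}{1-\rho}\varepsilon\le\varepsilon$ for $c$ small and $k$ of order $\log(\Delta_0/\varepsilon)$, so the final coefficients satisfy $|a_k-(2+\delta)|,|b_k-(2+\delta)|\le\varepsilon$, which is exactly the bound on $L(n)$ that is claimed. A union bound over the base case and the $k$ stages gives total failure probability $\le 2\exp(-c_0(pn)^{C_3})+Ck\exp(-c''\varepsilon^{\beta}(pn)^{C_3})=O(\log(1/\varepsilon))\exp(-c''\varepsilon^{\beta}(pn)^{C_3})$ (the prefactor $2$ in the hypothesis of Lemma~\ref{lem3.8} is inessential, since its proof is insensitive to the constant in the input bound). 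It remains to absorb the $\log(1/\varepsilon)$ prefactor, which I would do by enlarging the exponent, say $\alpha=2\beta$: when $\varepsilon^{\beta}(pn)^{C_3}\ge\log(1/\varepsilon)$ a direct comparison of exponents gives $O(\log(1/\varepsilon))\exp(-c''\varepsilon^{\beta}(pn)^{C_3})\le C_1\exp(-C_2\varepsilon^{\alpha}(pn)^{C_3})$, and when $\varepsilon^{\beta}(pn)^{C_3}<\log(1/\varepsilon)$ one has $\varepsilon^{\alpha}(pn)^{C_3}\le\varepsilon^{\alpha-\beta}\log(1/\varepsilon)=O(1)$ uniformly in $\varepsilon\in(0,1)$, so the asserted inequality holds trivially once $C_1$ is taken large. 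In finite fixed dimension $C_3=1$ throughout, exactly as in Lemma~\ref{lem2.8}.

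\emph{Main obstacle.} The delicate part is the single‑stage bookkeeping: one must tune the auxiliary constant $c$ inside Lemma~\ref{lem3.8} as a power of $\varepsilon$ so that the initialization and drift errors do not swamp the target deviation $\tfrac12\varepsilon mpn$, yet keep the surviving exponent $\varepsilon^{\beta}(pn)^{C_3}$ from decaying faster than the rate one is willing to pay, and one must confirm that the resulting $\beta$ is bounded above by an absolute constant (it is, because every coefficient entering the exponents converges to $2+\delta$ after the first stage). Everything else — the geometric contraction, counting $\Theta(\log(1/\varepsilon))$ stages, the union bound, and absorbing the logarithmic prefactor — is routine and parallels the end of the proof of Theorem~\ref{thm2.1}.
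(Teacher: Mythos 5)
Your overall strategy --- bootstrapping Theorem \ref{thm2.1} by iterating a self-improvement step $\Theta(\log(1/\varepsilon))$ times and then absorbing a sub-exponential prefactor --- is the right one, and your contraction analysis of $g(x)=(1+\delta)^2x/(1+\delta x)$ is correct. The gap is in the probability bookkeeping across stages. A single application of Lemma \ref{lem3.8} does not merely add a term $C\exp(-c'\varepsilon^{\beta}(pn)^{C_3})$ to the failure probability \emph{at time $n$}: its hypothesis must hold at all times in $[\lceil cn\rceil,n]$, so if $P_j(n)$ denotes the failure probability after stage $j$, the recursion has the form $P_{j+1}(n)\lesssim \sum_{i\geq cn}P_j(i)+D_{j+1}(n)$. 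Equivalently, the constant in the exponent degrades by a factor of at least $c^{C_4}$ per stage, because the stage-$j$ bound is invoked at times as small as $N=\lceil cn\rceil$. Since, as you yourself note, $c$ must be taken as a fixed power of $\varepsilon$ (so that the target deviation $M=\tfrac12\varepsilon mpn\,c^{1/C_1}$ dominates the $O(c\,pn)$ start-up and drift errors), the accumulated shrinkage over $k=\Theta(\log(1/\varepsilon))$ stages is $c^{\Theta(k)}=\varepsilon^{\Theta(\log(1/\varepsilon))}=\exp(-\Theta(\log^2(1/\varepsilon)))$, which is smaller than $\varepsilon^{\alpha}$ for every fixed $\alpha$ once $\varepsilon$ is small. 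Your flat union bound $Ck\exp(-c''\varepsilon^{\beta}(pn)^{C_3})$ silently assumes all stages share the same exponent constant, which they do not; the final bound your scheme actually yields is $\exp(-c\,\varepsilon^{\Theta(\log(1/\varepsilon))}(pn)^{C_3})$, not \eqref{eqthm2.10}.

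The repair is to let the per-stage target error scale with the current coefficient gap rather than freezing it at $c\varepsilon$: then the start-up deficit is proportional to the same gap, the auxiliary constant $c$ can be an \emph{absolute} constant, and the total degradation $c^{\Theta(k)}$ becomes a fixed power of $\varepsilon$, absorbable into $\alpha$. This is precisely how the paper proceeds: Lemma \ref{lem2.12} contracts the error itself by the fixed factor $(1+\delta/2)^{-1}$ per step (keeping the interval centered at $2+\delta$ throughout, rather than contracting coefficients through $g$), using the hypothesis at the \emph{larger} error $(1+\delta/2)\varepsilon$ on the time range $[\lceil cn\rceil,n]$ with $c=(\delta/(8+8\delta^2))^2$ absolute, and restricting that hypothesis to vertices within RGG-distance $2$ of $V_n$ via Lemma \ref{lem2.7} so that the union-bound prefactor is polynomial in $pn$ rather than in $n$. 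The induction in Lemma \ref{goodlninduction} then closes because $\alpha$ is chosen with $(1+\delta/2)^{\alpha}c^{C_3}\geq 1$, i.e.\ the growth of the error parameter as one steps back in time exactly compensates the shrinkage of the time scale inside the exponent --- the compensation your scheme is missing. Your final step, absorbing the logarithmic prefactor by splitting on whether $\varepsilon^{\beta}(pn)^{C_3}\geq\log(1/\varepsilon)$, is fine and mirrors the paper's reduction of Lemma \ref{goodlninduction} to Theorem \ref{thm2.10}.
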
 
    A direct deduction of this theorem that will be used frequently in this paper is the following proposition.
    \begin{proposition}\label{prop:2.11}
        There exists a constant $c>0$ such that
    \begin{align*}
        \p((2 +\delta)mpn-(pn)^{1-c}+m \leq L(n) \leq (2 + \delta)mpn+(pn)^{1-c})\geq 1- C_1\exp(-C_2(pn)^{c}).
    \end{align*}
    \end{proposition}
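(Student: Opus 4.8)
The plan is to read Proposition~\ref{prop:2.11} off Theorem~\ref{thm2.10} by specializing the error tolerance $\varepsilon$ to a negative power of $pn$ chosen so that the \emph{additive} deviation $\varepsilon m p n$ appearing in Theorem~\ref{thm2.10} coincides with the target deviation $(pn)^{1-c}$, while the exponential rate $\varepsilon^{\alpha}(pn)^{C_3}$ remains of the form $(pn)^{c}$. Let $C_1,C_2,\alpha,C_3$ be the constants from Theorem~\ref{thm2.10} (with $0<C_3\le 1$), and set $c:=C_3/(1+\alpha)$, which we may assume lies in $(0,1)$ after shrinking it if necessary.

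First I would dispose of the bounded regime: fix a constant $K_0>1$; if $pn\le K_0$, then after enlarging $C_1$ so that $C_1\ge \exp(C_2K_0^{c})$, the right-hand side $1-C_1\exp(-C_2(pn)^{c})$ is nonpositive and the claim is vacuous. So assume $pn>K_0$. Now apply Theorem~\ref{thm2.10} with
\[
\varepsilon=\varepsilon_n:=\frac{(pn)^{-c}}{m},
\]
which satisfies $0<\varepsilon_n<1$ because $pn>K_0>1$ and $m\ge 1$. With this choice one has $\varepsilon_n m p n=(pn)^{1-c}$, so the event $(2+\delta-\varepsilon_n)mpn+m\le L(n)\le (2+\delta+\varepsilon_n)mpn$ appearing in \eqref{eqthm2.10} is \emph{exactly} the event $(2+\delta)mpn-(pn)^{1-c}+m\le L(n)\le (2+\delta)mpn+(pn)^{1-c}$ in Proposition~\ref{prop:2.11}. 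For the error bound, $\varepsilon_n^{\alpha}(pn)^{C_3}=m^{-\alpha}(pn)^{\,C_3-c\alpha}$, and the choice $c=C_3/(1+\alpha)$ gives $C_3-c\alpha=c$; since $pn>1$ this yields $\varepsilon_n^{\alpha}(pn)^{C_3}\ge m^{-\alpha}(pn)^{c}$. Hence Theorem~\ref{thm2.10} produces a lower bound of the form $1-C_1\exp(-C_2 m^{-\alpha}(pn)^{c})$, and absorbing the factor $m^{-\alpha}$ (a constant depending only on $m$) into $C_2$ finishes the proof.

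Since the deduction is essentially a change of variable, there is no substantial obstacle; the only points that require attention are (i) ensuring $\varepsilon_n<1$, which forces us to treat the range of small $pn$ separately (where the statement is trivially true), and (ii) selecting the exponent $c$ so that the arithmetic $C_3-c\alpha\ge c$ holds, which is exactly what pins down $c\le C_3/(1+\alpha)$.
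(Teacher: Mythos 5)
Your proposal is correct and matches the paper's (implicit) argument: the paper presents Proposition~\ref{prop:2.11} as a direct specialization of Theorem~\ref{thm2.10}, and your choice $\varepsilon_n=(pn)^{-c}/m$ with $c=C_3/(1+\alpha)$ is exactly the change of variable that makes the additive deviation equal $(pn)^{1-c}$ while keeping the exponential rate of order $(pn)^{c}$. The handling of the small-$pn$ regime and the absorption of $m^{-\alpha}$ into $C_2$ are both fine.
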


The difference between Theorem \ref{thm2.1} and Theorem \ref{thm2.10} is that the parameters chosen do not depend on $\varepsilon$, and we give an estimate of the situation when $\varepsilon$ grows with $n$.

Note that by Theorem \ref{thm2.1}, when $\varepsilon_0$ is a constant, there exist constants $C_1,C_2,C_3>0$ such that 
    \begin{equation*}
        \p((2+\delta-\varepsilon_0)mpn \leq L(n) \leq (2+\delta +\varepsilon_0)mpn)\geq 1 - C_1\exp(-C_2(pn)^{C_3}).
    \end{equation*}
    And $C_3=1$ when the dimension $d$ is finite and fixed. Then we will use the following lemma to reduce $\varepsilon_0$ and complete the proof of Theorem \ref{thm2.10}.
    \begin{lemma}\label{lem2.12}
        Let $c=(\frac{\delta}{8+8\delta^2})^{2}$, $N=\lceil cn\rceil$. For $\varepsilon\leq \frac{\delta}{4}$, define the events 
        \begin{align*}
        A_{(s,n,\varepsilon)}&:=\{ L_s(n)\in [(2+\delta-\varepsilon)mpn+m,(2+\delta+\varepsilon)mpn-m(2+\delta)]\}, \\
        A_{(s,n,\varepsilon)}'&:=\{ L_s(n)\in [(2+\delta-\varepsilon)mpn-m(2+\delta),(2+\delta+\varepsilon)mpn+m(2+\delta)]\}, \\
        B_{(s,N,n,\varepsilon)}&:=\{\forall \ N \leq i \leq n,\ d_i(\V i,\V s)\leq2,  L(i)\in [(2+\delta-\varepsilon)mpi+m,(2+\delta+\varepsilon)mpi].
         \end{align*}
         Then there exists a constant $C=C(m,\delta)$ such that 
         \begin{equation*}
             \p\left(A_{(s,n,\frac{\varepsilon}{1+\delta/2})}^c\cap B_{(s,\lceil cn \rceil,n-1,\varepsilon)}\cap A_{(s,\lceil cn \rceil -1,\varepsilon)}'\right)\leq 4\exp(-C\varepsilon^2pn)
         \end{equation*}
         hold for any $s\geq n$.
    \end{lemma}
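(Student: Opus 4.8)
The plan is to follow the same martingale strategy as in Lemmas \ref{lem2.8} and \ref{lem3.8}, but now tracking how the relative error $\varepsilon$ shrinks by a constant factor $(1+\delta/2)^{-1}$ in one ``step'' of the recursion. Set $X_i := L_s(\tau_i - 1)$ where $\tau = \inf\{ i \geq N \mid B_{(s,N,i,\varepsilon)}^c \}$ is the first time some vertex near $\V s$ violates the good event, and $\tau_i = \min(\tau, i)$. As in the proof of Lemma \ref{lem3.8}, on the event $B_{(s,N,i,\varepsilon)}$ the one-step conditional expectation of $L_s(\cdot)$ satisfies a two-sided bound of the form $(1 + \frac{1}{(2+\delta+\varepsilon)i})X_i + (1+\delta)mp \leq \e(X_{i+1}\mid \mathcal{G}_{i-1,s}) \leq (1 + \frac{1}{(2+\delta-\varepsilon)i})X_i + (1+\delta)mp$, and the increments are bounded by $m(2+\delta)$, so Lemmas \ref{me} and \ref{me2} both apply. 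The point of starting at $N = \lceil cn\rceil$ with $c = (\delta/(8+8\delta^2))^2$ is that, on $A_{(s,N-1,\varepsilon)}'$, the value $X_N = L_s(N-1)$ is already within relative error $\varepsilon$ of $(2+\delta)mpN$, so the ``initial offset'' $M - X_N + \frac{C_2}{1-C_1}N$ appearing in Lemma \ref{me} is of order $\varepsilon mpn$ up to a term of order $\sqrt{c}\,\varepsilon mpn = (\delta/(8+8\delta^2))\varepsilon mpn$, which is a strict fraction of the main term.

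Concretely, I would take $M = \frac{1}{2}\varepsilon\,mpn\, c^{1/(2+\delta+\varepsilon)}$ in Lemma \ref{me} applied to $X_i$ with $C_1 = \frac{1}{2+\delta-\varepsilon}$ (note $C_1 < 1/2$, so the $C_1 < 1/2$ branch of Lemma \ref{me} is used and the denominator in the exponent is $O(N^{2C_1}n^{1-2C_1}) = O(cpn)$ up to constants depending on $m,\delta$). The fixed point $\frac{C_2}{1-C_1}$ of the deterministic recursion equals $(1+\delta)\frac{2+\delta-\varepsilon}{1+\delta-\varepsilon}$, and a direct computation shows this is $(2+\delta) + O(\varepsilon)$; choosing $M$ as above guarantees that on $B_{(s,N,n-1,\varepsilon)} \cap A_{(s,N-1,\varepsilon)}'$ the bound $X_n \leq (2+\delta + \frac{\varepsilon}{1+\delta/2})mpn - m(2+\delta)$ fails with probability at most $2\exp(-C\varepsilon^2 pn)$, since the numerator of the exponent is $(\tfrac12\varepsilon mpn\, c^{1/(2+\delta)} - O(\sqrt{c}\,\varepsilon mpn))^2$, which by the choice of $c$ is $\gtrsim (\varepsilon mpn)^2$, and the denominator is $O(\varepsilon mpn \cdot cpn) \cdot O(1) = O(\varepsilon (pn)^2)$; the ratio is $\gtrsim \varepsilon^2 pn$. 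The symmetric application of Lemma \ref{me2} to $X_i' := L_s(\tau_i - 1) + m(2+\delta)(i - \tau_i)$ with $C_1 = \frac{1}{2+\delta+\varepsilon}$ and the same $M$ handles the lower tail $X_n' \geq (2+\delta - \frac{\varepsilon}{1+\delta/2})mpn + m$ with the same probability bound. Combining the two via a union bound, together with the observation $X_n' \leq L_s(n) \leq X_n + m(2+\delta)$ on $B_{(s,N,n-1,\varepsilon)}$, yields $\p(A_{(s,n,\varepsilon/(1+\delta/2))}^c \cap B_{(s,N,n-1,\varepsilon)} \cap A_{(s,N-1,\varepsilon)}') \leq 4\exp(-C\varepsilon^2 pn)$.

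The main obstacle I anticipate is bookkeeping the $\varepsilon$-dependence cleanly: one must verify that after replacing $C_1, C_2$ by the $\varepsilon$-perturbed fixed points $\frac{2+\delta\pm\varepsilon}{1+\delta\pm\varepsilon}\cdot(1+\delta)$, the new relative error is at most $\frac{\varepsilon}{1+\delta/2}$ rather than merely $\varepsilon(1 - c')$ for some tiny $c'$, and that this holds uniformly for all $\varepsilon \leq \delta/4$ and not just asymptotically. This requires the elementary but careful estimate that $\bigl|(1+\delta)\frac{2+\delta-\varepsilon}{1+\delta-\varepsilon} - (2+\delta)\bigr| = \frac{\varepsilon(1+\delta)}{1+\delta-\varepsilon} \leq \frac{\varepsilon}{1+\delta/2} - (\text{slack absorbing the }M/(mpn)\text{ term})$ when $\varepsilon \leq \delta/4$, and similarly for the lower bound; once that inequality is pinned down with the explicit constant $c = (\delta/(8+8\delta^2))^2$ making the slack work out, the rest is a routine substitution into Lemmas \ref{me} and \ref{me2}. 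A secondary point to handle with care is that $\tau \leq n-1$ could itself be caused by the event whose probability we are bounding, but since $X_i$ (resp.\ $X_i'$) is frozen after $\tau$ and the bad event $B_{(s,N,n-1,\varepsilon)}$ is included in the intersection on the left-hand side, this is exactly the standard stopping-time decoupling already used in Lemma \ref{lem2.8}.
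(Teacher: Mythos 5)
Your overall strategy coincides with the paper's own proof: freeze $L_s(\cdot)$ at the stopping time $\tau$, apply Lemma \ref{me} to $X_i=L_s(\tau_i-1)$ for the upper tail and Lemma \ref{me2} to the compensated process $X_i'=L_s(\tau_i-1)+m(2+\delta)(i-\tau_i)$ for the lower tail, and combine via $X_n'\leq L_s(n)\leq X_n+m(2+\delta)$ on $B_{(s,\lceil cn\rceil,n-1,\varepsilon)}$; your remark about the stopping time and the inclusion of the bad event on the left-hand side is also exactly how the paper handles it. However, the quantitative bookkeeping you single out as the crux is stated incorrectly, and taken literally it would block the argument. The fixed-point deviation is
\begin{equation*}
(1+\delta)\frac{2+\delta-\varepsilon}{1+\delta-\varepsilon}-(2+\delta)=\frac{\varepsilon}{1+\delta-\varepsilon},
\end{equation*}
not $\frac{\varepsilon(1+\delta)}{1+\delta-\varepsilon}$; with your expression the inequality you propose to verify, $\frac{\varepsilon(1+\delta)}{1+\delta-\varepsilon}\leq\frac{\varepsilon}{1+\delta/2}-\text{slack}$, is false for every $\delta>0$ and small $\varepsilon$, since $\frac{1+\delta}{1+\delta-\varepsilon}>1>\frac{1}{1+\delta/2}$. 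The corrected identity does leave room, precisely because $\varepsilon\leq\delta/4<\delta/2$ gives $\frac{\varepsilon}{1+\delta-\varepsilon}\leq\frac{\varepsilon}{1+\delta/2}$ with slack of order $\varepsilon\delta$, and that slack is what must absorb the martingale fluctuation term.

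Relatedly, your choice $M=\frac12\varepsilon mpn\,c^{1/(2+\delta+\varepsilon)}$ is too large by a constant factor: at time $n$ the threshold in Lemma \ref{me} is roughly $(2+\delta)mpn+\frac{\varepsilon}{1+\delta-\varepsilon}mpn+Mc^{-1/(2+\delta-\varepsilon)}$, which with your $M$ exceeds $\bigl(2+\delta+\frac{\varepsilon}{1+\delta/2}\bigr)mpn$, so the target event $\{X_n\geq(2+\delta+\frac{\varepsilon}{1+\delta/2})mpn-2m(2+\delta)\}$ is not contained in the event Lemma \ref{me} bounds. One needs $M$ of order at most $\frac{\delta\varepsilon}{(1+\delta)^2}mpn\,c^{1/(2+\delta-\varepsilon)}$ (this is effectively the paper's choice), and it is exactly here that the specific constant $c=(\frac{\delta}{8+8\delta^2})^2$ is used, to guarantee that $M$ also dominates the initial offset $\varepsilon mpcn$ coming from $A_{(s,\lceil cn\rceil-1,\varepsilon)}'$. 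Finally, a smaller slip: the denominator in Lemma \ref{me} is a maximum, $32C_3^2\max\bigl(M,\frac{C_2}{(1-C_1)(1-2C_1)}N^{2C_1}n^{1-2C_1}\bigr)=O(pn)$, not a product of order $\varepsilon(pn)^2$; with your stated denominator the exponent would only be of order $\varepsilon$, whereas the correct $O(pn)$ denominator yields the claimed rate $\varepsilon^2pn$. With these constants repaired, your proof is the paper's proof.
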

    \begin{proof}
        Define $\tau=\inf\{t\geq \lceil cn \rceil\mid B_{(s,\lceil cn \rceil,t,\varepsilon)}^c\}$, $\tau_i=\min(\tau,i)$, and $X_i=L_s(\tau_i-1)$. Then $X_i\in \mathcal{G}_{i-1,s}$, and we have
        \begin{align*}
            &\e(X_{i+1}\mid \mathcal{G}_{i-1,s})\leq \left(1+\frac{1}{(2+\delta-\varepsilon)i}\right)X_i+mp(1+\delta),\\
            &X_{i+1}=X_i \text{ or } X_i+1\leq X_{i+1}\leq X_i+m(2+\delta).
        \end{align*}
        Then Lemma \ref{me} gives 
        \begin{align*}
            &\p\left(X_n\geq \left(2+\delta+\frac{\varepsilon}{1+\delta/2}\right)mpn-2m(2+\delta),\ A_{(s,\lceil cn \rceil -1,\varepsilon)}'\right)\\
            \leq& \min\left(1,\exp\left(-\frac{(\varepsilon\frac{\delta}{4(1+\delta)^{2}} mpn c^{\frac{1}{2+\delta-\varepsilon}}-\varepsilon mpcn-O(1))^2}{O(pn)}\right)\right)\leq 2\exp(-C\varepsilon^2 pn).
        \end{align*}
        Similarly, define $X_i'=L_n(\tau_i-1)+m(2+\delta)(i-\tau_i)$, and we have
        \begin{align*}
            &\e(X_{i+1}'\mid \mathcal{F}_{i})\geq \left(1+\frac{1}{(2+\delta+\varepsilon)i}\right)X_i'+mp(1+\delta),\\
            &X_{i+1}'=X_i' \text{ or } X_i'+1\leq X_{i+1}'\leq X_i'+m(2+\delta).
        \end{align*}
        Lemma \ref{me2} gives
        \begin{align*}
            &\p\left(X_n'\leq \left(2+\delta-\frac{\varepsilon}{1+\delta/2}\right)mpn+m,\ A_{(s,\lceil cn \rceil -1,\varepsilon)}'\right)\\
            \leq &\min\left(1,\exp\left(-\frac{(\varepsilon\frac{\delta}{4(1+\delta)^{2}} mpn c^{\frac{1}{2+\delta-\varepsilon}}-\varepsilon mpcn-O(1))^2}{O(pn)}\right)\right)\leq 2\exp(-C\varepsilon^2pn).
        \end{align*}
        Since $X_n'\leq L_s(n)\leq X_n+m(2+\delta)$ on $B_{(s,\lceil cn \rceil,n-1,\varepsilon)}$, therefore
        \begin{align*}
               &\p\left(A_{(s,n,\frac{\varepsilon}{1+\delta/2})}^c\cap B_{(s,\lceil cn \rceil,n-1,\varepsilon)}\cap A_{(s,\lceil cn \rceil -1,\varepsilon)}'\right)\\
               \leq& \p\left(X_n\geq \left(2+\delta+\frac{\varepsilon}{1+\delta/2}\right)mpn-2m(2+\delta),\ A_{(s,\lceil cn \rceil -1,\varepsilon)}'\right)\\
               +&\p\left(X_n'\leq \left(2+\delta-\frac{\varepsilon}{1+\delta/2}\right)mpn+m,\ A_{(s,\lceil cn \rceil -1,\varepsilon)}'\right)\leq 4\exp(-C\varepsilon^2pn).
        \end{align*}
    \end{proof}
    Lemma \ref{lem2.12} shows that if all vertices $\V i$ near a new vertex $\V n$ have a well-concentrated for $L(i)$ with error $\varepsilon$, then we can derive a tighter concentration for $L(n)$ with error $\frac{1}{1+\delta/2} \varepsilon$. This allows us to prove the following lemma by mathematical induction.
    \begin{lemma}\label{goodlninduction}
        Let $0<C_3\leq 1$ be the constant in Lemma \ref{lem2.8}, then there exist constants $C_1,C_2,\alpha>0$ such that for any $0<\varepsilon<1$, we have
    \begin{align*}
        &\p((2 +\delta-\varepsilon)mpn+m \leq L(n) \leq (2 + \delta +\varepsilon)mpn)\\
        \geq& 1- (C_1pn+C_1)^{C_1(1-\log(\varepsilon))}\exp(-C_2\varepsilon^{\alpha}(pn)^{C_3}).
    \end{align*}
    In particular, when the dimension $d$ is finite and fixed, we obtain $C_3 = 1$.
    \end{lemma}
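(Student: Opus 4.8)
The plan is to iterate Lemma~\ref{lem2.12}. Set $\varepsilon_0:=\delta/4$ and $\varepsilon_k:=\varepsilon_0(1+\delta/2)^{-k}$, and given a target $0<\varepsilon<1$ let $K:=\min\{k\ge 0:\varepsilon_k\le\varepsilon\}$, so that $\varepsilon/(1+\delta/2)\le\varepsilon_K\le\varepsilon$ and $K=\Theta(1-\log\varepsilon)$. It suffices to exhibit constants $C_1,C_2,\alpha>0$ (depending only on $m,\delta$, with $\alpha$ fixed below) such that for every $k\ge 0$, every $p,n>0$ and every $s\ge n$,
\begin{align*}
\p\big(L_s(n)\notin[(2+\delta-\varepsilon_k)mpn+m,\ (2+\delta+\varepsilon_k)mpn]\big)\ \le\ (C_1pn+C_1)^{C_1(1+k)}\exp\!\big(-C_2\varepsilon_k^{\alpha}(pn)^{C_3}\big).
\end{align*}
Evaluating this at $k=K$, using $\varepsilon_K\ge\varepsilon/(1+\delta/2)$ to replace $\varepsilon_K^\alpha$ by a constant multiple of $\varepsilon^\alpha$ and $1+K\lesssim 1-\log\varepsilon$ to rewrite the prefactor exponent, then yields the statement (the level-$K$ interval lies inside the target interval, so the level-$K$ failure probability dominates). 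I would run the induction for $L_s(n)$ with $s\ge n$ throughout — the case $s=n$ recovering $L(n)$ up to the harmless additive shift $m(2+\delta)$ — precisely so that Lemma~\ref{lem2.12}, which is phrased for such $s$, applies verbatim. The base case $k=0$ is Theorem~\ref{thm2.1}, whose argument adapts routinely to $L_s$; one only needs $C_1$ large enough that $C_1^{C_1}$ beats the constant there, and $C_2$ small enough that $C_2\varepsilon_0^\alpha$ is below its rate.

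\textbf{The inductive step.} Assume the bound at level $k$, fix $s\ge n$, and condition on the position of $V_s$. With $c=(\delta/(8+8\delta^2))^2$ and $N=\lceil cn\rceil$ as in Lemma~\ref{lem2.12}, the event that $L_s(n)$ lies outside the level-$(k+1)$ interval is contained in the union of three events: the ``Lemma~\ref{lem2.12} bad event'' $A_{(s,n,\varepsilon_k/(1+\delta/2))}^c\cap B_{(s,N,n-1,\varepsilon_k)}\cap A_{(s,N-1,\varepsilon_k)}'$, whose probability is at most $4\exp(-C\varepsilon_k^2pn)$ by that lemma; the complement of $B_{(s,N,n-1,\varepsilon_k)}$; and the complement of $A_{(s,N-1,\varepsilon_k)}'$. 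The third concerns $L_s(N-1)$ with $N-1\asymp cn$ and is bounded by the level-$k$ hypothesis at scale $\asymp cn$ (since $A'\supseteq A$, passing to its complement costs nothing). For the second, I would apply Lemma~\ref{lem2.7} with $k=2$ to the events $A_i:=\{L(i)\notin[(2+\delta-\varepsilon_k)mpi+m,(2+\delta+\varepsilon_k)mpi]\}$ for $N\le i\le n-1$: each $A_i$ lies in $\mathcal{F}_{n-1}$ (as $L(i)$ depends only on the first $i$ vertices and the edges before time $i$), and since $\{d_i(V_i,V_s)\le 2\}\subseteq\{d_n(V_i,V_s)\le 2\}$ the lemma (stated for $d_n$) applies and gives $\p(B_{(s,N,n-1,\varepsilon_k)}^c)\le(2pn+2)^2 c_{\text{bad}}(1-\log c_{\text{bad}})$, where $c_{\text{bad}}$ is the maximum over $N\le i\le n-1$ of the level-$k$ bound at $i$ — itself at most the level-$k$ prefactor evaluated at $n$ times the level-$k$ exponential rate evaluated at $cn$. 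Writing $q_k(n)$ for the level-$k$ failure probability supremized over $s\ge n$, the three contributions give
\begin{align*}
q_{k+1}(n)\ \le\ 4\exp(-C\varepsilon_k^2pn)\ +\ (2pn+2)^2 c_{\text{bad}}(1-\log c_{\text{bad}})\ +\ q_k(\lceil cn\rceil-1),
\end{align*}
and it remains to bound the right-hand side by $(C_1pn+C_1)^{C_1(2+k)}\exp(-C_2\varepsilon_{k+1}^\alpha(pn)^{C_3})$.

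\textbf{Closing the recursion; the main obstacle.} The decisive choice is $\alpha$: take $\alpha$ (depending only on $\delta$) with $\alpha\ge 2$ and $(1+\delta/2)^{\alpha}c^{C_3}\ge 1$. The first requirement disposes of $4\exp(-C\varepsilon_k^2pn)$, since $\varepsilon_k<1$, $C_3\le1$ and $\varepsilon_k>\varepsilon_{k+1}$ give $\varepsilon_k^2pn\ge\varepsilon_{k+1}^\alpha(pn)^{C_3}$ whenever $pn\ge 1$ (and for $pn<1$ the target bound already exceeds $1$, so nothing is to be proved). The second requirement is exactly what makes the scale shift $n\mapsto cn$ harmless: it turns the exponential rate $C_2\varepsilon_k^\alpha(pn)^{C_3}$ into $C_2\varepsilon_k^\alpha c^{C_3}(pn)^{C_3}\ge C_2\varepsilon_{k+1}^\alpha(pn)^{C_3}$, so the rate at level $k+1$ is preserved rather than degraded. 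The factor $(2pn+2)^2$ together with $1-\log c_{\text{bad}}\lesssim(pn)^{C_3}$ multiplies the prefactor by a fixed power of $pn$, which is absorbed into the passage $C_1(1+k)\mapsto C_1(1+k)+C_1=C_1\big(1+(k+1)\big)$ once $C_1$ is large; the $O(1)$ multiplicative constants accrued over the $K\asymp 1-\log\varepsilon$ steps compound to a factor $\varepsilon^{-O(1)}$, which $(C_1pn+C_1)^{C_1(1-\log\varepsilon)}$ already dominates for $C_1$ large. I expect this constant bookkeeping — committing to a single $\alpha$ and then to $C_1,C_2$ that simultaneously survive the base case, the per-step polynomial inflation, and the compounded constants — to be the main source of tedium; the genuine probabilistic content has already been extracted in Lemmas~\ref{lem2.12} and \ref{lem2.7}. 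The only other points that need a line of care are the measurability remark above and the $d_i$-versus-$d_n$ substitution that brings Lemma~\ref{lem2.7} to bear.
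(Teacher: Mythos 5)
Your proposal is correct and follows essentially the same route as the paper: a geometric induction on the error tolerance with ratio $(1+\delta/2)$, using Lemma \ref{lem2.12} for the inductive step, Lemma \ref{lem2.7} to control the union over vertices near $V_s$, Theorem \ref{thm2.1} as the base case, and the same two constraints on $\alpha$ (handling the $\varepsilon_k^2 pn$ term and the scale shift $n\mapsto cn$). Your explicit indexing by level $k$ and the uniform treatment of $L_s(n)$ over $s\ge n$ are only cosmetic refinements of the paper's bookkeeping.
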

   First, we explain that this lemma is equivalent to Theorem \ref{thm2.10}. Since we only need to consider the situation of $\varepsilon^{2\alpha}(pn)^{C_3}\geq M$ for $M$ is a large constant, or the right side of \eqref{eqthm2.10} is less than $0$ by taking $C_1,\alpha$ in Theorem \ref{thm2.10} sufficiently large. Then
    \begin{align*}
        &C_1\log(C_1pn+C_1)(1-\log(\varepsilon))-C_2\varepsilon^{\alpha}(pn)^{C_3}\leq C'(pn)^{0.1C_3}\varepsilon^{-0.1\alpha}-C_2\varepsilon^{\alpha}(pn)^{C_3}\\
        \leq& C'M^{-0.6}(pn)^{C_3}\varepsilon^{\alpha}-C_2\varepsilon^{\alpha}(pn)^{C_3} \leq -C_2'\varepsilon^{\alpha}(pn)^{C_3}
    \end{align*}
    for some constants $C',C_2'>0$ that depend only on $M,C_1,C_2,\alpha$ chosen in Lemma \ref{goodlninduction}. As a result, to prove Theorem \ref{thm2.10} we only need to prove Lemma \ref{goodlninduction}.
    \begin{proof}[Proof of Lemma \ref{goodlninduction}]
        We will use mathematical induction. Let $c=(\frac{\delta}{8+8\delta^2})^{2}$, $\varepsilon_0=\frac{1}{2}$. By Theorem \ref{thm2.1}, for any constant $\varepsilon_0>0$, there exist constants $C_1> 10+\frac{10}{\log(1+\delta/2)}$, $0<C_2<\min(C,1)$ where $C$ is the constant in Lemma \ref{lem2.12} and $\alpha>2-\frac{\log(c)}{\log(1+\delta/2)}$ such that Lemma \ref{goodlninduction} holds for $\varepsilon_0\leq \varepsilon \leq 1$. Suppose Lemma \ref{goodlninduction} holds for $\varepsilon\geq \varepsilon_{k}$ for some $\varepsilon_k>0$, then for $\frac{\varepsilon_k}{1+\delta/2}\leq \varepsilon \leq \varepsilon_k$, by Lemma \ref{lem2.12},
        \begin{align*}
            &1-\p((2 +\delta-\varepsilon)mpn+m \leq L(n) \leq (2 + \delta +\varepsilon)mpn)\\
            \leq &1-\p\left(B_{(n,\lceil cn\rceil, n-1, (1+\delta/2)\varepsilon)}\right)+1-\p\left(A_{(n,\lceil cn\rceil-1,(1+\delta /2)\varepsilon)}'\right)+4\exp(-C\varepsilon^2np).
        \end{align*}
        Where $B_{(n,\lceil cn\rceil, n-1, (1+\delta/2)\varepsilon)}$ and $A_{(n,\lceil cn\rceil-1,(1+\delta /2)\varepsilon)}$ have the same definition as in Lemma \ref{lem2.12}. Let $N=\lceil cn \rceil $, $c_{n,\varepsilon}=(C_1pn)^{C_1(1-\log(\varepsilon))}\exp(-C_2\varepsilon^{\alpha}(pn)^{C_3})$, by the induction hypothesis and Lemma \ref{lem2.7},
        \begin{align*}
            \p\left(A_{(n,\lceil cn\rceil-1,(1+\delta /2)\varepsilon)}'\right)&\geq 1-c_{N,(1+\delta /2)\varepsilon},\\
            \p\left(B_{(n,\lceil cn\rceil, n-1, (1+\delta/2)\varepsilon)}\right)&\geq 1-(2pn+2)^{2}c_{N,(1+\delta /2)\varepsilon}(1+\log(c_{N,(1+\delta /2)\varepsilon}))\\
            &\geq 1-(2pn+2)^{3}c_{N,(1+\delta /2)\varepsilon}.
        \end{align*}
        Since $c_{n,\varepsilon}\geq (C_1pn+C_1)^{C_1\log(1+\delta/2)}c_{N,(1+\delta /2)\varepsilon}$, we have
        \begin{align*}
            c_{n,\varepsilon}\geq 1-\p(B_{(n,\lceil cn\rceil, n-1, (1+\delta/2)\varepsilon)})+1-\p(A_{(n,\lceil cn\rceil-1,(1+\delta /2)\varepsilon)}')+4\exp(-C\varepsilon^2np).
        \end{align*}
        which complete the proof of the induction.
    \end{proof}
    \begin{remark}
        We don't think the condition that $d$ is finite and fixed for $C_3=1$ in Theorems \ref{thm2.1} and \ref{thm2.10} is necessary. Actually, to remove this condition, we only need to prove $C_3=1$ in Lemma \ref{lem3.8} for any dimension. However, we cannot find a way to prove it, and it is left as an open problem.
    \end{remark}

\section{Preliminary estimates on subgraph probability}\label{sectionedgep}
To estimate the number of triangles, we want to know the probability of each specific subgraph appearing in GPM. Theorem \ref{thm3.1} is an estimate of it.  
Take a constant $c$ sufficiently small, for any parameters $n_1\leq n_2$, define the event 
\begin{equation}
    G_{n_1,n_2}= \{L(i)\in [(2+\delta)mpi-(pi)^{1-c},(2+\delta)mpi+(pi)^{1-c}],\ \forall \ n_1\leq i\leq n_2\}.\label{eq:edgeprob-lnevent}
\end{equation}
Note that by Proposition \ref{prop:2.11}, $\p(G_{n_1,n_2})=1-o(\exp(-(pn_1)^{c})$.
\begin{theorem}\label{thm3.1}
    Suppose $p$ is a constant. For any constant $k$, let ($\V{a_i},\ 1\leq i\leq k$), ($\V{b_i},\ 1\leq i\leq k$) be vertices such that $a_i\leq b_i\leq n, \ \forall \ 1\leq i\leq k$, let $1\leq t_1,t_2\dots, t_k\leq m$ such that $(b_i, t_i)$ are all distinct. Define $A:=\{D(\V{a_i},\V{b_i})\leq r,\ \forall \ 1\leq i\leq k\}$ and $a':=\inf\limits_{1\leq i\leq k}a_i$, then
    \begin{align}
        &\p(v_{b_i,t_i}=\V {a_i}\ \forall \ 1\leq i\leq k) \notag \\
        = &(1+O((a')^{-c}))\p(A)\prod\limits_{i=1}^{k}\left((1+\delta)m+\#\{j\mid 1\leq j<i,a_j=a_i\}\right)((2+\delta)m)^{-1}b_i^{-\frac{1+\delta}{2+\delta}}a_i^{-\frac{1}{2+\delta}}.\label{eq:3.1}
    \end{align}
\end{theorem}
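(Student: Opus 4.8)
The plan is to work throughout on the event $G_{a',n}$ of \eqref{eq:edgeprob-lnevent}, on which $L(i)=(2+\delta)mpi\,(1+O((pi)^{-c}))$ for every $i\ge a'$; by Proposition~\ref{prop:2.11} its complement contributes an additive error negligible against the (polynomially small) main term once $a'$ is large, and for bounded $a'$ the $O((a')^{-c})$ factor renders the claim a statement about orders of magnitude only. Relabel the $k$ pairs so that the edge-events $(b_1,t_1),\dots,(b_k,t_k)$ occur in chronological (lexicographic) order, so that $\#\{j<i:a_j=a_i\}$ counts the earlier events that already targeted $V_{a_i}$. Peeling the edge-events off one at a time using the transition rule \eqref{edgeconnectprob} gives
\begin{align*}
\p\Big(\textstyle\bigcap_{i=1}^k\{v_{b_i,t_i}=V_{a_i}\}\Big)
=\e\Bigg[\prod_{i=1}^k\frac{\widehat W_{a_i}(b_i,t_i-1)}{L(b_i)-m+t_i-1}\,\mathbf 1_{\{D(V_{a_i},V_{b_i})\le r\}}\Bigg],
\end{align*}
where $\widehat W_{a_i}(b_i,t_i-1)=W_{a_i}(b_i-1)+\#\{j<i:b_j=b_i,\ a_j=a_i\}$ is the weight of $V_{a_i}$ just before the $i$-th event evaluated on the event that all earlier events hit their prescribed targets; on $G_{a',n}$ each denominator equals $(2+\delta)mpb_i\,(1+O((a')^{-c}))$.

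The next step separates geometry from weights. Integrating out the positions of the source vertices $V_{b_i}$ (processed from the latest, so that the integrated position is independent of the weights still in play), each edge-event contributes the reciprocal of $\e[L(b_i)]=(2+\delta)mpb_i$, while each \emph{distinct} source vertex $V_b$ contributes the area $A\big(\bigcap_{i:b_i=b}B(V_{a_i},r)\cap S\big)$ of an intersection of detection balls — $p$ for a single constraint, $pf(V_a,V_{a'})$ for two target vertices $V_a,V_{a'}$, and so on. Since the model is rotationally invariant the joint law of the target weights does not depend on the target vertices' positions, so the outer expectation over those positions turns the product of these intersection-areas into exactly $\p(A)$; moreover the weights of distinct targets decouple at leading order, as distinct vertices grow by attaching to distinct new vertices and their covariance is of strictly smaller order. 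It therefore remains to show, for a target $V_a$ with chronological hit-times $b_{j_1}\le\dots\le b_{j_\ell}$,
\begin{align*}
\e\Big[\prod_{r=1}^{\ell}\widehat W_{a}(b_{j_r},t_{j_r}-1)\Big]
=\Big(\prod_{r=0}^{\ell-1}\big((1+\delta)m+r\big)\Big)\prod_{r=1}^{\ell}\Big(\frac{b_{j_r}}{a}\Big)^{1/(2+\delta)}\big(1+O((a')^{-c})\big),
\end{align*}
after which collecting this with the normalizations $((2+\delta)mpb_i)^{-1}$ and with $\p(A)$, and using $b_i^{1/(2+\delta)-1}=b_i^{-(1+\delta)/(2+\delta)}$, produces \eqref{eq:3.1}.

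The engine for the weight identity is that the \emph{rising-factorial} moments evolve with no forcing term: writing $x^{(\ell)}=x(x+1)\cdots(x+\ell-1)$ and using that a step increases $W_a$ by a Bernoulli$(W_a(s)/((2+\delta)s))$ amount up to lower order, $\e[W_a(s+1)^{(\ell)}\mid\mathcal F_s]=W_a(s)^{(\ell)}\big(1+\tfrac{\ell}{(2+\delta)s}\big)(1+o(1))$, hence $\e[W_a(s)^{(\ell)}]=((1+\delta)m)^{(\ell)}(s/a)^{\ell/(2+\delta)}(1+O((a')^{-c}))$ once one knows $W_a(a)=m(1+\delta)$, which fails only with probability $O((pa)^{-1})$ (when a fresh vertex forms a self-loop). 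In the telescoping, conditioning on the $r$-th edge-event targeting $V_a$ adds exactly one unit to $W_a$ at time $b_{j_r}$; pushing the earlier boosts forward with the same recursion turns the displayed expectation into $\e[W_a(b_{j_1}-1)^{(\ell)}]\prod_{r\ge2}(b_{j_r}/b_{j_1})^{1/(2+\delta)}$, which is precisely the rising factorial $\prod_{r=0}^{\ell-1}((1+\delta)m+r)$ times $\prod_r(b_{j_r}/a)^{1/(2+\delta)}$. A soft martingale/$L^\ell$-convergence argument legitimizes replacing conditional moments by these limits and delivers the decoupling of distinct targets from second moments.

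The step I expect to be the main obstacle is controlling the full hierarchy of rising-factorial weight moments — together with their analogues after conditioning on the forced edge-events, and the cross-moments of distinct targets — all with the \emph{uniform polynomial} error rate $O((a')^{-c})$; this is exactly why the polynomial-rate concentration of Theorem~\ref{thm2.1}/Proposition~\ref{prop:2.11} is needed rather than a soft $1+o(1)$, and some care is required since each conditioning step perturbs the $L$-values and the increment laws slightly. A secondary technical point is making the geometry/weight separation rigorous — checking that conditioning on $A$ disturbs the weight moments only at order $O((a')^{-c})$ — which again rests on rotational invariance together with the concentration of all the local sums $L_x(i)$ holding simultaneously over $x\in S$, available for finite fixed $d$ by covering $S$ with polynomially many spherical caps and a union bound.
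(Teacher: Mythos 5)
Your route---peel off the edge events, reduce to rising-factorial moments of the target weights, and compute those by the recursion $\e[\wdb{a}{s+1}^{(\ell)}\mid\mathcal F_s]\approx(1+\tfrac{\ell}{(2+\delta)s})\wdb{a}{s}^{(\ell)}$---is a genuine reorganization of the paper's argument rather than a copy of it, but the two are close cousins. The paper builds a single process $X_t$ (see \eqref{edgevariable}) whose weight factors $\prod_{i:a_i\le t<b_i}(\wdb{a_i}{t}+\#\{j<i:a_j=a_i,b_j>t\})$ are exactly your rising factorials, multiplies in $\p(A\mid\mathcal F_t)$, shows $X_t$ is an approximate super/submartingale on the concentration event $G_{M,n}$, and reads off \eqref{eq:3.1} by optional stopping; your version makes the moment computation explicit and separates the geometric factor by hand. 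What the paper's packaging buys is precisely the step you flag as your ``secondary technical point'': since $\p(A\mid\mathcal F_t)$ is constant in $t$ except at the $O(k)$ times $t\in\{a_i,b_i\}$ where a relevant position is revealed, the geometry--weight correlation is confined to finitely many steps, each absorbed into a $1+O(t^{-c})$ factor, and no independence or decoupling claim is ever needed.

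Two concrete gaps remain in your write-up. First, your opening display is not an identity: sequential conditioning gives $\p(\cap_iE_i)=\e[\mathbf 1_{E_1\cap\dots\cap E_{k-1}}\,\p(E_k\mid\mathcal F_{b_k,t_k-1})]$ and must be iterated, so the ``weights evaluated on the event that all earlier events hit their targets'' are conditional objects living under a changed measure; making this precise is exactly what the paper's product process does, and it also forces you to track how each conditioning perturbs $L(\cdot)$ and the increment law of $\wdb{a}{\cdot}$ (an issue you name but do not resolve). Second, the separation into $\p(A)$ times a product of decoupled weight moments is asserted from rotational invariance plus a claim that covariances between distinct targets are of lower order. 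Rotational invariance gives you the marginal law of $\wdb{a}{\cdot}$, but under $A$ the targets are forced to be within distance $r$ of common vertices, their local environments overlap, and their weights are driven by the correlated quantities $L_{a_i}(\cdot)$; you would need a quantitative decorrelation estimate (uniformly over positions, with polynomial rate $O((a')^{-c})$) that is nowhere in the paper and does not follow from Proposition~\ref{prop:2.11} alone. Until that lemma is supplied, the proposal is an outline with the right main term but an unproved crux; the paper's construction should be viewed as the device that makes this decoupling unnecessary rather than as a proof of it.
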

\begin{proof}
    For any $a\leq b$, $1\leq s\leq m$, define the edge probability
    \begin{align*}
            \ell(a,b,s,t)=
            \begin{cases}
    \mathbf{1}_{\{v_{b,s}=\V a\}}& a,b\leq t \\
    \frac{\wdb{a}{t}}{(2+\delta)mpt^{\frac{1}{2+\delta}}b^{\frac{1+\delta}{2+\delta}}}& a\leq t<b \\
    \frac{1+\delta}{(2+\delta)p}b^{-\frac{1+\delta}{2+\delta}}(a-1)^{-\frac{1}{2+\delta}}&a,b>t
    \end{cases}
    \end{align*}
     First, we prove the upper bound. Define the random process 
    \begin{align}
        X_t=&\p(A\mid \mathcal{F}_t)\prod\limits_{i=1}^{k}\ell(a_i,b_i,t_i,t)(1+\frac{\mathbf{1}_{\{b_i>t\}}\#\{j\mid 1\leq j<i,a_j=a_i,b_j>t\}}{\wdb{a_i}{t}\mathbf{1}_{\{a_i\leq t\}}+m(1+\delta)\mathbf{1}_{\{a_i> t\}}}) \label{edgevariable}\\
        =&\p(A\mid \mathcal{F}_t)\prod\limits_{i:b_i\leq t}\mathbf{1}_{\{v_{b_i,t_i}=\V{a_i}\}}\cdot \prod\limits_{i:a_i\leq t<b_i}\frac{\wdb{a}{t}+\#\{j\mid 1\leq j<i,a_j=a_i,b_j>t\}}{(2+\delta)mt^{\frac{1}{2+\delta}}b_i^{\frac{1+\delta}{2+\delta}}}\notag \\
        &\cdot \prod\limits_{i:a_i> t}((1+\delta)m+\#\{j\mid 1\leq j<i,a_j=a_i\})b_i^{-\frac{1+\delta}{2+\delta}}(a_i-1)^{-\frac{1}{2+\delta}}.
    \end{align}
    The purpose of defining the random process $X_t$ is to create a process with properties similar to a martingale and finally becomes an indicator function of the event on the left side of \eqref{eq:3.1}. Then we can use the Optional Stopping Theorem to bound the probability of the event.
    
     When $t\notin \cup_{i=1}^{k}\{a_i,b_i\}$ and $X_{t-1}>0$, we have $\p(A\mid  \mathcal{F}_t)=\p(A\mid \mathcal{F}_{t-1})$, and for $M\leq t$,
    \begin{align*}
        &\e\left(\left. \mathbf{1}_{G_{M,t}}\frac{X_t}{X_{t-1}}\right| \mathcal{F}_{t-1}\right)\\
        \leq&\e\left(\mathbf{1}_{G_{M,t}}\prod\limits_{i:a_i\leq t<b_i}\frac{\wdb{a_i}{t}+\#\{j\mid 1\leq j<i,a_j=a_i,b_j>t\}}{\wdb{a_i}{t-1}+\#\{j\mid 1\leq j<i,a_j=a_i,b_j>t\}}\left(1-\frac{1}{(2+\delta)t}+O(t^{-2})\right)\right)
    \end{align*}
    Notice that
    \begin{align}
        &\e\left(\left.\mathbf{1}_{G_{M,t}}\prod\limits_{i:a_i\leq t<b_i}\frac{\wda{a_i}{t}{l+1}+\#\{j\mid 1\leq j<i,a_j=a_i,b_j>t\}}{\wda{a_i}{t}{l}+\#\{j\mid 1\leq j<i,a_j=a_i,b_j>t\}}\right| \mathcal{F}_{t,l}\right)\notag\\
        \leq&\mathbf{1}_{G_{M,t}}\left(1+\sum\limits_{i=1}^{t-1}\frac{\#\{j\mid 1\leq j\leq k,a_j=i,b_j>t\}}{\wda{i}{t}{l}}\frac{\mathbf{1}_{\{D(\V t,\V i)\leq r\}}\wda{i}{t}{l}}{L(t)-m+l}\right)\notag\\
        =&1+\frac{\#\{j\mid 1\leq j\leq k,a_j\leq t,b_j>t,D(\V t,\V{a_j})\leq r\}}{(2+\delta)pmt(1+O(t^{-c}))}.\label{oneedge1}
    \end{align}
    Therefore,
    \begin{align}
        \e\left(\left.\mathbf{1}_{G_{M,t}}\frac{X_t}{X_{t-1}}\right| \mathcal{F}_{t-1}\right)\leq& \e\left(\mathbf{1}_{G_{M,t}}\prod\limits_{a_i\leq t<b_i}\left(1+\frac{1}{(2+\delta)t+O(t^{1-c})}\right)\left(1-\frac{1}{(2+\delta)t}+O(t^{-2})\right)\right)\notag \\
        =&1+O(t^{-1-c}).\label{edgeincrease1}
    \end{align}
        When $a_i=t$, $t\notin \cup_{i=1}^{k}\{b_i\}$ and $X_{t-1}>0$, then $\e(\p(A\mid \mathcal{F}_{t})\mid \mathcal{F}_{t-1})=\p(A\mid \mathcal{F}_{t-1})$ and similarly \eqref{oneedge1} is true. As a result, 
        \begin{align*}
            X_t\leq \frac{\p(A\mid \mathcal{F}_t)}{\p(A\mid \mathcal{F}_{t-1})}(1+O(t^{-1}))X_{t-1}.
        \end{align*}
        By taking expectations on both sides, we have
         \begin{align}
            \e(X_t\mid \mathcal{F}_{t-1})\leq (1+O(t^{-1}))X_{t-1}. \label{edgeincrease3}
        \end{align}
    When $b_i=t$, $X_{t-1}>0$, similarly \eqref{oneedge1} is true for $l\notin \{t_i\mid b_i=t\}$. For $l=t_i$,
    \begin{align*}
        \p(v_{b_i,l}=\V{a_i}\mid \mathcal{F}_{t,l-1})=\frac{\mathbf{1}_{\{D(\V{b_i},{\V{a_i}})\leq r\}}\wda{a_i}{t}{l-1}}{L(t)-m+l-1}\leq \frac{\mathbf{1}_{\{D(\V{b_i},\V{a_i})\leq r\}}\wda{a_i}{t}{l-1}}{(2+\delta)pmt(1+O(t^{-c}))}
    \end{align*}
    on $G_{M,t}$. Notice that $\p(A\mid \mathcal{F}_t)>0$ implies $D(\V{a_i},\V{b_i})\leq r$. Therefore,
    \begin{align}
        \e\left(\left. \frac{X_t}{X_{t-1}}\right| \mathcal{F}_{t-1}\right)\leq 1+O(t^{-c}).\label{edgeincrease2}
    \end{align}
    Combining \eqref{edgeincrease1}, \eqref{edgeincrease3} and \eqref{edgeincrease2} gives
    \begin{align*}
        \e(\mathbf{1}_{G_{M,n}}X_n\mid \mathcal{F}_M)\leq X_M(1+O(M^{-c})).
    \end{align*}
    Let $M=\inf\{t\mid G_{t,n}\text{~happen}\}$. Classifying the probability in \eqref{eq:3.1} by the value of $M$, a union bound gives
    \begin{align}
        &\p(v_{b_i,t_i}=\V{a_i},\ \forall \ 1\leq i \leq k)\notag \\
        \leq &\p(v_{b_i,t_i}=\V{a_i},\ \forall\ 1\leq i \leq k, \ M\leq a')+\sum\limits_{j=a'+1}^{n}\p(v_{b_i, t_i}=\V{a_i},\ \forall\ 1 \leq i \leq k,\ M=j)\notag \\
        \leq &(1+O((a')^{-c}))X_{a'}+\sum\limits_{j=a'+1}^{n}(1+O((j)^{-c}))\e(\mathbf{1}_{\{M=j\}}X_j)\notag\\
        \leq &(1+O((a')^{-c}))X_{a'}+\sum\limits_{j=a'+1}^{n}o(\exp(-(pj)^{c})((2+\delta)mj)^{k}f(A)^{-1})X_a'  \label{eq:3.8} \\
        =&(1+O((a')^{-c}))X_{a'}=\text{RHS~of~\eqref{eq:3.1}},\notag
    \end{align}
     where \eqref{eq:3.8} holds because $\p(M=j)=o(\exp(-(pj)^{c}))$ and $\frac{\ell(a,b,t,j)}{\ell(a,b,t,a')}\leq (2+\delta)mj$ always hold. This completes the proof of the upper bound.
     
    The proof of the lower bound is similar. Define $\tau=\inf\{t\mid G_{a',t} \text{ does not happen}\}$,
    \begin{align*}
        Y_t=\mathbf{1}_{G_{a',t}}X_t+\mathbf{1}_{G_{a',t}^c}Z_{\tau}
    \end{align*}
    and 
    \begin{align*}
        Z_t=\prod\limits_{i:a_i\leq t\leq b_i}\frac{t+k}{pt^{\frac{1}{2+\delta}}b_i^{\frac{1+\delta}{2+\delta}}}\prod\limits_{i:t< a_i}\frac{m(1+\delta)+k}{(2+\delta)mp(a_i-1)^{\frac{1}{2+\delta}}b_i^{\frac{1+\delta}{2+\delta}}},
    \end{align*}
    which is greater than the maximum possible value of $X_t$. The random process $Z_t$ acts like a ``compensation" process to make $Y_n$ always be an approximately sub-martingale. The same method will be used again in Section \ref{maxdegsec}.
    Notice that $\frac{\ell(a,b,s,t)}{\ell(a,b,s,t-1)}\leq (2+\delta)mt$. Similarly to the proof in the upper bound, we have 
    \begin{align*}
        &\e\left( \left.\frac{Y_{t+1}}{Y_t} \right| \mathcal{F}_t\right)\geq 1-O(t^{-1-c}),\qquad  t\notin \cup_{i=1}^{k}\{a_i,b_i\},\\
        &\e\left(\left.\frac{Y_{t+1}}{Y_t}\right| \mathcal{F}_t\right)\geq 1-O(t^{-c}),\qquad    t\in \cup_{i=1}^{k}\{a_i,b_i\}.\\
    \end{align*}
    Notice that $Z_t\leq (t+k)^k\p(A)^{-1}Y_a'$
    \begin{align*}
        &\p(v_{b_i,t_i}=\V{a_i},\ \forall\ 1\leq i\leq k)\geq \e(X_n)=\e(Y_n)-\e(Z_{\tau}\mathbf{1}_{G^c_{a',n}})\\
        =&(1-O((a')^{-c}))Y_{a'}-\sum_{j=a'}^{n}o(\exp(-pj)^c)(j+k)^{k}\p(A)^{-1}Y_{a'}=\text{RHS of \eqref{eq:3.1}},
    \end{align*}
    which completes the proof.
\end{proof}
  \section{Number of triangles}\label{sectiontc}
  In this section, we will prove Theorem \ref{main-triangle-count}. The main idea of the proof is the second moment method. We use Theorem \ref{thm3.1} to bound the first and second moments of the number of triangles. 
  
  Let $T_n$ be the number of triangles in $GPM_n$, recall that 
  \begin{equation*}
      F_p=\e(f(\V i,\V j)\mid \V j\in B(\V i,r))=\p(D(\V i,\V j)\leq r\mid \V i,\V j\sim Unif(B(\V i,r))).
  \end{equation*}
\begin{proposition}
    \begin{align*}
    \e(T_n)= \frac{m(m-1)(1+\delta)^2(m(1+\delta)+1)F_p \log(n)}{(2+\delta)\delta^2p}+O(1).
\end{align*}
\end{proposition}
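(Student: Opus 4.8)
The plan is to carry out a first-moment computation, feeding each term into the subgraph estimate of Theorem~\ref{thm3.1}. Expanding the definition of $T_n$ and using linearity of expectation,
\begin{align*}
    \e(T_n)=\sum_{1\le a<b<c\le n}\ \sum_{1\le t_1,t_2,t_3\le m}\p\bigl(v_{b,t_1}=\V a,\ v_{c,t_2}=\V a,\ v_{c,t_3}=\V b\bigr).
\end{align*}
For a fixed ordered triple $(a,b,c)$, every summand with $t_2=t_3$ vanishes (the $t_2$-th out-edge of $\V c$ cannot point to both $\V a$ and $\V b$), so only the $m^2(m-1)$ label triples with $t_2\neq t_3$ contribute, and to each I apply Theorem~\ref{thm3.1} with $k=3$ and the three required edges $(a_1,b_1)=(a,b)$, $(a_2,b_2)=(a,c)$, $(a_3,b_3)=(b,c)$ (all of which use the ``$1\le k\le n-1$'' branch of \eqref{edgeconnectprob} because $a<b<c$). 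Then $a'=\min\{a,a,b\}=a$, and the event $A$ in Theorem~\ref{thm3.1} is exactly $\{D(\V a,\V b)\le r,\ D(\V a,\V c)\le r,\ D(\V b,\V c)\le r\}$, i.e.\ that $\{\V a,\V b,\V c\}$ forms a triangle in $RGG_n$.

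Two explicit ingredients are then needed. First, $\p(A)=p^2F_p$: conditioning on $\V a$, the event $\{\V b,\V c\in B(\V a,r)\}$ has probability $p^2$, and given it $\V b,\V c$ are independent uniform points of $B(\V a,r)$, so the remaining constraint $D(\V b,\V c)\le r$ has conditional probability $F_p$ by definition (equivalently, integrate over the position of $\V a$ and recognise the function $f$). Second, the product in \eqref{eq:3.1}: the multiplicities $\#\{j<i:a_j=a_i\}$ equal $0,1,0$ for $i=1,2,3$ (only $a_1=a_2$), and using $b^{-\frac{1+\delta}{2+\delta}}b^{-\frac1{2+\delta}}=b^{-1}$ the product collapses to an explicit constant in $m,\delta$ times $a^{-\frac{2}{2+\delta}}b^{-1}c^{-\frac{2(1+\delta)}{2+\delta}}$. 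After collecting the powers of $p$ (each of the three edge-probabilities carries a $p^{-1}$ from its denominator $L(t)\sim(2+\delta)mpt$, cf.\ Theorem~\ref{thm2.1}, while $\p(A)$ carries $p^2$, for a net $p^{-1}$) and multiplying by the $m^2(m-1)$ surviving label triples, one obtains that each ordered triple $(a,b,c)$ contributes
\begin{align*}
    \bigl(1+O(a^{-\gamma})\bigr)\,K\,a^{-\frac{2}{2+\delta}}\,b^{-1}\,c^{-\frac{2(1+\delta)}{2+\delta}},\qquad K:=\frac{m(m-1)(1+\delta)^2\bigl(m(1+\delta)+1\bigr)F_p}{(2+\delta)^3\,p},
\end{align*}
where $\gamma>0$ is the small constant of Theorem~\ref{thm3.1}.

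It then remains to evaluate $\displaystyle\sum_{1\le a<b<c\le n}a^{-\frac{2}{2+\delta}}b^{-1}c^{-\frac{2(1+\delta)}{2+\delta}}$, which I would do by nested summation with integral comparison. Since $\tfrac{2}{2+\delta}<1$, the innermost sum over $a$ equals $\tfrac{2+\delta}{\delta}\,b^{\frac{\delta}{2+\delta}}+O(1)$; substituting, the sum over $b$ of $b^{-1}$ times this equals $\tfrac{(2+\delta)^2}{\delta^2}\,c^{\frac{\delta}{2+\delta}}+O(\log c)$; and since $-\tfrac{2(1+\delta)}{2+\delta}+\tfrac{\delta}{2+\delta}=-1$ exactly, the outermost sum over $c$ equals $\tfrac{(2+\delta)^2}{\delta^2}\log n+O(1)$ (the $O(\log c)$ remainder is carried against $c^{-\frac{2(1+\delta)}{2+\delta}}$, whose exponent is strictly below $-1$, hence summable). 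Plugging this in yields $\e(T_n)=K\cdot\tfrac{(2+\delta)^2}{\delta^2}\log n+O(1)$, which is precisely $\dfrac{m(m-1)(1+\delta)^2(m(1+\delta)+1)F_p\log n}{(2+\delta)\delta^2\,p}+O(1)$.

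The step I expect to require the most care is confirming that the total error is $O(1)$ and not merely $o(\log n)$. The subtle point is that bounding the relative error crudely by $O(1)$ would only give an error estimate $\sum_{a<b<c}O(1)\cdot a^{-\frac{2}{2+\delta}}b^{-1}c^{-\frac{2(1+\delta)}{2+\delta}}=\Theta(\log n)$; one genuinely has to use that Theorem~\ref{thm3.1} yields a relative error decaying like the positive power $a^{-\gamma}$, whereupon running the same three nested sums on $\sum_{a<b<c}O(a^{-\gamma})\,a^{-\frac{2}{2+\delta}}b^{-1}c^{-\frac{2(1+\delta)}{2+\delta}}$ turns the outermost exponent into $-1-\gamma<-1$, so this triple sum converges and the error contribution is $O(1)$. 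The remainders coming from truncating the three nested sums of the main term to their leading order are likewise $O(1)$ by the same exponent bookkeeping.
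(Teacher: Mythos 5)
Your proposal is correct and follows essentially the same route as the paper: expand $T_n$ over ordered triples and edge labels, apply Theorem~\ref{thm3.1} with $k=3$ and $a_1=a_2=a$, $a_3=b_1=b$, $b_2=b_3=c$, identify $\p(A)=p^2F_p$, and evaluate the resulting triple sum $\sum_{a<b<c}a^{-\frac{2}{2+\delta}}b^{-1}c^{-\frac{2+2\delta}{2+\delta}}=\bigl(\tfrac{2+\delta}{\delta}\bigr)^2\log n+O(1)$ by nested summation, using the $O(a^{-c})$ relative error to keep the error contribution at $O(1)$. Your explicit bookkeeping of the powers of $p$ (net $p^{-1}$) and of the vanishing $t_2=t_3$ terms matches the constants the paper obtains.
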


\begin{proof}
Notice that
    \begin{align*}
        T_n=\sum\limits_{\substack{a<b<c\\1\leq t_1,t_2,t_3\leq m}}\mathbf{1}_{\{v_{b,t_1}=\V a\}}\mathbf{1}_{\{v_{c,t_2}=\V a\}}\mathbf{1}_{\{v_{c,t_3}=\V b\}}.
    \end{align*}
For each term in the summation, take $k=3$, $a_1=a_2=a$, $a_3=b_1=b$, $b_2=b_3=c$ in Theorem \ref{thm3.1}. Then we have
\begin{align*}
    \e(T_n)=m^2(m-1)\frac{F_p}{p}\sum\limits_{1\leq r<s<t\leq n}(1+O(r^{-c}))\frac{(1+\delta)^2m^2((1+\delta)m+1)}{(2+\delta)^3m^3}r^{-\frac{2}{2+\delta}}s^{-1}t^{-\frac{2+2\delta}{2+\delta}}.
\end{align*}
By some straightforward calculation,
\begin{align}
    \sum\limits_{1\leq r<s<t\leq n}r^{-\frac{2}{2+\delta}}s^{-1}t^{-\frac{2+2\delta}{2+\delta}}=&\sum\limits_{1\leq s<t\leq n}\frac{2+\delta}{\delta}(s^{\frac{\delta}{2+\delta}}-O(1))s^{-1}t^{-\frac{2+2\delta}{2+\delta}}\notag \\
    =&\sum\limits_{1\leq t\leq n}\frac{2+\delta}{\delta}(\frac{2+\delta}{\delta}t^{\frac{\delta}{2+\delta}}-O(\log(t)))t^{-\frac{2+2\delta}{2+\delta}}\notag \\
    =&(\frac{2+\delta}{\delta})^2\log(n)+O(1)\label{counttriangle}.
\end{align}
And
\begin{align*}
    &\sum\limits_{1\leq r<s<t\leq n}r^{-\frac{2}{2+\delta}-c}s^{-1}t^{-\frac{2+2\delta}{2+\delta}}=\sum\limits_{1\leq s<t\leq n}O(s^{-\frac{2}{2+\delta}-c}t^{-\frac{2+2\delta}{2+\delta}})\\
    =&\sum\limits_{1\leq t\leq n}O(t^{-1-c}) =O(1).
\end{align*}
Then we have
\begin{align*}
    \e(T_n)= \frac{m(m-1)(1+\delta)^2(m(1+\delta)+1)F_p \log(n)}{(2+\delta)\delta^2p}+O(1).
\end{align*}
\end{proof}
\begin{proposition}
    \begin{align*}
        \e(T_n^2)-\e(T_n)^2=O(\log(n)).
    \end{align*}
\end{proposition}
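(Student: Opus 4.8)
The plan is the second moment method, using Theorem~\ref{thm3.1} to estimate the probability that two potential triangles occur simultaneously. Call a tuple $T=(a,b,c,t_1,t_2,t_3)$ with $a<b<c\le n$ and $1\le t_1,t_2,t_3\le m$ a \emph{potential triangle}, let $\p(T):=\p(v_{b,t_1}=V_a,\ v_{c,t_2}=V_a,\ v_{c,t_3}=V_b)$, and for two such tuples let $\p(T\wedge T')$ denote the probability that all six prescribed edges are present. Then $\e(T_n^2)-\e(T_n)^2=\sum_{T,T'}\big(\p(T\wedge T')-\p(T)\p(T')\big)$, summed over ordered pairs. I would split this sum by the overlap type of $T$ and $T'$: \textbf{(i)} the vertex sets $\{a,b,c\}$ and $\{a',b',c'\}$ are disjoint; \textbf{(ii)} the vertex sets meet, but $T$ and $T'$ prescribe no common edge event $\{v_{\beta,\tau}=V_\alpha\}$; \textbf{(iii)} $T$ and $T'$ prescribe a common edge event; together with a residual class of \emph{inconsistent} pairs, for which some slot $(\beta,\tau)$ is prescribed two different targets, so $\p(T\wedge T')=0$ (such pairs necessarily share a vertex). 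Throughout I use $\sum_T\p(T)=\e(T_n)=O(\log n)$ together with the two index-power bounds $\sum_T\p(T)\,a(T)^{-c}=O(1)$ and $\sum_T\p(T)\,a(T)^{-1}=O(1)$, where $a(T)$ is the smallest vertex index of $T$; the first is exactly the $r^{-c}$-weighted sum shown to be $O(1)$ in the computation of $\e(T_n)$ above, and the second is the analogous iterated sum with an extra factor $r^{-1}$, which converges for the same reason.

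\emph{Class (i).} The six prescribed edges use distinct slots on disjoint vertex sets, so Theorem~\ref{thm3.1} applies with $k=6$; since the vertex positions are i.i.d.\ and the combinatorial factors $(1+\delta)m+\#\{j<i:a_j=a_i\}$ do not couple $T$ to $T'$, both the geometric factor and the product in \eqref{eq:3.1} factor across the two triangles, so that $\p(T\wedge T')=(1+O(a_*^{-c}))\,\p(T)\p(T')$, where $a_*$ is the least of the six vertex indices. Using $a_*^{-c}\le a(T)^{-c}+a(T')^{-c}$, the contribution of class (i) is at most $2\big(\sum_T\p(T)a(T)^{-c}\big)\big(\sum_{T'}\p(T')\big)=O(1)\cdot O(\log n)=O(\log n)$.

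\emph{Classes (ii), (iii) and inconsistent pairs.} Here I bound the covariance crudely by $\p(T\wedge T')+\p(T)\p(T')$. By Theorem~\ref{thm3.1}, applied with $k\in\{6,5,4\}$ according to whether $T$ and $T'$ share zero, one or two edge events, $\p(T\wedge T')$ is at most a constant $C=C(m,\delta)$ (absorbing the geometric factor, which is $\le1$, and the combinatorial corrections, which are bounded) times the product of the factors $b_i^{-(1+\delta)/(2+\delta)}a_i^{-1/(2+\delta)}$ over the \emph{distinct} edge slots of the combined configuration. If no edge event is shared, this upper bound equals $C\,g(T)g(T')$ with $g(S):=r^{-2/(2+\delta)}s^{-1}t^{-(2+2\delta)/(2+\delta)}$ for a potential triangle $S$ on vertices $r<s<t$, and $g(S)\asymp\p(S)$; hence the whole of class (ii), the inconsistent pairs, and all the $\p(T)\p(T')$ terms are dominated by $\sum_{T,T'\text{ share a vertex}}\p(T)\p(T')$. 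For fixed $T$, writing the inner sum over those $T'$ containing a given vertex $V_v$ of $T$ and using $\sum_{T'\ni V_v}\p(T')=O(v^{-1})$ (proved by summing $g(T')$ over the three possible roles of $v$ in $T'$), this is at most $\sum_T\p(T)\sum_{v\in\{a,b,c\}}O(v^{-1})=\sum_T\p(T)\,O(a(T)^{-1})=O(1)$. If $T$ and $T'$ share exactly one edge event, the combined configuration has four distinct vertices in a natural linear order $a<b<c<c'$ and five distinct edge slots; enumerating the finitely many overlap patterns and reading the exponents off \eqref{eq:3.1} — for instance the product of factors is $a^{-2/(2+\delta)}b^{-(3+\delta)/(2+\delta)}c^{-(3+2\delta)/(2+\delta)}c'^{-(2+2\delta)/(2+\delta)}$ in one such pattern — one checks in each case that every iterated sum over $\{1,\dots,n\}$ converges, so this part contributes $O(1)$. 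The same holds when $T$ and $T'$ share exactly two edge events (then they lie on the same three vertices, $k=4$, and the iterated index-sum is again $O(1)$). Finally, the diagonal $T=T'$ gives $\sum_T(\p(T)-\p(T)^2)\le\e(T_n)=O(\log n)$.

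Adding the four contributions yields $\e(T_n^2)-\e(T_n)^2=O(\log n)$. The step I expect to be the main obstacle is the case analysis in class (iii): one must list the ways two potential triangles can share one or two edge slots — tracking which of the ordered vertices $a<b<c$ plays which part — determine the induced order of the (at most four) vertices involved, extract the precise tuple of exponents from \eqref{eq:3.1}, and confirm that no pattern produces a resonant, logarithmically divergent index-sum. Everything else — the power-sum bounds inherited from the first-moment estimate, the factorizations used in class (i), and the bound $\sum_{T'\ni V_v}\p(T')=O(v^{-1})$ — is routine.
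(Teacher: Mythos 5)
Your proposal is correct and follows essentially the same route as the paper: decompose $\e(T_n^2)$ according to how the two potential triangles overlap, apply Theorem~\ref{thm3.1} to each combined configuration (with the geometric factor and index products factorizing in the disjoint case), and check that every iterated index-sum for an overlapping configuration converges, leaving only the diagonal-type terms to contribute $O(\log n)$. The paper's own proof merely asserts the four bounds $\e(\sum_{A_i}\mathbf{1}_\Delta)$ classified by the size of the vertex overlap, so your write-up (including the $\sum_{T'\ni V_v}\p(T')=O(v^{-1})$ bound and the explicit exponent checks) supplies strictly more detail; the only slip is that two triangles sharing exactly one edge event may sit on three rather than four vertices, a case your convergence check still covers.
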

\begin{proof}
    Similarly to the proof in the first moment,
    \begin{align*}
        T_n^2=\sum\limits_{\substack{a<b<c\\d<e<f\\1\leq t_i\leq m}}\mathbf{1}_{\{v_{b,t_1}=\V a\}}\mathbf{1}_{\{v_{c,t_2}=\V a\}}\mathbf{1}_{\{v_{c,t_3}=\V b\}}\mathbf{1}_{\{v_{e,t_4}=\V d\}}\mathbf{1}_{\{v_{f,t_5}=\V d\}}\mathbf{1}_{\{v_{f,t_6}=\V e\}}
    \end{align*}
    Denote the right side by $\mathbf{1}_{\Delta}$. Let $X=\{a<b<c,d<e<f,1\leq t_i\leq m\}$, $A_i=X\cap\{|\{a,b,c\}\cap\{d,e,f\}|=i\}$, then similarly to \eqref{counttriangle}, by using Theorem \ref{thm3.1} for each possible shape that can be formed by combining of two triangles, we have
    \begin{align*}
        &\e(\sum_{A_0}\mathbf{1}_\Delta)\leq (\e(T_n)+O(1))^2=\e(T_n)+O(\log(n)),\\
        &\e(\sum_{A_1}\mathbf{1}_\Delta)=O(1),\\
        &\e(\sum_{A_2}\mathbf{1}_\Delta)=O(1),\\
        &\e(\sum_{A_3}\mathbf{1}_\Delta)=O(\log(n)).
    \end{align*}
    By adding them together, we finish the proof.
\end{proof}
Now we present the proof of Theorem \ref{main-triangle-count}. Notice that $T_n$ is a non-decreasing process. For any constants $M,\varepsilon>0$, take $a_i=\text{e}^{(1+\frac{\varepsilon}{2})^i\log(M)}$, then
\begin{align*}
    \p(|T_{a_i}-\e(T_{a_i})|\geq \frac{\varepsilon}{2}\e(T_{a_i}))\leq O(\varepsilon^{-2} \log(a_i)^{-1})),
\end{align*}
and
\begin{align}
    \p(\exists \ n\geq M, |T_{n}-\e(T_{n})|\geq \varepsilon\e(T_{n}))\leq &\sum_{i=0}^{\infty}\p(|T_{a_i}-\e(T_{a_i})|\geq \frac{\varepsilon}{2}\e(T_{a_i}))\notag\\
    &\leq O(\varepsilon^{-3}\log(M)^{-1}).\label{eq:triangle-count-2}
\end{align}
The right side of \eqref{eq:triangle-count-2} tends to $0$ as $M\rightarrow \infty$, giving Theorem \ref{main-triangle-count}.

\section{Maximum degree}\label{maxdegsec}
 Before proving Theorem \ref{main-maximum-degree}, we need the following modifications of Lemmas \ref{me} and \ref{me2}.
\begin{lemma}\label{me3}
    For constants $C,c_n\geq 0$, let $X_n$ be any random process satisfying 
    \begin{align*}
        &\e(X_{n+1})\leq (1+c_n)X_n,\\
        &X_{n+1}=X_n \text{~or~} X_n+1\leq X_{n+1}\leq X_n+C.
    \end{align*}
    Then 
    \begin{align*}
        \p\left(\left.\exists \ n\geq N, X_n\geq M\prod\limits_{i=N}^{n-1}(1+c_i) \right|\mathcal{F}_{X_N}\right)\leq \exp\left(-\frac{(M-X_N)^2}{8MC^2}\right).
    \end{align*}
\end{lemma}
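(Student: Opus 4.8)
The plan is to run the argument of Lemma~\ref{me} in the degenerate case where the additive term vanishes and the multiplicative rate $C_1n^{-1}$ is replaced by the prescribed sequence $c_n$ (with $C_3=C$). We may assume $C\ge 1$: when $C<1$ the alternative $X_n+1\le X_{n+1}\le X_n+C$ is vacuous, so $X_n\equiv X_N$ and there is nothing to prove. We may also assume $M>0$ and $0\le X_N<M$, since for $X_N\ge M$ the event has probability $1$ and the exponent is to be read as $0$, exactly like the $\max(0,\cdot)$ in Lemma~\ref{me}. Throughout, $\mathcal{F}_{X_n}=\sigma(X_0,\dots,X_n)$, the variables $X_n$ are nonnegative, and $\e(X_{n+1})$ is understood as $\e(X_{n+1}\mid\mathcal{F}_{X_n})$. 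Put $b_n:=\prod_{i=N}^{n-1}(1+c_i)$, so that $b_N=1$ and $(b_n)_{n\ge N}$ is nondecreasing with $b_n\ge 1$, and set $Y_n:=X_n/b_n$. Since $b_{n+1}=(1+c_n)b_n$, the hypothesis yields $\e(Y_{n+1}\mid\mathcal{F}_{X_n})\le Y_n$, so $(Y_n)$ is a nonnegative supermartingale, and the event in the statement equals $\{\exists\,n\ge N:\ Y_n\ge M\}$. Writing $\tau:=\inf\{n\ge N:\ Y_n\ge M\}$, it remains to bound $\p(\tau<\infty\mid\mathcal{F}_{X_N})$.

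First I would record the increment estimates, which are only needed on $\{n<\tau\}$, where $X_n<Mb_n$. With $D_n:=X_{n+1}-X_n\in\{0\}\cup[1,C]$, the supermartingale condition says $\mu_n:=\e(D_n\mid\mathcal{F}_{X_n})\in[0,c_nX_n]$, and $D_n^2\le CD_n$ gives $\e(D_n^2\mid\mathcal{F}_{X_n})\le C\mu_n\le Cc_nX_n$. Conditionally on $\mathcal{F}_{X_n}$ only $X_{n+1}$ is random in $Y_{n+1}=X_{n+1}/b_{n+1}$, and it ranges over a set of diameter at most $C/b_{n+1}\le C$, so the centred increment satisfies $|Y_{n+1}-\e(Y_{n+1}\mid\mathcal{F}_{X_n})|\le C$ and
\[
\mathrm{Var}(Y_{n+1}\mid\mathcal{F}_{X_n})=b_{n+1}^{-2}\,\mathrm{Var}(X_{n+1}\mid\mathcal{F}_{X_n})\le b_{n+1}^{-2}Cc_nX_n\le CM\,\frac{b_{n+1}-b_n}{b_{n+1}^{2}}\le CM\Bigl(\tfrac1{b_n}-\tfrac1{b_{n+1}}\Bigr),
\]
using $c_nb_n=b_{n+1}-b_n$ and $b_{n+1}\ge b_n$. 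The point of phrasing the bound this way is that $\sum_{i\ge N}\bigl(\tfrac1{b_i}-\tfrac1{b_{i+1}}\bigr)=\tfrac1{b_N}-\lim_i\tfrac1{b_i}\le 1$ telescopes.

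Then I would run the exponential-supermartingale argument. For $0\le\theta\le 1/C$, using $\e(Y_{n+1}\mid\mathcal{F}_{X_n})\le Y_n$ and $\mathrm{e}^{x}\le 1+x+x^2$ on $|x|\le 1$,
\[
\e\bigl(\mathrm{e}^{\theta(Y_{n+1}-Y_n)}\mid\mathcal{F}_{X_n}\bigr)\le\e\bigl(\mathrm{e}^{\theta(Y_{n+1}-\e(Y_{n+1}\mid\mathcal{F}_{X_n}))}\mid\mathcal{F}_{X_n}\bigr)\le 1+\theta^2\mathrm{Var}(Y_{n+1}\mid\mathcal{F}_{X_n})\le\exp\!\Bigl(\theta^2CM\bigl(\tfrac1{b_n}-\tfrac1{b_{n+1}}\bigr)\Bigr)
\]
on $\{n<\tau\}$. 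Consequently $Z_n:=\exp\bigl(\theta Y_{n\wedge\tau}-\theta^2CM\sum_{i=N}^{(n\wedge\tau)-1}(\tfrac1{b_i}-\tfrac1{b_{i+1}})\bigr)$ is a bounded supermartingale with $Z_N=\exp(\theta X_N)$, hence $\e(Z_n\mid\mathcal{F}_{X_N})\le\exp(\theta X_N)$; on $\{\tau\le n\}$ we have $Y_\tau\ge M$ and the sum is $\le 1$, so $Z_n\ge\exp(\theta M-\theta^2CM)$, and Markov's inequality gives $\p(\tau\le n\mid\mathcal{F}_{X_N})\le\exp(-\theta(M-X_N)+\theta^2CM)$. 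Letting $n\to\infty$ and choosing $\theta=(M-X_N)/(2CM)$ (admissible, since $0\le X_N<M$ and $C\ge 1$) yields $\p(\tau<\infty\mid\mathcal{F}_{X_N})\le\exp\bigl(-(M-X_N)^2/(4CM)\bigr)$, which is $\le\exp\bigl(-(M-X_N)^2/(8MC^2)\bigr)$ because $C\ge 1$.

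I expect the only delicate point is the variance estimate and its telescoping form: one must keep the conditioning on $\{n<\tau\}$ so that $X_n<Mb_n$ is available, use $\e(D_n^2\mid\mathcal{F}_{X_n})\le C\mu_n\le Cc_nX_n$ rather than the crude $\e(D_n^2\mid\mathcal{F}_{X_n})\le C^2$, and expand the exponential around $\e(Y_{n+1}\mid\mathcal{F}_{X_n})$ rather than around $Y_n$ — expanding around $Y_n$ introduces a squared-drift term of order $(c_nM)^2$ whose sum over $n$ need not converge for a general sequence $(c_n)$. Everything else is the standard optional-stopping routine, and the gap between the constants $4$ and $8C$ is immaterial.
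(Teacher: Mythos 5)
Your proposal is correct and follows essentially the same route as the paper's own proof: pass to the supermartingale $Y_n=X_n\prod_{i=N}^{n-1}(1+c_i)^{-1}$, stop at $\tau=\inf\{n\ge N: Y_n\ge M\}$, bound the exponential increments using $X_n\le M\prod_{i=N}^{n-1}(1+c_i)$ before $\tau$ together with the telescoping sum $\sum_i(1/b_i-1/b_{i+1})\le 1$, and conclude by Markov's inequality with the optimized $\theta$. Your only deviations (centering the exponential at the conditional mean, and the explicit reductions to $C\ge 1$ and $X_N<M$) are harmless tidy-ups of steps the paper leaves implicit, and the resulting constant is even slightly better before you relax it to the stated $8MC^2$.
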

\begin{proof}
    Define $Y_n=\prod\limits_{i=N}^{n-1}(1+c_i)^{-1}X_n$, $\tau=\inf \{s\geq N\mid Y_s\geq M\}$, then $Y_n$ is a supermartingale, and for $n\leq \tau$, $\theta\leq C^{-1}$, 
    \begin{align*}
        \e(\exp(\theta(Y_{n+1}-Y_n))\mid \mathcal{F}_{X_n})\leq 1+2\theta^2MC^2c_n\prod\limits_{i=N}^{n}(1+c_i)^{-1}.
    \end{align*}
    Then
    \begin{align}
        \e(\exp(\theta Y_n))\leq \exp(2\theta^2MC^2+\theta Y_N).\label{eqme3}
    \end{align}
    Taking $\theta=\frac{M-Y_N}{4MC^2}$ in \eqref{eqme3} completes the proof of the lemma.
\end{proof}
Similarly, for the upper bound, we have
\begin{lemma}\label{me4}
     For constants $C,c_n\geq 0$, let $X_n$ be any random process satisfying 
    \begin{align*}
        &\e(X_{n+1})\geq (1+c_n)X_n,\\
        &X_{n+1}=X_n \text{~or~} X_n+1\leq X_{n+1}\leq X_n+C.
    \end{align*}
    Then for $M\leq X_N-8C^2$, 
    \begin{align*}
        \p\left(\left.\exists \ n\geq N,\ X_n\leq M\prod\limits_{i=N}^{n-1}(1+c_i) \right|\mathcal{F}_{X_N}\right)\leq 2\exp\left(-\frac{(M-X_N)^2}{16X_NC^2}\right).
    \end{align*}
\end{lemma}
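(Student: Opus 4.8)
The plan is to carry out, for the lower deviation, the exact mirror of the argument proving Lemma~\ref{me3}. Put $a_{n-1}:=\prod_{i=N}^{n-1}(1+c_i)$ (so $a_{N-1}=1$) and $Y_n:=a_{n-1}^{-1}X_n$. The hypothesis $\e(X_{n+1}\mid\mathcal F_{X_n})\ge(1+c_n)X_n$ makes $Y_n$ a nonnegative submartingale with $Y_N=X_N$, and $\{\exists\,n\ge N:\ X_n\le M a_{n-1}\}$ is precisely $\{\tau<\infty\}$ for $\tau:=\inf\{s\ge N\mid Y_s\le M\}$, so it suffices to bound $\p(\tau<\infty\mid\mathcal F_{X_N})$; we may assume $0<M<X_N$ since otherwise the event is impossible or forces $Y_n\equiv0$.

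First I would reduce to equality in the drift, exactly as in the proof of Lemma~\ref{me2}: thin each jump of $X_n$ to produce a dominated process $X'_n\le X_n$ with $X'_N=X_N$, increments still in $\{0\}\cup[1,C]$, and $\e(X'_{n+1}\mid\mathcal F_{X'_n})=(1+c_n)X'_n$; since $\{X_n\le Ma_{n-1}\}\subseteq\{X'_n\le Ma_{n-1}\}$ it suffices to treat $X'_n$, which we rename $X_n$. Now $Y_n$ is a martingale, and two facts become available. First, $c_nX_n=\e(X_{n+1}-X_n\mid\mathcal F_{X_n})\le C$, so $Y_{n+1}-Y_n=a_n^{-1}\big((X_{n+1}-X_n)-c_nX_n\big)$ lies in $a_n^{-1}[-C,C]$, giving bounded increments $|Y_{n+1}-Y_n|\le C$. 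Second, the cross term cancels, so $\e\big((Y_{n+1}-Y_n)^2\mid\mathcal F_{X_n}\big)\le C\,\e(X_{n+1}-X_n\mid\mathcal F_{X_n})\,a_n^{-2}=C c_n X_n a_n^{-2}$. To sum these conditional variances I need a pathwise upper bound on $X_n$, and here is the main obstacle: unlike in Lemma~\ref{me3}, where staying below the barrier forces $X_n<M a_{n-1}$, staying above the barrier only yields the lower bound $X_n>M a_{n-1}$. I would remove this by a one-sided localisation: since the drift now holds with equality, Lemma~\ref{me3} applies with level $2X_N$, giving $\p(\sigma<\infty\mid\mathcal F_{X_N})\le\exp(-X_N/(16C^2))\le\exp\!\big(-(X_N-M)^2/(16X_NC^2)\big)$ for $\sigma:=\inf\{n\ge N\mid X_n\ge 2X_N a_{n-1}\}$, the last inequality because $0<M<X_N$. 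On $\{\sigma=\infty\}$ one has $X_n<2X_N a_{n-1}$ for all $n$, so by the telescoping identity $\sum_{n\ge N}c_n a_n^{-1}=\sum_{n\ge N}c_n\prod_{i=N}^{n}(1+c_i)^{-1}\le1$ the total conditional quadratic variation satisfies $\sum_{n\ge N}C c_n X_n a_n^{-2}\le 2CX_N$.

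Finally I would imitate the exponential step of Lemma~\ref{me3} on the stopped martingale $Y_{n\wedge\tau\wedge\sigma}$: for $\theta\in(0,C^{-1}]$, using $e^x\le1+x+x^2$ on $|x|\le1$ (valid because $|\theta(Y_{n+1}-Y_n)|\le\theta C\le1$), the process $\exp(-\theta Y_{n\wedge\tau\wedge\sigma})\prod_{i=N}^{(n\wedge\tau\wedge\sigma)-1}\big(1+2\theta^2C c_i X_i a_i^{-2}\big)^{-1}$ is a supermartingale; optional stopping, Markov's inequality, and the bound $\prod_i(1+2\theta^2C c_i X_i a_i^{-2})\le\exp(4\theta^2CX_N)$ on $\{\sigma=\infty\}$ give
\[
\p(\tau\le n,\ \sigma=\infty\mid\mathcal F_{X_N})\le\exp\!\big(-\theta(X_N-M)+4\theta^2CX_N\big).
\]
Choosing $\theta=(X_N-M)/(8CX_N)$, which lies in $(0,C^{-1}]$ since $X_N-M<X_N$, yields $\exp\!\big(-(X_N-M)^2/(16CX_N)\big)\le\exp\!\big(-(X_N-M)^2/(16C^2X_N)\big)$ using $C\ge1$ (which is forced by $X_n+1\le X_{n+1}\le X_n+C$). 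Adding the contribution of $\{\sigma<\infty\}$ and letting $n\to\infty$ gives the claimed $2\exp\!\big(-(M-X_N)^2/(16X_NC^2)\big)$; the hypothesis $M\le X_N-8C^2$ only ensures the target deviation is large compared with a single increment, for $M$ closer to $X_N$ the stated bound already exceeds $1$. The point that I expect to need the most care is the localisation — making precise that invoking Lemma~\ref{me3} after the thinning reduction is legitimate (it is the equality of the drift that licenses it) and that the extra event it introduces costs only a term already dominated by the target bound.
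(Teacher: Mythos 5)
Your proposal is correct, and it shares the paper's skeleton (normalize to $Y_n=\prod_{i=N}^{n-1}(1+c_i)^{-1}X_n$, reduce to equality in the drift by thinning as in Lemma \ref{me2}, then run an exponential Chernoff bound via $\mathrm{e}^x\le 1+x+x^2$ and optimize $\theta$), but it diverges at exactly the step you flagged as delicate: controlling the accumulated conditional variance $\sum_n \theta^2 C c_n X_n a_n^{-2}$ when $Y_n$ has no pathwise upper bound. The paper handles this without any localisation by tracking $\exp(-\theta Y_n)+\exp(-\theta Y_N)$ instead of $\exp(-\theta Y_n)$ and absorbing the variance term using that $y\mapsto y\mathrm{e}^{-\theta y}$ is maximized at $\theta^{-1}$; this is where the constraint $\theta\ge Y_N^{-1}$ enters, and it is precisely the hypothesis $M\le X_N-8C^2$ that makes the optimal $\theta=(Y_N-M)/(8Y_NC^2)$ satisfy it. Your route instead introduces the upper barrier $2X_Na_{n-1}$, controls its crossing probability by Lemma \ref{me3} (legitimate after the thinning, since the drift then holds with equality), and pays a second error term that you correctly check is dominated by the target; a pleasant byproduct is that you never need $\theta\ge Y_N^{-1}$, so your argument does not actually use the hypothesis $M\le X_N-8C^2$ at all (only $M<X_N$, plus the reduction to $M>0$). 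Both arguments produce the prefactor $2$, for different reasons. Two minor points to tidy in a write-up: the reduction to $0<M$ should be stated as ``for $M\le 0$ the event is empty since $X_n\ge X_N>M\ge Ma_{n-1}$,'' and the remark that $C\ge 1$ deserves a sentence (if the second branch of the increment condition never occurs the process is constant and the claim is degenerate, exactly as in the paper's own proof).
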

\begin{proof}
    Define $Y_n=\prod\limits_{i=N}^{n-1}(1+c_i)^{-1}X_n$,  $\tau=\inf \{s\geq N\mid Y_s\leq M\}$, then $Y_n$ is a submartingale, and for $n\leq \tau$, $\theta\leq C^{-1}$, 
    \begin{align*}
        \e(-\theta(Y_{n+1}-Y_n)\mid \mathcal{F}_{X_n})\leq 1+2\theta^2Y_nC^2c_n\prod\limits_{i=N}^{n}(1+c_i)^{-1}.
    \end{align*}
    Then for $Y_N^{-1}\leq \theta \leq C^{-1}$, 
    \begin{align*}
        &\e(\exp(-\theta Y_{n+1})+\exp(-\theta Y_N)\mid \mathcal{F}_{X_n})\\
        \leq& \left(1+2\theta^2Y_nC^2\prod\limits_{i=N}^{n}(1+c_i)^{-1}c_n\right)\exp(-\theta Y_n)+\exp(-\theta Y_N)\\
        \leq &\left(1+4\theta^2Y_NC^2\prod\limits_{i=N}^{n}(1+c_i)^{-1}c_n\right)(\exp(-\theta Y_{n})+\exp(-\theta Y_N)).
    \end{align*}
    Then
    \begin{align}
        \e(\exp(-\theta Y_n)+\exp(-\theta Y_N))\leq 2\exp(-\theta Y_N+4\theta^2C^2Y_N). \label{eqme4}
    \end{align}
    Take $\theta=\frac{Y_N-M}{8Y_NC^2}$ in \eqref{eqme4} to complete the proof of the lemma.
\end{proof}
Now we start proving (1) in Theorem \ref{main-maximum-degree}. First, we will prove the upper bound, that is, with high probability, the maximum degree in the network will not be too large.

Suppose $\degc{i}{n}$ is the degree of vertex $i$ in RGG at time $n$. Taking the constants $1<C_3<C$ sufficiently large, $B=\lfloor Cp^{-1}\log(p^{-1})\rfloor$. Since for each other vertex, the probability that it connects to $i$ is $p$ independently. Then, by the large deviation of i.i.d. random variables, we have the following inequality,
\begin{align}
    \p\left(\degc{i}{B} \geq 5C\log(p^{-1})\right)\leq \exp\left(-C\log(p^{-1})\right).\label{maxdegstes}
\end{align}
Define events 
\begin{equation}
\begin{split}
A := \bigcup_{i=B+1}^{\infty} \left\{(2+\delta)mpi-m(\log(p^{-1}))^{\frac{1}{C_3}}(pi)^{\frac{C_3-1}{C_3}} \leq L(i) \right. \\
\left. \leq (2+\delta)mpi+m(\log(p^{-1}))^{\frac{1}{C_3}}(pi)^{\frac{C_3-1}{C_3}}-m \right\}.
\end{split}\label{maxdegAdef}
\end{equation}
Then by Theorem \ref{thm2.10}
\begin{align*}
    \p(A^c)&\leq \sum\limits_{i=\lceil Cp^{-1}\log(p^{-1}) \rceil}^{\infty}C_1\exp\left(-C_2\log(p^{-1})^{\frac{\alpha}{C_3}}(pi)^{\frac{C_3-\alpha}{C_3}}\right)\\
    &\leq C_1\exp\left(-C_2\log(p^{-1})C^{\frac{C_3-\alpha}{C_3}}+O(\log(p^{-1}))\right).
\end{align*}
Define
\begin{equation*}
    c_n:=\frac{p}{(2+\delta)pn-\left(\log(p^{-1})\right)^{\frac{1}{C_3}}(pn)^{\frac{C_3-1}{C_3}}}.
\end{equation*}
Recall that $\wdb{i}{n}$ is the weight of the vertex $\V i$ at time $n$, which is $\dega{i}{n}+m\delta$. Then on the event $A$, 
\begin{align*}
    &\e(\wdb{i}{n+1}\mid \mathcal{F}_n)\leq (1+c_n)\wdb{i}{n},\\
    &\wdb{i}{n+1}=\wdb{i}{n} \text{~or~} \wdb{i}{n}+1\leq \wdb{i}{n+1}\leq \wdb{i}{n}+m.
\end{align*}
Notice that $\prod\limits_{i=B}^{n}(1+c_i)=\left(1+O\left(C^{\frac{1}{C_3}}\right)\right)\left(\frac{n}{B}\right)^{\frac{1}{2+\delta}}$, then by Lemma \ref{me3} and \eqref{maxdegstes}, for $i\leq B$, 
\begin{align*}
    \p\left(\wdb{i}{n}\geq 10C\log(p^{-1})\left(\frac{n}{B}\right)^{\frac{1}{2+\delta}}\right)\leq \exp\left(-\frac{C\log(p^{-1})}{m^2}\right)+\exp\left(-C\log(p^{-1})\right).
\end{align*}
For $i>B$, using Lemma \ref{me3} again, we have
\begin{align*}
    \p\left(\wdb{i}{n}\geq 20C\log(p^{-1})\left(\frac{n}{B}\right)^{\frac{1}{2+\delta}}\right)\leq \exp\left(-\frac{C(\frac{i}{B})^{\frac{1}{2+\delta}}\log(p^{-1})}{m^2}\right).
\end{align*}
Taking a union bound, we can get the following upper bound:
\begin{align}
    &\p\left(\max\limits_{1\leq i\leq n}(\dega{i}{n})\geq 10C^{\frac{1+\delta}{2+\delta}}\log(p^{-1})^{\frac{1+\delta}{2+\delta}}(np)^{\frac{1}{2+\delta}}\right)\notag\\
    \leq &2Cp^{-1}\log(p^{-1})\exp\left(-\frac{C\log(p^{-1})}{m^2}\right)\notag \\
    +&\sum\limits_{i=
    \lceil Cp^{-1}\log(p^{-1})\rceil }^{\infty}\exp\left(-\frac{C^{\frac{1+\delta}{2+\delta}}i^{\frac{1}{2+\delta}}p^{\frac{1}{2+\delta}}\log(p^{-1})^{\frac{1+\delta}{2+\delta}}}{m^2}\right).\label{upmaxdeg}
\end{align}

Note that the RHS in \eqref{upmaxdeg} converges to $0$ when $C\rightarrow \infty$ or $C> m^2, p\rightarrow 0$. Therefore, we complete the proof of the upper bound of (1) in Theorem \ref{main-maximum-degree}.


 To prove the lower bound, we need the following lemma.
 \begin{lemma}\label{lem5.4}
 For any $C_1,\varepsilon>0$, there exists $C_2>0$ such that for any $p\in (0,1)$, when $B\geq C_2p^{-1}\log(p^{-1})$, then 
     \begin{align*}
         \p(\exists \ 1\leq i \leq B,\dega{i}{B}\geq C_1\log(p^{-1}))\geq 1-\varepsilon p^{10}.
     \end{align*}
 \end{lemma}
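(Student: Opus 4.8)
The plan is to force the existence of one high‑degree vertex by partitioning $S$ into $\Theta(p^{-1})$ disjoint \emph{cells}, showing that the earliest vertex of each cell has probability at least $p^{\beta}$ — with $\beta$ made arbitrarily small by enlarging $C_2$ — of reaching degree $C_1\log p^{-1}$ by time $B$, and that the cell events can be coupled to be independent; since there are $\Theta(p^{-1})$ cells this yields success with probability $1-\varepsilon p^{10}$. First I would reduce: the event $\{\exists\, i\le B:\dega iB\ge C_1\log p^{-1}\}$ is monotone increasing in $B$, so it suffices to treat $B=\lceil C_2p^{-1}\log p^{-1}\rceil$; the regime where $p$ is bounded away from $0$ is immediate because the graph then already has $\Omega(C_2)$ vertices and a crude lower bound on its maximum degree suffices once $C_2$ is large, so we may assume $p$, hence $r$, is small.

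Next I set up the geometry and the good events. Take a maximal $r$‑separated set $z_1,\dots,z_\ell$ on $S$, so the caps $Q_k:=B(z_k,r/2)$ are pairwise disjoint, each has area at least $\gamma_d p$ for a dimensional constant $\gamma_d>0$ (the ratio $\mathrm{area}(B(z,r/2))/\mathrm{area}(B(z,r))$ is a continuous positive function of $r$ bounded below by a dimensional constant), and $\ell\gtrsim p^{-1}$. Fix $N_0=\lceil A\,p^{-1}\log p^{-1}\rceil$ with a constant $A=A(\varepsilon,m,\delta,d)$ large enough that, by Theorem \ref{thm2.10} applied with a fixed error tolerance (so its exponent equals $1$, $d$ being fixed), the event $\mathcal G:=\{L(j)\le(3+\delta)mpj\ \forall\, j\ge N_0\}$ satisfies $\p(\mathcal G^c)\le\tfrac13\varepsilon p^{10}$; this is where the need for a genuinely linear exponent (only available for finite fixed $d$) enters, since it lets $N_0$ be of order $p^{-1}\log p^{-1}$. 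Let $\mathcal G'$ be the event that every cell $Q_k$ contains one of $\V1,\dots,\V{N_0}$ and at least $\tfrac12\gamma_dC_2\log p^{-1}$ of $\V{N_0+1},\dots,\V B$; a Chernoff bound together with a union bound over the $\ell$ cells gives $\p(\mathcal G'^c)\le\tfrac13\varepsilon p^{10}$ for $C_2$ large.

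Now fix a cell $k$, let $\V{i_k}$ be its first vertex, and track $\dega{i_k}{j}$ for $j\in(N_0,B]$. Any later vertex $\V j\in Q_k$ has $D(\V{i_k},\V j)<r$, so $\V{i_k}$ lies in its candidate set, and on $\mathcal G$ at least one edge of $\V j$ attaches to $\V{i_k}$ with conditional probability at least a constant times $\dega{i_k}{j-1}/L(j)\ge \dega{i_k}{j-1}/\big((3+\delta)mpj\big)$. Since only one vertex is inserted per step and it lies in at most one $Q_k$, and this lower bound depends only on cell‑$k$ data and the deterministic $j$, a one‑insertion‑at‑a‑time coupling shows that on $\mathcal G\cap\mathcal G'$ the family $(\dega{i_k}{B})_{k\le\ell}$ stochastically dominates independent copies of a pure‑birth chain $Z^{(k)}$ driven by the insertions of $Q_k$ in $(N_0,B]$, started from $Z^{(k)}_{N_0}=\dega{i_k}{N_0}\ge m\ge1$. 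Because the denominator $L(j)$ grows linearly in $j$ while cell insertions arrive at rate $\ge\gamma_d p$, after the change of variable $u=\log j$ this chain dominates a Yule process of per‑individual rate $\lambda_0:=\tfrac{\gamma_d}{2(3+\delta)}$ run for time $\tau=\log(B/N_0)=\log(C_2/A)+o(1)$. For such a Yule process started from one individual the size at time $\tau$ is geometric, so $\p(\text{size}\ge k)=(1-e^{-\lambda_0\tau})^{k-1}$; with $e^{-\lambda_0\tau}=(C_2/A)^{-\lambda_0}$ and $k=\lceil C_1\log p^{-1}\rceil$ this is at least $p^{\beta}$ with $\beta=\beta(C_2)\le 2C_1(C_2/A)^{-\lambda_0}\to0$ as $C_2\to\infty$.

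Choosing $C_2$ so large that $\beta\le\tfrac12$ and all the earlier "$C_2$ large" requirements hold, independence of the dominating chains gives
\[
\p\big(\dega{i_k}{B}<C_1\log p^{-1}\ \forall\, k\le\ell\big)\le (1-p^{\beta})^{\ell}\le \exp\!\big(-c\,p^{-1+\beta}\big)\le\exp\!\big(-c\,p^{-1/2}\big)\le\tfrac13\varepsilon p^{10}
\]
for small $p$; adding $\p(\mathcal G^c)+\p(\mathcal G'^c)$ finishes the proof. The main obstacle is exactly the single‑cell tail estimate: a naive martingale bound only shows $\dega{i_k}{B}=\Theta(1)$ typically, so degree $C_1\log p^{-1}$ is a large deviation, and one must avoid paying a cost such as "all early opportunities attach to $\V{i_k}$", which would leave the exponent bounded below. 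The gain comes from the fact that $L(j)$ is only of order $pj$, making the early part of a cell's evolution a fast Yule process whose time window $[N_0,B]$ has Yule‑length $\log(C_2/A)$ growing with $C_2$; the exponential tail of the Yule martingale limit then yields the vanishing exponent $\beta\propto(C_2/A)^{-\lambda_0}$. Balancing "$N_0$ large enough for the concentration of $L(n)$" against "$B/N_0$ large" — the freedom in $C_2$ is spent precisely here — is the delicate bookkeeping point, while the cross‑cell decoupling is a secondary issue handled by using only cell‑local lower bounds on attachment probabilities together with the disjointness of the $Q_k$.
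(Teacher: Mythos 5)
Your overall architecture coincides with the paper's: partition $S$ into $\Theta(p^{-1})$ disjoint caps, show each cap's distinguished early vertex reaches degree $C_1\log p^{-1}$ with probability at least $p^{\beta}$ for some small $\beta$, and beat $\varepsilon p^{10}$ by multiplying the $\Theta(p^{-1})$ (conditionally lower-bounded, hence effectively independent) failure probabilities; the reduction of the $p$ bounded-away-from-$0$ case to ``$\dega{1}{n}\to\infty$ a.s., take $C_2$ large'' is also identical. Where you genuinely diverge is the single-cap estimate. The paper pays the power of $p$ up front by forcing a specific rare event --- all $m$ edges of the first $\Theta(\log(p^{-1})/(dm))$ vertices landing in the inner ball attach to $V_{b_1}$, at cost $p^{0.1}$ --- after which the degree is already $\Theta(\log p^{-1})$ and a first-moment/Markov argument over the remaining $\lceil Cp^{-1}\log p^{-1}\rceil$ steps finishes (at the price of $C_2$ exponential in $C_1$). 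You instead treat the whole window $(N_0,B]$ as a pure-birth chain with birth probability $\asymp Z_{j-1}/j$ per step and read the cost $p^{\beta}$, $\beta=O(C_1(A/C_2)^{\lambda})\to 0$, off the geometric marginal of the limiting Yule process; this is cleaner, gives a polynomial rather than exponential dependence of $C_2$ on $C_1$, and makes transparent why enlarging $C_2$ shrinks the exponent. One caution: the step you compress into ``dominates a Yule process ... the size at time $\tau$ is geometric'' is where the real work sits, and it cannot be replaced by the naive bound that schedules the $k-1$ births into prescribed subintervals --- that route yields only $p^{C_1\log(\log\log p^{-1}/\lambda\tau)}$, whose exponent diverges as $p\to 0$. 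You need the full strength of the geometric law, e.g.\ by stochastically dominating the holding time of the chain in state $n$ (in the variable $u=\log j$) by $E_n/n$ with $E_n$ i.i.d.\ exponential of rate $\lambda(1-o(1))$ (the $o(1)$ coming from $n\le C_1\log p^{-1}\ll N_0$) and then invoking R\'enyi's identity $\sum_{n\le k-1}E_n/n\overset{d}{=}\max_{n\le k-1}E_n$ to get $(1-e^{-\lambda\tau})^{k-1}$. With that computation supplied, and the usual stopping-time device to replace conditioning on the future-measurable event $\mathcal G$ by freezing the chains when the $L(\cdot)$ bound first fails, your proof is correct.
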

\begin{proof}
     For the unit area ball $S\subset \mathbb{R}^d$, we can put $s\geq\lfloor (3^d p)^{-1}\rfloor$ non-intersecting balls with radius $\frac{3}{2}r$ noted by $B_{x_1},B_{x_2}\dots, B_{x_{s}}$. We can assume the vertices are generated by a Poisson point process with intensity $M$, then the vertices distribution in these balls are independent, suppose the number of vertices in this process is $M'$, then
     \begin{align*}
         \p(M'\leq 2M)\geq 1-exp\left(-\frac{M}{10}\right).
     \end{align*}
    Recall that $\gV{n}=\{ \V 1,\V 2\dots, \V n\} $. In each ball $B_x=B(x,\frac{3}{2}r)$, we say that the ball is good if there are at least $\lceil (\frac{3}{2})^{d-1}pM\rceil$ vertices in $B_x$, and
    \begin{align*}
         \#\left\{\gV{i}\cap B\left(x,\frac{1}{2}r\right)\right\}\geq 3^{-d-1}\#\left\{\gV{i}\cap B\left(x,\frac{3}{2}r\right)\right\}
     \end{align*}for every $i>0$. Suppose that the vertices in $B(x,\frac{1}{2}r)$ are $\V {b_1},\V {b_2} \dots, \V {b_{t}}$. Then 
    \begin{align*}
        \p\left(B\left(x,\frac{3}{2}r\right) \text{ is good}\right)\geq 3^{-d-10}
    \end{align*}
    and when the ball is good, 
    \begin{equation}
    \begin{split}  
        &\p\left(\text{all $m$ edges from } \V {b_i} \text{ connect to } \V{b_1},\ \forall \ 1\leq i\leq \frac{\log(p^{-1})}{100dm}\right)\\
        \geq &\left(3^{-(d+1)}(2+\delta)\right)^{\frac{\log(p^{-1})}{100d}}\geq p^{0.1}\label{maxdeggoodball}
    \end{split} 
    \end{equation}
    When the event in LHS of \eqref{maxdeggoodball} is hold, for $i\geq \frac{\log(p^{-1})}{100dm}$, 
    \begin{align*}
        \p((\V{b_i},\V{b_1})\in GPM_{b_i}\mid \mathcal{F}_{b_i-1})\geq \frac{\log(p^{-1})}{100d*3^di}
    \end{align*}
    Take $M\geq \frac{2^{d-1}\log(p^{-1})}{100dpm3^{d-1}}\exp(200d\times 3^dC_1)$. Then for each good ball, conditioned on the event in LHS of \eqref{maxdeggoodball} holds, 
    \begin{align*}
        \sum\limits_{i=\lceil \frac{\log(p^{-1})}{100dm}\rceil}^{B}\p((\V{b_i},\V{b_1})\in GPM_{b_i}\mid \mathcal{F}_{b_i-1})\geq \frac{3}{2}C_1\log(p^{-1}).
    \end{align*}
    By Markov's inequality, we have
    \begin{align*}
        \p(\dega{b_1}{M'}\geq C_1\log(p^{-1}))\geq \frac{1}{3}.
    \end{align*}
    Then with probability at least $3^{-d-11}p^{0.1}$ there exists a vertex $\V i$ in this ball such that $\dega{i}{M'}\geq C_1\log(p^{-1})$. Since there are at least  $(3^d p)^{-1}$ balls, take $B=2M$,
    \begin{align}
        &\p\left(\exists \ 1\leq i \leq B,\ \dega{i}{B}\geq C_1\log(p^{-1})\right)\notag\\
        \geq&\p\left(\exists \ 1\leq i\leq s, \V j\in B_i,\dega{j}{M'}\geq C_1\log(p^{-1})\right)-\p(M'\geq 2M)\notag\\
        \geq &1-\left(1-3^{-d-11}p^{0.1}\right)^{s}-\exp\left(-\frac{M}{10}\right)
        \geq 1-\exp\left(3^{-11}p^{-0.9}\right)-\exp\left(-\frac{M}{10}\right).\label{maxdeglwbnd}
    \end{align}
     For $p$ sufficiently small, \eqref{maxdeglwbnd} is larger than $1-\varepsilon p^{10}$, and for the other $p$, since $C_1\log(p^{-1})=O(1)$, and almost surely, $\lim\limits_{n\rightarrow \infty}\dega{1}{n}=\infty$, we only need to take $C_2$ sufficiently large. 
\end{proof}
Following the proof of the upper bound, let $A$ be the same event as in \eqref{maxdegAdef}, define 
\begin{equation*}
    c_n:=\frac{p}{(2+\delta)pn+(\log(p^{-1}))^{\frac{1}{C}}(pi)^{\frac{C-1}{C}}}.
\end{equation*}
Then conditioned on $A$, 
\begin{align*}
    &\e(\wdb{i}{n+1}\mid \mathcal{F}_n)\geq (1+c_n)\wdb{i}{n},\\
    &\wdb{i}{n+1}= \wdb{i}{n} \text{~or~} \wdb{i}{n}+1\leq \wdb{i}{n+1}\leq \wdb{i}{n}+m.
\end{align*}
Take $i$ in Lemma \ref{lem5.4} satisfying $\dega{i}{B}\geq C_1\log(p^{-1})$, let $X_n=\wdb{i}{n}$, $C=m$ in Lemma \ref{me4}, then we have
\begin{align}
    \p\left(\wdb{i}{n}\leq \frac{C_1}{2}\log(p^{-1})\left(\frac{n}{B}\right)^{\frac{1}{2+\delta}}\right)\leq 2\exp\left(-\frac{C_1\log(p^{-1})}{64m^2}\right).\label{maxdegfinallowerbnd}
\end{align}
Notice that the RHS in \eqref{maxdegfinallowerbnd} converges to $0$ when $C_1\rightarrow \infty$ or $p\rightarrow 0$. Then we finish the proof of the lower bound of (1) in Theorem \ref{main-maximum-degree}.
Combining the lower bound and the upper bound gives (1) in Theorem \ref{main-maximum-degree}.

Now we start proving (2) in Theorem \ref{main-maximum-degree}. Denote $X_n=\max_{1\leq i \leq n}(\wdb{i}{n})$, for any $N> 0$, by lemma \ref{me3}, 
\begin{align*}
    &\p\left(\exists \ n\geq N, X_n\geq (1+\varepsilon)X_N\left(\frac{n}{N}\right)^{\frac{1}{2+\delta}}\right)\\
    \leq &\sum_{i=1}^{\infty}\p\left(\exists \ n\geq N, \wdb{i}{n}\geq (1+\varepsilon)X_N\left(\frac{n}{N}\right)^{\frac{1}{2+\delta}}\right)\\
    \leq& N\exp\left(-\frac{\varepsilon^2X_N}{8m^2(1+\varepsilon)}\right)+\sum\limits_{i=N}^{\infty}\exp\left(-\frac{X_N\left(\frac{i}{N}\right)^{\frac{1}{2+\delta}}}{16m^2}\right).
\end{align*}
The right side converges to $0$ when $N\rightarrow \infty$. 

Similarly, according to Lemma \ref{me4},
\begin{align*}
    \p\left(\exists \ n\geq N, X_n\leq (1-\varepsilon)X_N\left(\frac{n}{N}\right)^{\frac{1}{2+\delta}}\right)\leq \exp\left(-\frac{\varepsilon^2X_N}{16m^2}\right)
\end{align*}
which also converges to $0$. 
As a result, almost surely $A_n=\frac{X_n}{\log(p^{-1})^{\frac{1+\delta}{2+\delta}}(np)^{\frac{1}{2+\delta}}}$ is a Cauchy sequence. This gives (2) in Theorem \ref{main-maximum-degree}.
\begin{conjecture}
When $p \rightarrow 0$, $X$ converges in distribution to a constant random variable. Or, equivalently, we can take $D_1=D_2$ in \eqref{eq:thm1.2-b}.
\end{conjecture}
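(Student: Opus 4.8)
The plan is to pass to the almost–sure limits furnished by part~(2) of Theorem~\ref{main-maximum-degree}. Write $\xi^{(p)}_i:=\lim_{n\to\infty}\dega{i}{n}\,n^{-1/(2+\delta)}$ and $\tilde X(p):=\sup_{i\ge1}\xi^{(p)}_i$, so that $X=\tilde X(p)\big/\big((\log p^{-1})^{(1+\delta)/(2+\delta)}p^{1/(2+\delta)}\big)$. It therefore suffices to prove that, as $p\to0$, $\tilde X(p)=(1+o_{\p}(1))\,c_\ast^{-(1+\delta)/(2+\delta)}(\log p^{-1})^{(1+\delta)/(2+\delta)}p^{1/(2+\delta)}$ for an explicit constant $c_\ast=c_\ast(m,\delta)>0$; this yields both forms of the conjecture, with $D_1=D_2=c_\ast^{-(1+\delta)/(2+\delta)}$ in \eqref{eq:thm1.2-b}.

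\emph{Localisation.} First I would make rigorous the picture sketched after Theorem~\ref{main-maximum-degree}. Partition $S$ into $K=(1+o(1))p^{-1}$ cells of area $(1-o(1))p$ and retain the $(1-o(1))$ fraction whose $r$-enlargements are pairwise disjoint; vertices in distinct retained cells are never adjacent. On the high–probability event of Proposition~\ref{prop:2.11} that $L(\cdot)$ stays within $(pn)^{1-o(1)}$ of $(2+\delta)mp\,(\cdot)$, every connection probability for an arrival into cell~$k$ agrees, up to a factor $1+o_{\p}(1)$, with the corresponding probability in a stand–alone preferential attachment model $\mathcal P_k$ whose vertices are the successive arrivals into (the enlargement of) cell~$k$. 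Since each cell collects $(1+o(1))pn$ of the first $n$ arrivals, $\mathcal P_k$ is, after rescaling real time by $p$, a standard model on $(1+o(1))pn$ vertices, the $\mathcal P_k$ are independent over $k$, and $\xi^{(p)}_i=(1+o_{\p}(1))\,p^{1/(2+\delta)}\xi^{\mathcal P_{k(i)}}_{j(i)}$, where $\xi^{\mathcal P}_j$ is the single–vertex martingale limit in the standard model. Using Lemmas~\ref{me3}--\ref{me4} both to see that only the $O(p^{-1})$ earliest arrivals into the cells can realise the supremum and that the late multiplicative fluctuation $\dega{i}{n}\big/\big(\dega{i}{B}(n/B)^{1/(2+\delta)}\big)$ concentrates at $1$ once $\dega{i}{B}$ is large, one reduces the whole problem to $\tilde X(p)=(1+o_{\p}(1))\,p^{1/(2+\delta)}\max_{1\le k\le K}\mu_k$, where $\mu_k$ are i.i.d.\ copies of $\mu:=\lim_{N\to\infty}\Delta_N\,N^{-1/(2+\delta)}$ and $\Delta_N$ is the maximum degree of the standard model on $N$ vertices.

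\emph{The crux.} One must then establish the sharp tail estimate $-\log\p(\mu>t)=(c_\ast+o(1))\,t^{(2+\delta)/(1+\delta)}$ as $t\to\infty$. The exponent reflects that the cheapest mechanism for $\mu>t$ is for vertex~$1$ to monopolise essentially all incident edges up to time $T\asymp t^{(2+\delta)/(1+\delta)}$ — a cascade of probability $\exp(-(c_\ast+o(1))T)$ — after which ordinary preferential–attachment growth keeps $\dega{1}{n}\approx t\,n^{1/(2+\delta)}$; optimising over $T$ and the head start it produces determines $c_\ast$. For the lower bound I would make this cascade precise; for the matching upper bound I would union–bound over which vertex $v$ carries the large degree and control $\p(\dega{v}{n}>t\,n^{1/(2+\delta)})$ via the Athreya--Karlin embedding, in which $\dega{v}{n}+m\delta$ has an exponentially–tailed pure–birth martingale limit $\zeta_v$, while the normalisation $n^{1/(2+\delta)}$ carries an inverse power of the total–weight martingale limit that is asymptotically independent of $\zeta_v$ except during the epoch where $v$ is dominant — so the estimate becomes a precise bound on the probability that the ambient weight stays small while $v$'s weight is large. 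This is the step I expect to be genuinely hard: obtaining the shape $(2+\delta)/(1+\delta)$ is already delicate, and producing a bona fide \emph{constant} $c_\ast$ — rather than matching exponents up to a bounded factor — requires a trajectory–level large deviation analysis of $(\dega{1}{t})_t$ coupled with the fluctuations of $L$; everything else in the argument is comparatively soft.

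\emph{Assembly.} Granted this tail, classical extreme–value asymptotics for Weibull–type tails give $\max_{1\le k\le K}\mu_k=(1+o_{\p}(1))\,(c_\ast^{-1}\log K)^{(1+\delta)/(2+\delta)}$ as $K\to\infty$: the deviations about this centring are $O\big((\log K)^{(1+\delta)/(2+\delta)-1}\big)=o\big((\log K)^{(1+\delta)/(2+\delta)}\big)$, and the error term $o(1)$ in the tail estimate is exactly what makes the centring deterministic. Taking $K=(1+o(1))p^{-1}$, so $\log K=(1+o(1))\log p^{-1}$, and substituting into the reduction of the localisation step yields $\tilde X(p)=(1+o_{\p}(1))\,c_\ast^{-(1+\delta)/(2+\delta)}(\log p^{-1})^{(1+\delta)/(2+\delta)}p^{1/(2+\delta)}$, hence $X\to c_\ast^{-(1+\delta)/(2+\delta)}$ in distribution as $p\to0$, which is the conjecture.
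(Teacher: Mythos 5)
First, a point of comparison: the statement you are proving is stated in the paper as an open conjecture, not a theorem --- the authors give only the heuristic that the GPM behaves like $\Theta(p^{-1})$ nearly independent PAMs on $pn$ vertices, and they explicitly record the obstruction (``the degrees of each vertices are not really independent, and it is hard to analyze how the degree grows earlier in the process''). Your proposal is a research program along exactly that heuristic, but it is not a proof: the step you yourself flag as the crux --- the sharp Weibull tail $-\log\p(\mu>t)=(c_\ast+o(1))t^{(2+\delta)/(1+\delta)}$ with a genuine constant $c_\ast$ for the max-degree limit of the standard PAM --- is asserted, not established. Without that constant, the assembly step only localises $\max_k\mu_k$ up to bounded multiplicative factors, which reproduces part (1) of Theorem \ref{main-maximum-degree} (i.e.\ $D_1\le D_2$) and does not give $D_1=D_2$. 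The shape of the exponent is plausible (for $\delta=0$ it matches the Gaussian-type tail of the Mori martingale limit), but extracting a constant requires the trajectory-level large deviation analysis you defer, and that is the entire content of the conjecture.

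Second, the localisation step as written would fail even granted the crux. You cannot partition $S$ into cells of area $(1-o(1))p$ and retain a $(1-o(1))$ fraction whose $r$-enlargements are pairwise disjoint: a cell of area $\asymp p$ has diameter $\asymp r$, so disjointness of the enlargements forces you to discard a constant (dimension-dependent) fraction of $S$, and the maximum over the retained cells then misses a constant fraction of candidate vertices --- fatal when the goal is the exact constant in front of $(\log p^{-1})^{(1+\delta)/(2+\delta)}$. Moreover, vertices arriving in a retained cell have detection balls $B(\cdot,r)$ that overlap neighbouring (including discarded) regions, so the arrivals into a cell do not form a stand-alone PAM; the per-cell processes are coupled both through these overlaps and through the global fluctuations of $L(\cdot)$, and Proposition \ref{prop:2.11} controls $L$ only up to $(pn)^{1-c}$ errors, which is not by itself enough to make the cells ``independent up to $1+o_\p(1)$'' at the level of extreme deviations of order $\log p^{-1}$ --- large degrees are realised precisely on rare events where the local weight is atypical. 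There are also smaller unjustified interchanges (identifying $X$ with $\sup_i$ of the individual limits $\xi^{(p)}_i$, and applying i.i.d.\ extreme-value theory to quantities that are only approximately independent), but the missing tail constant and the non-independence of the spatial blocks are the genuine gaps; they are exactly where the open problem lives.
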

An intuitive explanation for this conjecture is that for each vertex $\V i$, the distribution of $\dega{i}{n}$ is similar to the degree distribution of the $\lfloor pi\rfloor$th vertex in a PAM with $pn$ vertices. So, the maximum degree of the GPM is similar to the maximum degree of $p^{-1}$ independent PAM with $pn$ vertices. So we guess
\begin{align*}
    \e(\#\{1\leq i\leq n\mid (\dega{i}{n})>C\log(p^{-1})^{\frac{1+\delta}{2+\delta}}(np)^{\frac{1}{2+\delta}}\})=p^{f(C)-1+o(1)}.
\end{align*}
And $X$ converges to the solution of $f(x)=1$ when $p\rightarrow 0$. However, the main difficulty here is that the degrees of each vertices are not really independent, and it is hard to analyze how the degree grows earlier in the process.

\section{Connectivity}\label{sectionconnect}

In this section, we will prove Theorem \ref{main-connectivity}. The main idea of the proof comes from \cite{Flaxman01012006}, we will discuss the change in the number of connected components when adding a new vertex. However, we will make a more careful estimate to give a precise result of the connecting time. Moreover, we found that the main difficulty of connectivity comes from isolated vertices, so we will add some weight to the number of isolating vertices. 
\subsection{Connected regime}
 Define $C_n$ as the number of connected components in $GPM_n$. $I_n$ is the number of isolated vertices in $GPM_i$, let $X_n=C_n+0.1I_n$ and $M$ is a large constant.
 \begin{lemma}\label{lem6.2}
  For $n>\frac{M}{p}$,
     \begin{align*}
         \e(X_{n+1})\leq \left(1-\frac{m-0.99}{n}\right)(\e(X_n)-1)+\frac{M}{(pn)^m}+1.
     \end{align*}
 \end{lemma}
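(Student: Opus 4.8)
The plan is a one-step drift estimate. When $V_{n+1}$ is placed and its $m$ edges are added, let $D_{n+1}$ be the number of \emph{distinct} components of $GPM_n$ receiving at least one of these edges, $J_{n+1}$ the number of isolated vertices of $GPM_n$ receiving an edge, and $Z_{n+1}=\mathbf 1\{\text{all }m\text{ edges are self-loops}\}$. Then $C_{n+1}=C_n+1-D_{n+1}$ and $I_{n+1}=I_n-J_{n+1}+Z_{n+1}$, so
\[
  X_{n+1}-X_n\;=\;1-D_{n+1}-0.1\,J_{n+1}+0.1\,Z_{n+1},
\]
and, since we will see $\e(Z_{n+1})=O((pn)^{-m})$, it suffices (taking expectations afterwards) to prove the pointwise drift inequality
\[
  \e\!\left(D_{n+1}+0.1\,J_{n+1}\ \middle|\ \mathcal F_n\right)\;\ge\;1+\frac{m-0.99}{n}\,(X_n-1)-\frac{M}{2(pn)^m}.
\]

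I would first record two facts. \emph{(Structural identity.)} Every component $\mathcal C$ of $GPM_n$ is closed --- out-edges point to earlier vertices, so every edge incident to $\mathcal C$ has both ends in $\mathcal C$ --- hence $\sum_{v\in\mathcal C}\deg v=2m|\mathcal C|$ and the total weight of $\mathcal C$ is exactly $m(2+\delta)|\mathcal C|$; in particular $\sum_{\mathcal C}|\mathcal C|=n$ and an isolated vertex has weight $m(2+\delta)$. \emph{(Good event.)} On the event of Proposition~\ref{prop:2.11}, every denominator encountered while the edges of $V_{n+1}$ are added equals $(2+\delta)mpn\,(1+o(1))$; its complement has probability $o((pn)^{-m})$, and since $X_{n+1}\le X_n+1.1$ always, it contributes only $o((pn)^{-m})$ to $\e(X_{n+1})$. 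On the good event each edge of $V_{n+1}$ is a self-loop with probability $O(1/pn)$, so $\e(Z_{n+1})=O((pn)^{-m})$; and since $D_{n+1}\ge 1$ unless $Z_{n+1}=1$, already $\e(D_{n+1}\mid\mathcal F_n)\ge 1-O((pn)^{-m})$, which produces the constant $1$ in the drift inequality and in particular settles the configurations with $X_n=1$.

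For the surplus I would condition additionally on the position of $V_{n+1}$ and process the $m$ edges in order: an untouched component keeps its original weight, so a component $\mathcal C$ is hit with probability $(1-o(1))\bigl(1-(1-w_r(\mathcal C)/L(n+1))^{m}\bigr)$, where $w_r(\mathcal C):=\sum_{v\in\mathcal C,\,D(v,V_{n+1})\le r}W_v$; summing over all $\mathcal C$ bounds $\e(D_{n+1}\mid\mathcal F_n)$ from below, and summing over singletons bounds $\e(J_{n+1}\mid\mathcal F_n)$. By the structural identity $\e\,w_r(\mathcal C)=p\,m(2+\delta)|\mathcal C|$; feeding this in (together with the control of the fluctuations of $w_r(\mathcal C)$ discussed below) one reaches, writing the non-trivial component sizes as $k_1,\dots,k_{K_n}\ge 2$ and letting $I_n$ be the number of singletons ($\sum_i k_i+I_n=n$, $C_n=K_n+I_n$),
\[
  \e\bigl(D_{n+1}+0.1\,J_{n+1}\mid\mathcal F_n\bigr)\;\ge\;(1-o(1))\Bigl[\textstyle\sum_{i=1}^{K_n}\bigl(1-(1-\tfrac{k_i}{n})^{m}\bigr)+1.1\,I_n\bigl(1-(1-\tfrac1n)^{m}\bigr)\Bigr].
\]
It then remains to check the elementary inequality that the bracket is $\ge 1+\tfrac{m-0.99}{n}(K_n-1+1.1\,I_n)$ up to lower-order terms; this follows from $1-(1-x)^m\ge mx-\binom m2 x^2$ applied to the small components, from letting the singleton term absorb the $1.1\,I_n$ part of the right side (possible since $1.1\,m>1.1\,(m-0.99)$), and from a short case split on the largest component (which is hit with probability $1-o(1)$ when it carries a constant fraction of the vertices). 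This is exactly where the constants $0.1$ and $0.99$ in the statement are needed.

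The main obstacle is the control of $w_r(\mathcal C)$ claimed above. Because edges of the model span distance at most $r$, the components of $GPM_n$ are spatially localized, so an individual $w_r(\mathcal C)$ need not concentrate around its mean (a localized component is essentially all-in or all-out of $B(V_{n+1},r)$); one must therefore argue with the \emph{aggregate}. The crucial inputs are that $\sum_{\mathcal C}w_r(\mathcal C)$ is the total weight in $B(V_{n+1},r)$, hence $(2+\delta)mpn(1+o(1))$ by Proposition~\ref{prop:2.11}; a second-moment estimate $\e\bigl[\sum_{\mathcal C}w_r(\mathcal C)^2\mid\mathcal F_n\bigr]=p\sum_{\mathcal C}\sum_{v,v'\in\mathcal C}W_vW_{v'}f(v,v')$ with $f$ as in Theorem~\ref{main-triangle-count}, in which the naive bound $f\le1$ must be improved using that $f(v,v')$ is supported on $\{D(v,v')\le 2r\}$; and the max-degree bound $\Delta(GPM_n)=O((pn)^{1/(2+\delta)})$ of Theorem~\ref{main-maximum-degree} to discard the high-degree vertices. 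Balancing this aggregate control against the combinatorial inequality, uniformly over the possible component structures, is the heart of the proof.
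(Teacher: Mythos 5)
Your setup coincides with the paper's: the same decomposition of the one-step drift into the number of distinct components hit (with the bound $1-(1-x_i/L(n+1))^m$ for the probability of hitting component $A_i$, where $x_i$ is the weight of $A_i$ inside $B(\V{n+1},r)$), the same extra credit for hit singletons, and the same compensation for the rare events that $L(n+1)$ deviates or that all $m$ edges are self-loops. The gap is in the central step. Your displayed lower bound $\sum_i\bigl(1-(1-k_i/n)^m\bigr)+1.1I_n(1-(1-1/n)^m)$ is obtained by replacing $x_i$ with its mean $(2+\delta)mp\,|A_i|$ inside the map $x\mapsto 1-(1-x/L)^m$; since that map is concave, Jensen's inequality runs in the wrong direction, and the inequality is genuinely false for large spatially localized components. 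Concretely, if $A_i$ has $k\gg pn$ vertices packed near a single detection ball, then $\p(A_i\text{ is hit})$ is at most the area of the $r$-neighborhood of its support, which can be far smaller than $1-(1-k/n)^m\approx mk/n$; this is exactly the ``all-in-or-all-out'' phenomenon you flag, but you flag it only after asserting the bound that it invalidates. Your proposed repair via the aggregate second moment does not close this: the correction term $\sum_i\e(x_i^2)/L^2$ is of order $\sum_i k_i^2/(pn^2)$ even after exploiting that $f$ is supported on $\{D\leq 2r\}$, and for components of size $k\gtrsim pn$ this swamps the first-order term $mk/n$ you are trying to protect. The appeal to Theorem \ref{main-maximum-degree} does not help either, since the obstruction is the spatial clustering of a whole component, not the degree of any single vertex. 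Note also that for small components ($x_i\leq 100m(2+\delta)$, say) no concentration is needed at all --- linearity of expectation suffices --- so the entire difficulty sits in the regime your argument does not control.

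The paper's proof (Lemma \ref{lem6.2a}) resolves this without any concentration of $x_i$: it truncates, replacing $x_i$ by $\min(x_i,100m(2+\delta))$ at the cost of changing $m$ to $m-1.01$ in the coefficient, and then only needs $\e\bigl(\min(x_i,100m(2+\delta))\bigr)\gtrsim mp$ for every component with $|A_i|\leq 0.6n$ --- a bound of the same order for all component sizes, matching the fact that each component owes only $(m-0.99)/n$ of surplus drift regardless of its size. This is established by a deterministic geometric case analysis on the area of the ``dense region'' $\mathbb{D}_i$ (where $A_i$ carries weight at least $100m(2+\delta)$ within radius $100^{-1/d}r$) versus the ``sparse region,'' using the isoperimetric inequality in the intermediate case. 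If you want to salvage your route, you should abandon the mean-substitution display and instead prove the uniform truncated-expectation bound; as written, the step on which the rest of your argument rests is false.
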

\begin{proof}
The main idea of this lemma is that for each connected component, when a new vertex is added near the component, it creates an opportunity for the component to be connected to the new vertex, thereby reducing its weight. By attributing the total weight reduction to individual connected components, we can show that every component except the largest one contributes at least $\frac{m-0.99}{n}$ to the overall weight reduction.

When $\V{n+1}$ is added to the graph, we analyze the change in the number of connected components. Suppose that the connected components of $GPM_n$ are $A_1, A_2, \dots, A_t$, where $t = C_n$ denotes the number of components. For the newly added vertex $\V{n+1}$, we define
     \begin{align*}
         x_i:=\sum\limits_{\V j\in A_i\cap B(
         \V{n+1},r)}\wdb{j}{n}.
     \end{align*}
     Then $\sum\limits_{i=1}^{t}x_i=L(n+1)-m(2+\delta)$. For each component $A_i$,
     \begin{equation*}
         \p(\V{n+1} \text{~connect~to~} A_i)\geq 1-\left(1-\frac{x_i}{L(n+1)}\right)^{m}
     \end{equation*}
    which gives 
     \begin{align*}
         \e(X_{n+1})-X_n\leq 1-\sum\limits_{i=1}^{t}(1+0.1*\mathbf{1}_{\{|A_i|=1\}})\left(1-\left(1-\frac{x_i}{L(n+1)}\right)^{m}\right)+0.1\left(\frac{2+\delta}{L(n+1)}\right)^m.
     \end{align*}

     Let $c$ be the constant in Theorem \ref{thm2.10}. Conditioned on $GPM_n$ and the position of $\V{n+1}$, define
     \begin{align*}
         &a_n=\left(\frac{2(m-1)}{pn}L(n+1)+2\right)\mathbf{1}_{\{L(n+1)\notin[(2+\delta)mpn-(pn)^{1-c},(2+\delta)mpn+(pn)^{1-c}]\}},\\
         &b_n=\frac{200(m-1)}{pn}L(n+1)\mathbf{1}_{\{\#\{1\leq i\leq n\mid D(\V i,\V{n+1})\leq 100^{-\frac{1}{d}}r\}\leq \frac{pn}{200}\}},\\
         &c_n=\sum\limits_{i=1}^{t}-\frac{((1+0.1*\mathbf{1}_{\{|A_i|=1\}})m-1.01)\min(x_i,100m(2+\delta))\mathbf{1}_{\{x_i\leq L(n+1)-200m(2+\delta)\}}}{(2+\delta)mpn+2(pn)^{1-c}},\\
         &d_n=2\left(\frac{2+\delta}{(2+\delta)mpn-(pn)^{1-c}}\right)^{m}.
     \end{align*}
     Here $c_n$ is the main contribution to the reduction of weight, $a_n$ and $b_n$ compensate for some rare events, and $d_n$ is to counteract the increase in weight when the new vertex only connects to itself. Then for $n>\frac{M}{p}$,
     \begin{align*}
         &\e(X_{n+1})-X_n\\
         \leq&-\sum\limits_{i=1}^{t}\left((1+0.1*\mathbf{1}_{\{|A_i|=1\}})\left(1-\left(1-\frac{x_i}{L(n+1)}\right)^{m}\right)-\frac{x_i}{L(n+1)-m(2+\delta)}\right)\\
         &+0.1\left(\frac{2+\delta}{L(n+1)}\right)^m\\
         \leq& -\sum\limits_{i=1}^{t}\left(\frac{((1+0.1*\mathbf{1}_{\{|A_i|=1\}})m-1.01)\min(x_i,100m(2+\delta))\mathbf{1}_{\{x_i\leq L(n+1)-200m(2+\delta)\}}}{L(n+1)}\right)\\
         &\cdot \mathbf{1}_{\{L(n+1)\in[(2+\delta)mpn-(pn)^{1-c},(2+\delta)mpn+(pn)^{1-c}]\}}+1.1\left(\frac{2+\delta}{L(n+1)}\right)^m\\
         \leq& c_n+d_n+(-c_n+2)\mathbf{1}_{\{L(n+1)\notin[(2+\delta)mpn-(pn)^{1-c},(2+\delta)mpn+(pn)^{1-c}]\}}\\
         \leq& a_n+c_n+d_n.
     \end{align*}
     Define
     \begin{align*}
         &a_n^i=\frac{2(m-1)}{pn}x_i\mathbf{1}_{\{L(n+1)\notin[(2+\delta)mpn-(pn)^{1-c},(2+\delta)mpn+(pn)^{1-c}]\}},\\
         &b_n^i=\frac{200(m-1)}{pn}x_i\mathbf{1}_{\{\#\{1\leq i\leq n\mid D(\V i,\V{n+1})\leq 100^{-\frac{1}{d}}r\}\leq \frac{pn}{200}\}},\\
         &c_n^i=-\frac{((1+0.1*\mathbf{1}_{\{|A_i|=1\}})m-1.01)\min(x_i,100m(2+\delta))\mathbf{1}_{\{x_i\leq L(n+1)-200m(2+\delta)\}}}{(2+\delta)mpn+2(pn)^{1-c}}.\\
     \end{align*}
     Here, $a_n^i$, $b_n^i$ and $c_n^i$ divide $a_n$, $b_n$ and $c_n$ into each component. Then 
     \begin{align*}
         \sum\limits_{i=1}^{t}a_n^i\leq a_n,\quad \sum\limits_{i=1}^{t}b_n^i\leq b_n,\quad \sum\limits_{i=1}^{t}c_n^i= c_n.
     \end{align*}
    We use Lemma \ref{lem6.2a}, proved later, to complete the proof. Since $\e(a_n),\e(b_n),\e(d_n)\leq \frac{2}{(pn)^m}$, by Lemma \ref{lem6.2a}, we have 
         \begin{align*}
             \e(X_{n+1}-X_n)&\leq \e(a_n+c_n+d_n)\leq \sum_{i=1}^{t}\e(c_n^i-a_n^i-b_n^i)+2a_n+b_n+d_n\\
             &\leq \e\left(-(X_n-1)\left(\frac{m-0.99}{n}\right)+2a_n+b_n+d_n\right).
         \end{align*}
         Then 
         \begin{align*}
             \e(X_{n+1}-1)\leq \left(1-\frac{m-0.99}{n}\right)(\e(X_n)-1)+\frac{M}{(pn)^m},
         \end{align*}
         which completes the proof of Lemma \ref{lem6.2}.
\end{proof}
         
         \begin{lemma}\label{lem6.2a}         
     For any component $A_i$ that satisfies $1\leq |A_i|\leq 0.6n$, conditioned on $GPM_n$, we have
         \begin{equation}
             \e(c_n^i-a_n^i-b_n^i)\leq -\frac{\left(1+0.1*\mathbf{1}_{\{|A_i|=1\}}\right)(m-0.99)}{n}.\label{eq:lem6.2}
         \end{equation}
     \end{lemma}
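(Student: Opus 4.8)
The plan is to argue conditionally on $GPM_n$, so that the components $A_1,\dots,A_t$, their vertex positions, and their weights are all fixed and the only remaining randomness is the uniform position of $\V{n+1}$ on $S$; note that $a_n^i$, $b_n^i$ and $c_n^i$ are deterministic functions of that position, so \eqref{eq:lem6.2} is an inequality between two numbers once $GPM_n$ is given. The structural input I need is that the total weight $w_i:=\sum_{\V j\in A_i}\wdb{j}{n}$ of a component of size $\ell:=|A_i|$ equals $m(2+\delta)\ell$: each of the $m\ell$ edges emanating from the vertices of $A_i$ has both endpoints in $A_i$ (an edge leaving $A_i$ would merge two components), hence contributes $2$ to $\sum_{\V j\in A_i}\dega{j}{n}$, so $\sum_{\V j\in A_i}\dega{j}{n}=2m\ell$ and $w_i=m(2+\delta)\ell$. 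Since each $\V j\in A_i$ lies in $B(\V{n+1},r)$ with probability exactly $p$, this gives $\e(x_i\mid GPM_n)=pw_i=pm(2+\delta)\ell$.

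Next I would extract the main term of $-\e(c_n^i)$. Write $E:=\{x_i\le L(n+1)-200m(2+\delta)\}$, $D:=(2+\delta)mpn+2(pn)^{1-c}$, and $K_i:=1+0.1\,\mathbf{1}_{\{\ell=1\}}$, so that $c_n^i=-\frac{K_im-1.01}{D}\min(x_i,100m(2+\delta))\mathbf{1}_E$ (here $K_im-1.01\ge 0.99>0$ since $m\ge 2$, which is the relevant regime for Theorem \ref{main-connectivity}). From the pointwise identity $\min(x_i,100m(2+\delta))\mathbf{1}_E=x_i-[x_i-100m(2+\delta)]_+\mathbf{1}_E-x_i\mathbf{1}_{E^c}$ and the previous paragraph,
\[
-\e(c_n^i)\ \ge\ \frac{K_im-1.01}{D}\Bigl(pw_i-\e\bigl([x_i-100m(2+\delta)]_+\bigr)-\e\bigl(x_i\mathbf{1}_{E^c}\bigr)\Bigr).
\]
The leading piece $\frac{(K_im-1.01)pw_i}{D}$ equals $\frac{(K_im-1.01)\ell}{n}\cdot\frac{(2+\delta)mpn}{D}=\frac{(K_im-1.01)\ell}{n}\bigl(1-O((pn)^{-c})\bigr)$, and an elementary check using $m\ge 2$ gives $(K_im-1.01)\ell-K_i(m-0.99)\ge 0.079$ for every integer $\ell\ge 1$ (equality at $\ell=1$, and margin $\ge m-1.03\ge 0.97$ for $\ell\ge 2$). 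Thus the leading piece already exceeds the target $\frac{K_i(m-0.99)}{n}$ by a gap of order $1/n$, the $O((pn)^{-c})$ multiplicative loss being $o(1/n)$ once $n>M/p$ for a suitable $M=M(m,\delta)$.

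It then remains to cover the two error terms $\e\bigl([x_i-100m(2+\delta)]_+\bigr)$ and $\e(x_i\mathbf{1}_{E^c})$ using this surplus together with $\e(a_n^i)+\e(b_n^i)$. This is precisely where the hypothesis $\ell\le 0.6n$ is used: $A_i$ then misses at least $0.4n$ vertices, so each of these error-producing events is a large deviation of a Binomial$(\cdot,p)$ count localized around $\V{n+1}$. Concretely, on $E^c$ one has $\sum_{\V j\notin A_i,\,D(\V j,\V{n+1})\le r}\wdb{j}{n}<199m(2+\delta)$, i.e. an atypically small $L(n+1)$; for $\ell$ bounded this already forces $L(n+1)\ll(2+\delta)mpn-(pn)^{1-c}$ and hence fires the indicator defining $a_n^i$, and since $\frac{2(m-1)}{pn}\ge\frac{K_im-1.01}{D}$ for $m\ge 2$, the term $\e(a_n^i)$ dominates $\frac{K_im-1.01}{D}\e(x_i\mathbf{1}_{E^c})$ (while for $\ell\le 2$ no truncation term arises at all, since then $x_i\le w_i<100m(2+\delta)$). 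For larger $\ell$ the two error terms are bounded directly by Chernoff estimates on the number of $A_i$-vertices, resp. non-$A_i$-vertices, inside $B(\V{n+1},r)$. The compensator $b_n^i$, which activates when $B(\V{n+1},100^{-1/d}r)$ contains at most $pn/200$ vertices, absorbs the residual contributions coming from configurations in which $\V{n+1}$ lands in an unusually sparse patch so that $x_i/L(n+1)$ is not small, in particular the truncation error when it is present. Combining the three bounds gives $-\e(c_n^i)+\e(a_n^i)+\e(b_n^i)\ge\frac{K_i(m-0.99)}{n}$, which is \eqref{eq:lem6.2}.

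The main obstacle I anticipate is exactly this last bookkeeping step: checking — uniformly over $p\in(0,1)$, over all $n>M/p$, and over all component sizes $1\le\ell\le 0.6n$ — that the two tailored compensators $a_n^i$ and $b_n^i$, whose coefficients $2(m-1)/pn$ and $200(m-1)/pn$ and whose radius and threshold choices are engineered for this purpose, genuinely dominate the truncation and $E^c$ error terms while leaving the surplus intact. The tight regime is $\ell=1$, where the surplus is only $0.079/n$ and one must be certain that every error term, and every $O((pn)^{-c})$ loss, is strictly smaller once $M$ (depending only on $m,\delta$) is fixed large.
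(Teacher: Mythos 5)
There is a genuine gap, and it sits exactly where you flag your ``main obstacle'': the treatment of the truncation term for large components. Your plan writes $\min(x_i,100m(2+\delta))\mathbf{1}_E$ as $x_i$ minus two ``error terms'' and asserts that for larger $\ell=|A_i|$ these are controlled ``by Chernoff estimates on the number of $A_i$-vertices inside $B(\V{n+1},r)$.'' But conditioned on $GPM_n$ the positions of the vertices of $A_i$ are \emph{fixed and arbitrary} --- and in a geometric model a connected component is typically spatially clustered --- so there is no independence to run a Chernoff bound on, and $\{x_i>100m(2+\delta)\}$ need not be a rare event at all. Concretely, if $A_i$ has $\ell=0.6n$ vertices all packed in a ball of radius $\ll r$, then $x_i$ equals either $0$ or roughly $w_i=(2+\delta)m\ell$, the truncation term $\e\bigl((x_i-100m(2+\delta))^+\bigr)$ is of the same order as the leading term $\e(x_i)=(2+\delta)mp\ell$, and your claimed surplus $\frac{(m-1.01)\ell}{n}$ evaporates: what survives is only $\e(\min(x_i,100m(2+\delta)))\approx 100m(2+\delta)\p(x_i>0)$, with $\p(x_i>0)$ possibly as small as $\Theta(p)$. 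The crude fallback $\e(\min(x_i,T))\geq m(1+\delta)\p(x_i>0)$ is numerically insufficient (it gives $\frac{(m-1.01)(1+\delta)}{(2+\delta)n}$, which is below the target $\frac{m-0.99}{n}$ for small $\delta$), so one genuinely needs to know that \emph{when} the component is clustered, landing near it yields the full capped value $100m(2+\delta)$ --- a factor $100$ to spare --- whereas when it is spread out, $\p(x_i>0)$ is large.

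This dichotomy is the missing idea, and it is what the paper's proof supplies: it partitions $S$ into a dense region $\mathbb{D}_i$ (where the $A_i$-weight within radius $100^{-1/d}r$ is at least $100m(2+\delta)$) and a sparse region $\mathbb{S}_i$, and argues by cases on $A(\mathbb{D}_i)$ --- $A(\mathbb{D}_i)>0.9$ (use Markov to also secure the indicator $E$), $\frac{p}{200}\leq A(\mathbb{D}_i)\leq 0.9$ (use an annulus $\mathbb{D}_i'$ and the isoperimetric inequality so that $b_n^i$ covers the failure of $E$), and $A(\mathbb{D}_i)\leq\frac{p}{200}$ (where for $\ell\geq 0.9pn$ one shows $A(\mathbb{S}_i)\gtrsim p^2n$, and for $\ell\leq 0.9pn$ the truncation never binds in expectation). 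Your computation for $\ell=1$ and small $\ell$ (weights $w_i=(2+\delta)m\ell$, surplus $0.079$ at $\ell=1$, $E^c$ forcing $a_n^i$ to fire) does match the paper's corresponding subcases, but without the dense/sparse decomposition the argument does not close for large, spatially concentrated components.
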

     \begin{proof}
         For a component $A_i$, we define the dense region $\mathbb{D}_i$ and the sparse region $\mathbb{S}_i$ as follows:
         \begin{align*}
             &\mathbb{D}_i=\left\{x\in S\mid \sum\limits_{j\in A_i,D(x,\V j)\leq 100^{-\frac{1}{d}}r}\wdb{j}{n} \geq 100m(2+\delta)\right\},\\
             &\mathbb{S}_i=\left\{x\in S\mid 0<\sum\limits_{j\in A_i,D(x,\V j)\leq 100^{-\frac{1}{d}}r}\wdb{j}{n} < 100m(2+\delta)\right\}.\\
         \end{align*}
         Then when $\V{n+1}\in \mathbb{D}_i$, we have $x_i\geq 100m(2+\delta)$, when $\V{n+1}\in \mathbb{S}_i$, we have $x_i\geq m(1+\delta)$.
         We classify by the area of $\mathbb{D}_i$.
         \begin{enumerate}
         
         \item $A(\mathbb{D}_i) > 0.9$.
         Since $\e(x_i)= (2+\delta)mp|A_i|\leq 0.6(2+\delta)mpn$. Then $\p(\V{n+1}\in \mathbb{D}_i, x_i\leq 0.9(2+\delta)mpn)\geq 0.2$. Notice that 
         \begin{align*}
             \e(c_{n}^{i}-a_n^i)&\leq -\e\left(\frac{(m-1.01)\min(x_i,100m(2+\delta))\mathbf{1}_{\{x_i\leq 0.9m(2+\delta)pn\}}}{(2+\delta)mpn+2(pn)^{1-c}}\right)\\
             &\leq -0.2\frac{(m-1.01)100m(2+\delta)}{(2+\delta)mpn+2(pn)^{1-c}}\leq -\frac{m-0.99}{n}
         \end{align*}
         which satisfy \eqref{eq:lem6.2}.
         
         \item $\frac{p}{200}\leq A(\mathbb{D}_i)\leq 0.9$.
         Define $\mathbb{D}'_i=\{x\in S\setminus \mathbb{D}_i\mid D(x,\mathbb{D}_i)\leq (1-100^{-\frac{1}{d}})r\}$, then when $\V{n+1}\in\mathbb{D}'_i$, we still have $x_i\geq 100m(2+\delta)$. Notice that
         \begin{align*}
             L(n+1)-x_i\geq \#\{1\leq j\leq n\mid D(\V j,\V{n+1})\leq 100^{-\frac{1}d}r\}-100m(2+\delta)
         \end{align*}
         and by isoperimetric inequality, $A(D'_i)\geq (1-100^{-\frac{1}{d}}+200^{-\frac{1}{d}})^dp\geq \frac{1}{2}p$. As a result, when $b_n^i\leq \frac{100(m-1.01)}{pn}$, then $b_n^i=0$, and $\#\{1\leq i\leq n\mid D(\V i,\V{n+1})\leq 100^{-\frac{1}{d}}r\}\geq \frac{pn}{200}$. Since $pn>M$ is sufficiently large, we have 
         \begin{align*}
             L(n+1)-x_i\geq \#\{1\leq j\leq n\mid D(\V j,\V{n+1})\leq 100^{-\frac{1}d}r\}-100m(2+\delta)\geq 200m(2+\delta).
         \end{align*}
         In conclusion, we have
         \begin{align*}
             \e(c_n^i-b_n^i)\leq -\frac{10(m-1.01)}{n}\leq \frac{m-0.99}{n}.
         \end{align*}

         \item$A(\mathbb{D}_i)\leq \frac{p}{200}$.
        When $|A_i|\geq 0.9pn$, for every $x\in A_i$, we have 
        \begin{align*}
            A\left(\mathbb{S}_i\cap B(x,100^{-\frac{1}{d}}r)\right)=A\left(B(x,100^{-\frac{1}{d}}r)\right)-A\left(\mathbb{D}_i\cap B(x,100^{-\frac{1}{d}}r)\right)\geq\frac{p}{200}.
        \end{align*} 
        Then 
         \begin{align*}
             A(\mathbb{S}_i)&\geq \e_{x \sim Unif(S)}\left(\frac{1}{100m(2+\delta)}\sum\limits_{j\in A_i,D(x,\V j)\leq 100^{-\frac{1}{d}}r}\wdb{j}{n}\mathbf{1}_{\{x\in \mathbb{S}_i\}} \right)\\
             &\geq \frac{p}{200}\sum\limits_{j\in A_i,D(x,\V j)\leq 100^{-\frac{1}{d}}r}\frac{\wdb{j}{n}}{100m(2+\delta)}\geq \frac{p^2n}{30000}>\frac{pM}{30000}.
         \end{align*}
         Then, when $M$ is sufficiently large, we have
         \begin{align*}
             \e(c_n^i-b_n^i)\leq -\e\left(\frac{(m-1.01)m(1+\delta)}{(2+\delta)mpn}(1-10(pn)^{-c})\mathbf{1}_{\{x\in \mathbb{S}_i\}}\right)\leq -\frac{m-0.99}{n}.
         \end{align*} 

         When $2\leq |A_i|\leq 0.9pn$, then
         \begin{align*}
             &\e(c_n^i-a_n^i)\leq -\e\left(\frac{(m-1.01)\min(x_i,100m(2+\delta))}{(2+\delta)mpn+(pn)^{1-c}}\right)\\
             \leq& -\frac{2(m-1.01)}{n}(1-10(pn)^{-c})\leq -\frac{m-0.99}{n}.
         \end{align*}

         When $|A_i|=1$, then 
         \begin{align*}
             \e(c_n^i-a_n^i)\leq -\e\left(\frac{((1.1m-1.01)x_i}{(2+\delta)mpn+(pn)^{1-c}}\right)\leq -\frac{1.1(m-0.99)}{n}.
         \end{align*}
         \end{enumerate}
    \end{proof}

         By Lemma \ref{lem6.2}, we can give an upper bound for the number of connected components, now we only need to prove that $\e(X_n)=1+o(1)$ when $np^{\frac{m}{m-1}}=\omega(1)$.\\
         Since $\e(X_{\lceil Mp^{-1}\rceil})\leq \lceil Mp^{-1}\rceil$, we have 
         \begin{align*}
             \e(X_{n})
             \leq& 1+\exp\left(\sum\limits_{i=\lceil Mp^{-1}\rceil}^{n-1}-\frac{m-0.99}{i}\right)\lceil Mp^{-1}\rceil+\sum\limits_{i=\lceil Mp^{-1}\rceil}^{n-1}\frac{M}{(pi)^m}\exp\left(\sum\limits_{j=i+1}^{n-1}-\frac{m-0.99}{i}\right)\\
             \leq& 1+\exp(-(m-0.99)\log(npM^{-1})+O(1))\lceil Mp^{-1}\rceil\\
             +&\sum\limits_{i=\lceil Mp^{-1}\rceil}^{n-1}\frac{M}{(pi)^m}\exp(-(m-0.99)\log(i^{-1}n)+O(1))\\
             \leq& 1+O(1)(n^{-(m-0.99)}p^{-(m+0.01)}M^{(m-0.99)}+\sum\limits_{i=\lceil Mp^{-1}\rceil}^{n-1}Mp^{-m}i^{-0.99}n^{-(m-0.99)})\\
             \leq&1+ O(1)(np^{\frac{m}{m-1}})^{1-m}M.
         \end{align*}
         Then $\p(GPM_n\text{ is connected})\geq 1- O(1)(np^{\frac{m}{m-1}})^{1-m}M \rightarrow 1$ when $np^{\frac{m}{m-1}}\rightarrow \infty$.

    \subsection{Disconnected regime}\label{disconnectss}
    We prove that with high probability $GPM_n$ has isolated vertices when $np^{\frac{m}{m-1}}$ is sufficiently small.
    Take $M=\lceil an\rceil$ where $a$ is a constant. For a small constant $c>0$ and $n_1<n_2$, define the event 
    \begin{align*}
        A_{n_1,n_2}=\{\exists \ n_1\leq i\leq n_2, L(i)\notin [(2+\delta)mpi-(pi)^c,(2+\delta)mpi+(pi)^c-m]\}.
    \end{align*}
    Then by Theorem \ref{thm2.10}, $\p(A_{M,n})=o(\exp(-(pM)^c))$. 
    Define 
    \begin{align*}
        &\tau:=\inf\{i\geq M,  i \text{~is~isolated~in~}GPM_i\},\\
        &X:=A_{M,n}^c\cap \{\tau>n\},\\
        &X_i:=A_{M,n}^c\cap \{\tau=i, \V i \text{~is~not~isolated~in~}GPM_n.\}.
    \end{align*}
    Then 
    \begin{equation}    
        \p(GPM_n \text{~is~connected})\leq \p(A)+\p(X)+\sum\limits_{i=M}^n\p(X_i).\label{eq:disconnect-1}
    \end{equation}

    Since we have
    \begin{align*}
        \p(A_{M,i}^c \cap \{\V{i} \text{ is isolate in }GPM_{i}\}\mid \mathcal{F}_{i-1})\geq \left(\frac{1+\delta}{(2+\delta)mpi+(pi)^c}\right)^m.
    \end{align*} 
    Which implies
    \begin{equation*}
        \p(X)\leq \prod\limits_{i=M}^{n}\left(1-\left(\frac{1+\delta}{(2+\delta)mpi+(pi)^c}\right)^m\right)\leq \exp(-(3m)^{-m}(1-a)n^{1-m}p^{-m}).
    \end{equation*}
    And for $j>i$, we have
    \begin{align*}
        \p(A_{M,j}^c\cap \{(\V j,\V{i})\in GPM_{j}|\mathcal{F}_{j-1}\})\leq \frac{mp\wdb{j}{i}}{(2+\delta)mpj-(pj)^c}.
    \end{align*} Notice that $\wdb{i}{j}=m(2+\delta)$ for the isolated vertex $\V i$. We have
    \begin{align*}
        \p(X_i)\leq \p(\tau=i)\left(1-\prod_{j=i+1}^{n}\left(1-\frac{(2+\delta)m^2p}{(2+\delta)mpj-(pj)^c}\right)\right) \leq \p(\tau=i)2m\log(i^{-1}n),
    \end{align*}
    and so 
    \begin{align*}
        \sum\limits_{i=M}^{n}\p(X_i)\leq -2m\log(a)\sum\limits_{i=M}^{n}\p(\tau=i)\leq 2m\frac{1-a}{a}(1-\p(X)).
    \end{align*}
    Plugging these bounds into \eqref{eq:disconnect-1}, we have
    \begin{align}
        &\p(GPM_n\text{ is connected})\leq o(\exp(-(apn)^c))+1-(1-\p(X))\left(1-2m\frac{1-a}{a}\right)\label{eq:disconnect-result-1}\\
        \leq&  o(\exp(-(apn)^c))+\exp(-(3m)^{-m}(1-a)n^{1-m}p^{-m})+2m\frac{1-a}{a}.\notag
    \end{align}    
   Then for any constant $\varepsilon>0$, there exist constants $\varepsilon',C>0$ such that when $pn>C$, $p^{\frac{m}{m-1}}n<\varepsilon'$, taking $a$ close to $1$, we have
   \begin{equation}   
       \p(GPM_n\text{ is connected})\leq \varepsilon.\label{eq:6.4}
   \end{equation}
   When $pn\leq C$, it is known that when $n>\text{e}^{Mpn}$ for some large constant $M$, with high probability the random geometric graph $RGG_{n}$ itself has isolated vertices and the network $GPM_n$ cannot be connected. If $n\leq M\text{e}^C$, then $np^{\frac{m}{m-1}}\geq (pn)^{\frac{m}{m-1}}\text{e}^{-\frac{Mpn}{m-1}}\geq \text{e}^{-CM}$. Combining with \eqref{eq:6.4}, we have
   \begin{equation*}
       \lim_{x\rightarrow 0}f_2(x)=0.
   \end{equation*}

   In addition, consider the first vertex $V_1$, if all other vertices are not in its detection area, then $\V 1$ must be isolated in $GPM_n$, which gives
   \begin{equation}
       \p(GPM_n\text{ is connected})\leq 1-\exp(-pn).\label{eq:disconnect-result-2}
   \end{equation}
   Combining \eqref{eq:disconnect-result-1} and \eqref{eq:disconnect-result-2} gives $f_2(x)<1$.

\section{Diameter}\label{sectiondia}
    In this section, we will prove Theorem \ref{main-diameter}. The proof is similar to the proof of the diameter in the PAM, which was established in \cite{Dommers2010}.
    \begin{lemma}\label{diamup}
        There is a constant $C_2$ such that almost surely, 
        \begin{align*}
            \limsup\limits_{n\rightarrow \infty}\frac{diam(GPM_n)}{\log(n)}\leq C_2.
        \end{align*}
    \end{lemma}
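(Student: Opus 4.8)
The plan is to route every pair of vertices through a bounded ``scaffold'' — a prefix $\{V_1,\dots,V_{T^{*}}\}$ of the vertex set, where $T^{*}$ is a random but almost surely finite time. I would choose $T^{*}$ large enough that, almost surely, the following hold: (i) $GPM_{T^{*}}$ is connected (equivalently, the subgraph of $GPM_n$ induced on $\{V_1,\dots,V_{T^{*}}\}$, which is the same for all $n\ge T^{*}$, is connected); (ii) every point of $S$ lies within distance $r$ of some $V_j$, $j\le T^{*}$; (iii) $L(j)\in[(2+\delta)mpj-(pj)^{1-c},(2+\delta)mpj+(pj)^{1-c}]$ for all $j\ge T^{*}$; and (iv) for every $j\ge T^{*}$ at least one edge of $V_j$ joins an older vertex, and at least $\tfrac13 pj$ of the $RGG_j$-neighbours of $V_j$ have index $\le j/2$. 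Each is a.s.\ true for all large $T$: (i) because $\p(GPM_T\text{ connected})\to1$ (Theorem \ref{main-connectivity}) forces the set of connected times to be a.s.\ infinite; (ii) by a covering estimate for uniform points on $S$ (using $p$ constant); (iii) by Proposition \ref{prop:2.11} and Borel--Cantelli; (iv) by Borel--Cantelli, since $\p(\text{all }m\text{ edges of }V_j\text{ are loops})=O(j^{-m})$ (which uses $m\ge2$) and the count of near neighbours is $\mathrm{Bin}(j-1,p)$-distributed. I would then condition on $\mathcal{F}_{T^{*}}$ and on the scaffold event.

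The heart of the argument is a descent estimate. By the first-moment computation underlying Theorem \ref{thm3.1}, I would show there is a constant $q>0$, depending only on $m,\delta$, such that for every $j>T^{*}$,
\begin{align*}
    \p\big(v_{j,1}\in\{V_1,\dots,V_{\lfloor j/2\rfloor}\}\mid\mathcal{G}_{j-1,j}\big)\ \ge\ q .
\end{align*}
Indeed, on the scaffold $L(j)=(1+o(1))(2+\delta)mpj$ and $V_j$ has at least $\tfrac13 pj$ in-range older vertices of index $\le j/2$, each of weight at least $m(1+\delta)$, so the probability that $e_{j,1}$ attaches to one of them is at least $(1+o(1))\tfrac{(1+\delta)}{2(2+\delta)}$; replacing the crude weight bound by the typical value $\Theta\big((j/a)^{1/(2+\delta)}\big)$ sharpens $q$ to $2^{-(1+\delta)/(2+\delta)}$. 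For a starting vertex $V_i$, $i\le n$, I define the chain $i_0=i$, and $i_{k+1}=v_{i_k,1}$ if $v_{i_k,1}<i_k$, otherwise $i_{k+1}$ the least index of an older neighbour of $V_{i_k}$ (which exists while $i_k>T^{*}$, by (iv)); this chain strictly decreases and, by the estimate above, each step halves the index with conditional probability $\ge q$. Stochastic domination by a $\mathrm{Bin}(\cdot,q)$ count and a Chernoff bound then give that the chain enters $\{1,\dots,T^{*}\}$ within $K:=\lceil (C/q)\log n\rceil$ steps with probability $\ge1-n^{-10}$, for a suitable absolute $C$; a union bound over $i\le n$ shows that, with probability $\ge1-n^{-9}$, every vertex of $GPM_n$ is within graph distance $K$ of the scaffold. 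With (i)--(ii) this yields $diam(GPM_n)\le 2K+T^{*}-1\le C_2\log n$ for $n$ large, with $C_2=2C/q+1$.

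Finally I would upgrade this to the almost-sure bound by a second Borel--Cantelli step: conditionally on the fixed scaffold the event $\{diam(GPM_n)>C_2\log n\}$ has probability at most $n^{-9}$ (all other sources of error having been absorbed into the definition of $T^{*}$), which is summable in $n$, so almost surely $diam(GPM_n)\le C_2\log n$ for all large $n$; as this holds for almost every realisation of the scaffold, it holds unconditionally. I expect the main obstacle to be the descent estimate and its iteration: one must control the attachment probabilities uniformly along a data-dependent chain of vertices, which is precisely what the concentration of $L(n)$ (Section \ref{sectionln}) and the subgraph probabilities (Section \ref{sectionedgep}) are designed to supply. A more technical point is passing from ``with high probability'' to ``almost surely'' despite the weak concentration at small times, which is what forces the random scaffold time $T^{*}$; the degenerate case $m=1$ is not covered by part (iv) and would instead be treated by routing through the largest tree of $GPM_{T^{*}}$ rather than a connected prefix.
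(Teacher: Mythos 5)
Your overall architecture --- route every vertex to an almost surely finite connected ``scaffold'' prefix in $O(\log n)$ steps, then cross the scaffold --- matches the paper's, which takes $M=Cp^{-m/(m-1)}$, relies on Theorem \ref{main-connectivity} for connectivity of the prefix, and on Theorem \ref{thm2.10} for concentration of $L(n)$. The difference lies in how the graph distance to the prefix is bounded, and that is where your argument has a genuine gap. The paper avoids any path-by-path analysis by introducing the weighted exponential moment $X_n=\mathbf{1}_{A_n^c}\sum_i \wdb{i}{n}\exp(\theta\, p_n(\V i,\gV M))$, showing $\e(X_{n+1}-X_n\mid\mathcal F_n)\le \frac{1+\mathrm{e}^{\theta}(1+\delta+\varepsilon)}{(2+\delta-\varepsilon)n}X_n$, and applying Markov's inequality; this aggregates over all vertices and all paths simultaneously and never needs to control an individual backward chain.

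Your descent chain is traced backward in time, and the claimed stochastic domination of the number of halving steps by $\mathrm{Bin}(K,q)$ is not justified. The event that the chain arrives at $i_k$ is determined by the edges $e_{i_0,1},\dots,e_{i_{k-1},1}$, all placed at times strictly later than $i_k$; conditioning on it is conditioning on the future of the process relative to time $i_k$, and this biases the law of $e_{i_k,1}$ (for instance, $\{v_{i_{k-1},1}=V_{i_k}\}$ tilts toward realisations in which $V_{i_k}$ carries large weight at time $i_{k-1}$, which is positively correlated with $e_{i_k,1}$ being a self-loop or pointing to a recent heavy vertex, i.e.\ against a halving step). So the one-step bound $\p(v_{j,1}\le j/2\mid\mathcal G_{j-1,j})\ge q$, which is correct on your scaffold event, cannot simply be multiplied along the data-dependent chain. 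Nor can the gap be repaired by Theorem \ref{thm3.1} plus a union bound over descending paths, as you suggest: each step of a fixed path contributes a factor $\Theta(1)$ after summing over its possible endpoints (the attachment probabilities sum to one), so the total over all length-$K$ descending paths with few halvings is of order $C^{K}=n^{\Theta(1)}$, which is useless for an upper bound --- this first-moment path count is precisely what the paper uses for the \emph{lower} bound in Lemma \ref{diamdown}, where $o(n)$ suffices. To close the argument you would need either the paper's exponential-moment supermartingale or a genuine decoupling of the backward chain (e.g.\ an urn/exchangeability representation), neither of which is supplied by Sections \ref{sectionln}--\ref{sectionedgep}. The remaining ingredients of your proposal (the random scaffold time, the covering of $S$, the treatment of self-loops for $m\ge2$, and the final Borel--Cantelli step) are sound.
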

    \begin{proof}

    Take $M=Cp^{-\frac{m}{m-1}}$, suppose $c,\varepsilon$ are small constants, define the events
    \begin{align*}
        &A_M=\{GPM_M\text{ is not connected}\},\\
        &A_n=\{ \V n\text{ is isolated}\}\cup \{L(n)\notin [(2+\delta-\varepsilon)mpn,(2+\delta+\varepsilon)mpn]\}\cup A_{n-1}.
    \end{align*}
    Then by Theorems \ref{main-connectivity} and \ref{thm2.10}, $\lim\limits_{n\rightarrow \infty}\p(A_n)=o(1)$ when $C\rightarrow \infty$. Define 
    \begin{align*}
        X_n=\mathbf{1}_{A_n^c}\sum\limits_{i=1}^{n}\wdb{i}{n}\exp(\theta p_n(\V i,\gV{M})).
    \end{align*}
   
    If $p_{n+1}(\V{n+1},\gV{M})\geq a+1$, suppose that the first neighbor of $\V{n+1}$ is $\V i$, then $p_{n}(\V{i},\gV{M})\geq a$. As a result, on $A_{n+1}^c$, 
    \begin{align*}
        \p(p_{n+1}(\V{n+1},\gV{M})\geq a+1\mid \mathcal{F}_n)\leq\sum\limits_{\substack{1\leq i \leq n, \\p_n(i,\gV{M})\geq a}}\frac{\wdb{i}{n}}{(2+\delta-\varepsilon)mn}.
    \end{align*}
     Since
    \begin{align*}
        &\e((\wdb{i}{n+1}-\wdb{i}{n})\mathbf{1}_{A_{n+1}^c}\mid \mathcal{F}_n)\leq \frac{\wdb{i}{n}\mathbf{1}_{A_n^c}}{(2+\delta-\varepsilon)n},\\
        &p_{n+1}(x,y)\leq p_n(x,y) \quad \forall \ x,y\in \gV{n}.
    \end{align*}
    Then 
    \begin{align*}
        &\e(X_{n+1}-X_n\mid \mathcal{F}_n)\\
        \leq & \e(\sum\limits_{i=1}^n\mathbf{1}_{A_{n+1}^c}(\wdb{i}{n+1}-\wdb{i}{n})\exp(\theta p(\V i,\gV{M}))+\mathbf{1}_{A_{n+1}^c}W_{n+1}(n+1)\exp(\theta p_{n+1}(\V{n+1},\gV{M})))\\
        \leq & \sum\limits_{i=1}^n\frac{\wdb{i}{n}\exp(\theta p(\V i,\gV{M}))}{(2+\delta-\varepsilon)n}+\left(m(1+\delta)+\frac{m(1+\delta)}{(2+\delta-\varepsilon)mn}\right)\sum\limits_{a=0}^\infty \sum\limits_{\substack{1\leq i\leq n,\\ p_n(i,\gV{M})=a}}\frac{\wdb{i}{n}\exp(\theta(a+1))}{(2+\delta-\varepsilon)mn}\\
        \leq& \frac{1+\text{e}^{\theta}(1+\delta+\varepsilon)}{(2+\delta-\varepsilon)n}X_n.
    \end{align*}
    Then we have
    \begin{align*}
        \e(X_n)\leq n^{\frac{1+\text{e}^{\theta}(1+\delta+\varepsilon)}{(2+\delta-\varepsilon)}}.
    \end{align*}
    As a result, on $A_n^c$, 
    \begin{align*}
        \p(\exists\  1\leq i\leq n, p(\V i,\gV{M})\geq a)\leq \text{e}^{-a\theta}\e(X_n)\leq \exp\left(\frac{1+\text{e}^{\theta}(1+\delta+\varepsilon)}{(2+\delta-\varepsilon)}\log(n)-a\theta\right).
    \end{align*}
    Take $\theta=1, a>\frac{1+e(1+\delta+\varepsilon)}{(2+\delta-\varepsilon)}\log(n)$ to complete the proof of the lemma.
    \end{proof}
    \begin{lemma}\label{diamdown}
        There is a constant $C_1$ such that almost surely, 
        \begin{align*}
            \liminf\limits_{n\rightarrow \infty}\frac{diam(GPM_n)}{\log(n)}\geq C_1.
        \end{align*}
    \end{lemma}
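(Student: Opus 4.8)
The plan is to exhibit a small constant $C_1>0$ for which, almost surely and for every large $n$, the graph ball of radius $C_1\log n$ around $\V 1$ in $GPM_n$ fails to contain all of $\gV n$; this forces a vertex at graph distance $>C_1\log n$ from $\V 1$, hence $diam(GPM_n)>C_1\log n$. (Since $p$ is a constant, $GPM_n$ is almost surely connected for all large $n$: apply Borel--Cantelli along $n_k=2^k$ to the bound $\p(GPM_n\text{ disconnected})=O(n^{1-m})$ from the proof of Theorem~\ref{main-connectivity}, together with $\p(\exists\,j>n:\V j\text{ isolated})=O(n^{1-m})$; so $diam$ is eventually the genuine diameter.)

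First I would set up a first--moment bound. Let $N_\ell^{(n)}$ be the number of $v\in\gV n$ with $p_n(\V 1,v)\le\ell$; if $diam(GPM_n)\le\ell$ then $N_\ell^{(n)}=n$, and for fixed $\ell$ the map $n\mapsto N_\ell^{(n)}$ is non-decreasing because adding a vertex cannot increase any distance between existing vertices. Every $v$ counted by $N_\ell^{(n)}$ is the endpoint of a self-avoiding path of length $\le\ell$ from $\V 1$ (a geodesic), so $N_\ell^{(n)}\le 1+\#\{\text{self-avoiding paths of length in }[1,\ell]\text{ from }\V 1\}$. A path of length $t$ uses edges $e_1,\dots,e_t$, with $e_i$ joining $\V{a_i}$ and $\V{b_i}$, $a_i\le b_i$; since $\V{b_i}$ is the later endpoint, the path is an event of the form $\{v_{b_i,t_i}=\V{a_i},\ \forall i\}$ for distinct pairs $(b_i,t_i)$. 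Because the vertex positions are i.i.d.\ uniform on $S$, integrating them out one at a time along the path yields $\p(A)=p^{t}$ in Theorem~\ref{thm3.1}, and each vertex is the smaller endpoint of at most two path-edges. Hence, summing over the $\le m^{t}$ choices of the $t_i$, Theorem~\ref{thm3.1} gives
\[
\p(\text{the path is present})\ \lesssim\ \rho^{\,t}\prod_{i=1}^{t}b_i^{-\frac{1+\delta}{2+\delta}}\,a_i^{-\frac{1}{2+\delta}},\qquad \rho:=\frac{p\,\big((1+\delta)m+1\big)}{2+\delta},
\]
a bounded constant since $p$ is constant.

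Next I would carry out the sum over interior vertices. A monotone increasing run of $s$ edges starting from $\V 1$ contributes at most a constant times $\rho^{\,s}\,n^{1/(2+\delta)}\,(\log n)^{s-1}/(s-1)!$, the factorial coming from the ordering constraint $1<w_1<\dots<w_s$; a ``peak'' vertex (both path-neighbours younger) carries total exponent $2(1+\delta)/(2+\delta)>1$ and yields a convergent sum, while a ``valley'' vertex carries exponent $2/(2+\delta)<1$ and is compensated by its incident edge factors, both inequalities using $\delta>0$. Organising the computation as in \cite{Dommers2010}, for $C_1$ small enough (depending only on $m,\delta,p$) one gets $\e\big(N_{\lceil C_1\log n\rceil}^{(n)}\big)\le n^{1-\varepsilon_0}$ for some $\varepsilon_0=\varepsilon_0(C_1)>0$ and all large $n$. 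To pass to the almost sure statement, set $n_k=2^k$ and $\ell_k=\lceil C_1\log n_{k+1}\rceil$; then Markov's inequality gives $\p\big(N_{\ell_k}^{(n_{k+1})}\ge n_k\big)\le n_k^{-1}n_{k+1}^{1-\varepsilon_0}=O(2^{-\varepsilon_0 k})$, which is summable, so by Borel--Cantelli almost surely $N_{\ell_k}^{(n_{k+1})}<n_k$ for all large $k$. By monotonicity $N_{\ell_k}^{(n)}\le N_{\ell_k}^{(n_{k+1})}<n_k\le n$ for every $n\in[n_k,n_{k+1}]$, so the ball of radius $\ell_k$ about $\V 1$ in $GPM_n$ misses a vertex, hence $diam(GPM_n)>\ell_k\ge C_1\log n$; letting $n\to\infty$ gives $\liminf_n diam(GPM_n)/\log n\ge C_1$.

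The main obstacle is the path-counting estimate $\e\big(N_{\lceil C_1\log n\rceil}^{(n)}\big)\le n^{1-\varepsilon_0}$: a term-by-term bound loses a factor $\log n$ per edge, so one must extract the $1/(s-1)!$ gains from the monotone runs and control the interplay of peaks and valleys (this is where $\delta>0$ is essential). This is precisely the delicate part of the lower bound in \cite{Dommers2010}; it transfers here because Theorem~\ref{thm3.1} has already absorbed the geometric constraint into the harmless constant $\rho$.
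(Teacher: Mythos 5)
Your plan is the same as the paper's: a first-moment count of short paths whose per-edge probability is controlled by the martingale machinery behind Theorem~\ref{thm3.1}, with the combinatorial sum over interior indices delegated to Lemma~2.4 of \cite{Dommers2010}, followed by Markov's inequality. Your conversion to the almost-sure statement (monotonicity of the ball sizes in $n$ plus Borel--Cantelli along $n_k=2^k$) is in fact more explicit than what the paper writes.

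The one place where your plan is thinner than the paper's is exactly the step you flag as ``the main obstacle,'' and the issue is not only the factorial gains from monotone runs. Theorem~\ref{thm3.1} is proved for \emph{constant} $k$; when $k=\Theta(\log n)$ two of its error terms degrade if the path is allowed to visit small indices. First, each special time $t\in\{a_i,b_i\}$ contributes a multiplicative factor $1+O(t^{-c})$, which is only $\exp(O(M^{-c}k))$ when all indices exceed $M$. Second, and more seriously, the bad-event term in \eqref{eq:3.8} is $\sum_j o(\exp(-(pj)^c))\,((2+\delta)mj)^k\,\p(A)^{-1}X_{a'}$; for small $j$ the factor $\exp(-(pj)^c)$ is not small while $j^k\,p^{-k}$ is a (small but $C_1$-dependent) power of $n$, so this term is no longer negligible relative to the main term without further argument. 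The paper's device is to count only the final segment of each geodesic after its last visit to $\gV{M}$ with $M\ge\log(n)^3$, i.e.\ paths $x_0\le M<x_1,\dots,x_t$, which keeps all these errors at $\exp(O(k))$ and lets them be absorbed into the per-edge constant $C_\delta$; you instead anchor at $\V 1$ and allow arbitrary interior indices, asserting that Theorem~\ref{thm3.1} has ``absorbed'' everything into $\rho$. You should either adopt the paper's truncation at $\gV{M}$, or explicitly verify that for $C_1$ small (now depending on $p$ as well as $m,\delta$) the accumulated error and bad-event contributions from small indices stay below $n^{1-\varepsilon_0}$ after summing over paths. With that repair the argument goes through.
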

    \begin{proof}
The main idea of the proof is to estimate the number of "short paths" in $GPM_n$. If the diameter of the network is less than $C_1\log(n)$, then for each vertex there exists a path of length at most $C_1\log(n)$ connecting it to one of the first $M$ vertices. Therefore, we only need to prove that the expected number of such "short paths" is $o(n)$. We cannot use Theorem \ref{thm3.1} directly because $C_1\log(n)$ is not a constant. However, we can apply the same method here.

Let $x_0 \leq M$, $x_1, x_2, \dots, x_t \geq M$ be distinct integers, and define $a_i = \min\{x_{i-1}, x_i\}$, $b_i = \max\{x_{i-1}, x_i\}$. Consider the event $G_{n_1,n_2}$ in \eqref{eq:edgeprob-lnevent} and $X_i$ in \eqref{edgevariable}. Note that if we take $M \geq \log(n)^3$ and $t \geq M$, then $k = C_1\log(n) \leq O(t^{\frac{1}{3}})$, and \eqref{edgeincrease1} remains valid for $t \notin \cup_{i=1}^k\{x_i\}$. In addition, \eqref{edgeincrease2} holds for other values of $t$. Therefore, we obtain
    \begin{align*}
        \e(X_n\mathbf{1}_{G_{M,n}}\mid \mathcal{F}_M)= O(X_M\exp(CM^{-c}k)),
    \end{align*}
    where $C$ is a large constant. Since the network formed by all $(\V{x_{i-1}},\V{x_i})$ is a line and $X_i\geq M$ for $i\geq 1$, therefore $\p(A\mid \mathcal{F}_M)=p^{k}$ and 
    \begin{align*}
        &\p((x_{i-1},x_i)\in GPM_n, \ \forall \ 1\leq i\leq k,G_{M,n})\\
        \leq &\exp(CM^{-c}k)M\prod_{i=1}^{k}\frac{(1+\delta)m+1}{(2+\delta)m}\min\{x_{i-1},x_i)^{-\frac{1}{2+\delta}}\max\{x_{i-1},x_i)^{-\frac{1+\delta}{2+\delta}}.
    \end{align*}
     Denote $l_t:=\#\{x_0 \leq M,x_1,x_2\dots, x_t> M\mid  (\V{x_i},\V{x_{i+1}})\in GPM_n ,\ \forall\ 0\leq i \leq k-1\}$, then 
         \begin{align*}
             \e(l_k)\leq \exp(O(k))M\sum\limits_{1\leq x_0\leq M, M+1\leq x_i\leq n}\prod\limits_{i=0}^{k-1}\max(x_{i},x_{i+1})^{-\frac{1+\delta}{2+\delta}}\min(x_{i},x_{i+1})^{-\frac{1}{2+\delta}}.
         \end{align*}
         By Lemma 2.4 in \cite{Dommers2010}, there is a constant $C_\delta>0$ satisfying
         \begin{align*}
         \e(l_k)\leq C_{\delta}^{k}n^{\frac{1}{2}}.    
         \end{align*}
         then for small constant $C_1$ we have
         \begin{align*}
         \sum\limits_{t=1}^{C_1\log(n)}\e(l_t)\leq n^{\frac{3}{4}},
         \end{align*}
         which completes the proof of the lemma.
\end{proof}
Combine Lemma \ref{diamup} and Lemma \ref{diamdown} to finish the proof of Theorem \ref{main-diameter}.

\section*{Acknowledgments} The authors thank Jian Ding for introducing this problem and providing many insightful discussions. The authors thank Deka Prabhanka and Zhangsong Li for many useful suggestions that greatly improve this manuscript. Both authors are supported by the National Key R\&D program of China (No.2023YFA1010103). This work is supported by Beijing Natural Science Foundation (No.QY25084).

\bibliographystyle{amsplain}

\begin{thebibliography}{99}

\bibitem{Albert1999}
R{\'e}ka Albert, Hawoong Jeong, and Albert-L{\'a}szl{\'o} Barab{\'a}si.
\newblock Diameter of the world-wide web.
\newblock {\em Nature}, 401(6749):130--131, 1999.

\bibitem{BARABASI1999173}
Albert-László Barabási, Réka Albert, and Hawoong Jeong.
\newblock Mean-field theory for scale-free random networks.
\newblock {\em Physica A: Statistical Mechanics and its Applications}, 272(1):173--187, 1999.

\bibitem{Dommers2010}
Sander Dommers, Remco {van der Hofstad}, and Gerard Hooghiemstra.
\newblock Diameters in preferential attachment models.
\newblock {\em Journal of Statistical Physics}, 139(1):72--107, Apr 2010.

\bibitem{EGGEMANN2011953}
N.~Eggemann and S.D. Noble.
\newblock The clustering coefficient of a scale-free random graph.
\newblock {\em Discrete Applied Mathematics}, 159(10):953--965, 2011.

\bibitem{Faloutsos1999}
Michalis Faloutsos, Petros Faloutsos, and Christos Faloutsos.
\newblock On power-law relationships of the internet topology.
\newblock {\em SIGCOMM Comput. Commun. Rev.}, 29(4):251–262, August 1999.

\bibitem{Flaxman01012006}
Abraham~D. Flaxman, Alan~M. Frieze, and Juan~Vera and.
\newblock A geometric preferential attachment model of networks.
\newblock {\em Internet Mathematics}, 3(2):187--205, 2006.

\bibitem{Flaxman01012007}
Abraham~D. Flaxman, Alan~M. Frieze, and Juan~Vera and.
\newblock A geometric preferential attachment model of networks ii.
\newblock {\em Internet Mathematics}, 4(1):87--111, 2007.

\bibitem{Girvan2002}
Michelle Girvan and M.~E.~J. Newman.
\newblock Community structure in social and biological networks.
\newblock {\em Proceedings of the National Academy of Sciences}, 99(12):7821--7826, June 2002.

\bibitem{Jacob2015}
Emmanuel Jacob and Peter M{\"o}rters.
\newblock {Spatial preferential attachment networks: Power laws and clustering coefficients}.
\newblock {\em The Annals of Applied Probability}, 25(2):632 -- 662, 2015.

\bibitem{Jeong2000}
H.~Jeong, B.~Tombor, R.~Albert, Z.~N. Oltvai, and A.-L. Barabási.
\newblock The large-scale organization of metabolic networks.
\newblock {\em Nature}, 407(6804):651--654, October 2000.

\bibitem{Jordan2013}
Jonathan Jordan.
\newblock {Geometric preferential attachment in non-uniform metric spaces}.
\newblock {\em Electronic Journal of Probability}, 18(none):1 -- 15, 2013.

\bibitem{Watts1998}
D.~Watts and S.~Strogatz.
\newblock Collective dynamics of ‘small-world’ networks.
\newblock {\em Nature}, 393(6684):440--442, jun 1998.

\bibitem{Zuev2015}
Konstantin Zuev, Marián Boguñá, Ginestra Bianconi, and Dmitri Krioukov.
\newblock Emergence of soft communities from geometric preferential attachment.
\newblock {\em Scientific Reports}, 5(1):9421, apr 2015.

\end{thebibliography}
\small

\end{document}